\documentclass[english,theo]{manuart}
\pdfoutput=1
\pdfmapfile{+frcr.map}

\usepackage{macros_peyre}
\usepackage{notations_peyre}

\makeindex
\makeglossary

\title[Beyond heights]{Beyond heights: slopes\\ and distribution
  of rational points}
\author{Emmanuel Peyre}
\address{Institut Fourier\\
Universit\'e Grenoble Alpes et CNRS\\
CS 40700\\ 38058 Grenoble CEDEX 09\\ France}
\email{Emmanuel.Peyre@univ-grenoble-alpes.fr}
\date{\today}
\subjclass{Primary 11D45; secondary 11G50, 14G40}

\begin{document}

\begin{abstract}
The distribution of rational points of bounded height on algebraic varieties is far from uniform. Indeed the points tend to accumulate on thin subsets which are images of non-trivial finite morphisms. The problem is to find a way to characterise the points in these thin subsets. The slopes introduced by Jean-Benoît Bost are a useful tool for this problem. These notes will present several cases in which this approach is fruitful. We shall also describe the notion of locally accumulating subvarieties which arises when one considers rational points of bounded height near a fixed rational point.
\end{abstract}
\ifx\undefined\altabstract\else
\begin{altabstract}
La distribution des points rationnels de hauteur born\'ee sur les vari\'et\'es alg\'ebriques est loin d'\^etre uniforme les points peuvent s'accumuler sur l'image de vari\'et\'es formant un ensemble mince. La difficult\'e est de pouvoir caract\'eriser les points de ces ensembles accumulateurs. Les pentes de la g\'eom\'etrie d'Arakelov forment un outil utile pour attaquer cette probl\'ematique. Ces notes pr\'esenteront diff\'erents exemples où cette approche est efficace. On \'evoquera \'egalement la question des sous-vari\'et\'es localement accumulatrices qui apparaissent lorsqu'on considère les points de hauteur born\'ee au voisinage d'un point rationnel.
\end{altabstract}
\fi

\maketitle
\tableofcontents

\section{Introduction}
For varieties with infinitely many rational points, one may
equip the variety with a height and study asymptotically the finite set
of rational points with a bounded height. The study of many examples
shows that the distribution of rational points of bounded
height on algebraic varieties is far from uniform.
Indeed the points tend to accumulate on thin subsets which
are images of non-trivial finite morphisms. It is natural to
look for new invariants to characterise the points in
these thin subsets. First of all,
it is natural to consider all possible heights, instead of
one relative to a fixed line bundle. But the geometric analogue
described in section~\ref{section:geometricanalogue}
suggests to go beyond heights to find a property similar to being
very free for rational curves.
The slopes introduced by Jean-Benoît Bost
give the tool for such a construction. In section~\ref{section:slopes},
we describe the notion
of freeness which measures how free a rational point is.
This section will present several cases in which
this approach is fruitful. In section~\ref{section:local},
we also describe its use in
connection with the notion of locally accumulating subvarieties
which arises when one considers
rational points of bounded height near a fixed rational point.

The author thanks D.~Loughran for a discussion which led to a crucial
improvement of this paper.
\section{Norms and heights}
\subsection{Adelic metric}
In this chapter, I am going to use heights defined by an adelic metric,
which I use in a more restrictive sense than in the rest of the summer
school. In fact, an adelic metric will be an analog of the notion of
Riemannian metric in the adelic setting. Let me fix some notation for
the remaining of these notes.
\begin{notas}
  The letter $\KK$ denotes a number field. The set of places
  of~$\KK$ is denoted by $\Valde\KK$. Let $w$ be a place of $\KK$.
  We denote by $\KK_w$ the completion of~$\KK$ at~$w$. For an ultrametric
  place, $\mathcal O_w$ is the ring of integers of $\KK_w$
  and $\mathfrak m_w$ its maximal ideal.
  Let~$v\in\Val(\QQ)$
  denote the restriction of~$w$ to~$\QQ$. We consider
  the map $|\cdot|_w:\KK_w\to\RRp$ defined by
  \[|x|_w=|N_{\KK_w/\QQ_v}(x)|_v\]
  for~$x\in\KK_w$, where $N_{\KK_w/\QQ_v}$ denotes the norm map.
  The Haar measure on the locally compact field $\KK_w$
  is normalized as follows:
  \begin{assertions}
  \item
    $\int_{\mathcal O_w}\Haar{x_w}=1$ for a non-archimedean place~$w$;
  \item
    $\Haar{x_w}$ is the usual Lebesgue measure if~$w$ is real;
  \item
    $\Haar{x_w}=2\Haar x\Haar y$ for a complex place.
  \end{assertions}
\end{notas}
\begin{rema}
  The map~$|\cdot|_w$ is an absolute value if~$w$ is ultrametric
  or real, it is the square of the modulus for a complex place.
  This choice of notation is motivated by the fact that $|\lambda|_w$
  is the multiplier of the Haar measure for the change of variables
  $y=\lambda x$:
  \[\Haar{y_w}=|\lambda|_w\Haar{x_w}\]
  and we have the product formula:
  \[\prod_{w\in\Val(\KK)}|x|_w=1\]
  for any $x\in\KK^*$.
\end{rema}
\begin{term}
  We shall say that a variety $V$ is \emph{nice}\index{Nice variety}
  if it is smooth, projective, and geometrically integral.
\end{term}
\begin{nota}
  Let $X$ be a variety over $\KK$.
  For any commutative $\KK$-algebra~$A$, we denote by $X_A$
  the product $X\times_{\Spec(\KK)}\Spec(A)$ and by $X(A)$
  the set of $A$-points which is defined as
  $\Mor_{\Spec(\KK)}(\Spec(A),X)$.
  \par
  For the rest of this chapter,
  we denote by~$V$ a nice variety on the number field~$\KK$.
  The Picard group of~$V$, denoted by
  $\Pic(V)$, is thought as the set of isomorphism classes
  of line bundles on~$V$.
\end{nota}
\begin{defi}
  Let $\pi:E\to V$ be a vector bundle on~$V$.
  For any extension~$\LL$ of~$\KK$ and any $\LL$-point~$P$ of~$V$,
  we denote by $E_P\subset E(\LL)$ the $\LL$-vector space
  corresponding to the fiber $\pi^{-1}(P)$ of~$\pi$ at~$P$.
  In this text,
  a classical adelic norm on~$E$ is a family
  $(\Vert\cdot\Vert_w)_{w\in\Val(\KK)}$ of continuous maps
  \[\Vert\cdot\Vert_w:E(\KK_w)\to\RRp\]
  such that:
  \begin{conditions}
  \item
    If $w$ is non-archimedean, for any $P\in V(\KK_w)$,
    the restriction ${\Vert\cdot\Vert_w}_{|E_P}$
    is an ultrametric norm with values in $\im(|\cdot|_w)$;
  \item
    If $\KK_w$ is isomorphic to $\RR$, then, for any $P$ in $V(\KK_w)$,
    the restriction ${\Vert\cdot\Vert_w}_{|E_P}$ is a euclidean norm;
  \item
    If $\KK_w$ is isomorphic to~$\CC$, then, for any~$P$ in $V(\KK_w)$,
    there exists a positive definite hermitian form $\phi_P$ on $E_P$ such that
    \[\forall y\in E_P,\quad\Vert y\Vert_w=\phi_P(y,y);\]
  \item
    There exists a finite set of places $S\subset\Val(\KK)$ containing
    the set of archimedean places and
    a model $\mathcal E\to\mathcal V$ of $E\to V$ over $\mathcal O_S$
    such that for any place $w$ in $\Val(\KK)\setminus S$
    and any $P\in\mathcal V(\mathcal O_w)$
    \[\mathcal E_P=\{\,y\in E_P\mid\,\Vert y\Vert_w\leq 1\,\},\]
    where $\mathcal E_P$ denotes the $\mathcal O_w$-submodule
    of $E_P$ defined by $\mathcal E$.
  \end{conditions}
  \par
  In the rest of this chapter, we shall say adelic norm for classical
  adelic norm. An \emph{adelically normed vector bundle}%
  \index{Adelically normed} is a vector bundle equipped with an
  adelic norm.
  We call \emph{adelic metric}\index{Adelic metric} an adelic norm on
  the tangent bundle $TV$.
\end{defi}
The point of using this type of norms is that you can
do all the usual constructions:
\begin{listexams}
  \example
  If $E$ and $F$ are vector bundles equipped with classical adelic norms,
  then we can define adelic norms on the dual $E\dual$,
  the direct sum $E\oplus F$ and
  the tensor product $E\otimes F$.
  \example
  If $E$ is a vector bundle equipped with a classical norm, then
  we define a classical norm on the exterior product $\exterieur^mE$
  in the following manner. Let $P\in V(\KK_w)$.
  If $w$ is an ultrametric space, then
  let
  \[\mathcal E_P=\{\,y\in E_P\mid\,\Vert y\Vert_w\leq 1\,\}.\]
  The set $\mathcal E_P$ is a $\mathcal O_w$-submodule
  of $E_P$ of maximal rank. Then we take on $\exterieur^mE_P$
  the norm defined by the module $\exterieur^m\mathcal E_P$.
  In the archimedean case, we choose the norm on $\exterieur^mE_P$
  so that if $(e_1,\dots,e_r)$ is an orthonormal basis of $E_P$ then
  the family
  $(e_{k_1}\wedge e_{k_2}\wedge\dots\wedge e_{k_m})_{1\leq k_1<k2<\dots<k_m\leq r}$
  is an orthonormal basis of $\exterieur^mE_P$.
  \example
  We can define pull-backs for morphisms of nice varieties over $\KK$.
  \example
  If $V=\Spec(\KK)$, then we may consider a vector bundle on $V$
  as a $\KK$-vector space. Let $E$ be a $\KK$ vector space
  of dimension~$r$ equipped
  with an adelic norm $(\Vert\cdot\Vert)_{w\in\Val(\KK)}$.
  Then
  \[\mathcal E=\{\,y\in E\mid\forall w\in\Val(\KK)_f,\Vert y\Vert_w
  \leq 1\,\}\]
  is a projective $\mathcal O_\KK$ module of constant rank $r$.
  \par
  If $r=1$, by the product formula, the product
  \[\prod_{w\in\Val(\KK)}\Vert y\Vert_w\]
  is constant for $y\in E\setminus\{0\}$. So we can define
  \[\dega(E)=-\sum_{w\in\Val(\KK)}\log(\Vert y\Vert_w).\]
  Let $\widehat{\Pic}(\Spec(\KK))$ be the set of isomorphism classes of line
  bundles with an adelic norm on $\Spec(\KK)$. Let $r_1$
  be the number of real places and $r_2$ the number of complex places.
  Let $\Val(\KK)_\infty\subset\Val(\KK)$ be the set of archimedean places.
  Let $H\subset\RR^{\Val(\KK)_\infty}$ be the hyperplane given by the equation
  $\sum_{w\in\Val(\KK)_\infty}X_w=0$. Then the map
  \[x\mapsto (\log(|x|_w))_{w\in\Val(\KK)_\infty}\]
  induces a map from $\mathcal O_K^*$ to $H$.
  Let $\TT$ be the quotient of~$H$ by the image of this map.
  The group $\TT$ is a compact torus of dimension $r_1+r_2-1$
  and we get an exact sequence
  \[0\longrightarrow \TT\longrightarrow\widehat{\Pic}(\Spec(\KK))
  \longrightarrow\Pic(\Spec(\mathcal O_\KK))\times\RR
  \longrightarrow 0\]
  where we map a line bundle~$E$ equipped with an adelic norm
  $(\Vert\cdot\Vert_w)_{w\in\Val(\KK)}$ to the pair
  $([\mathcal E],\dega(E))$ where $[\mathcal E]$ is the class
  of $\mathcal E$ in the ideal class group of $\mathcal O_\KK$.
  \par
  For arbitrary rank~$r$, we may define:
  \[\dega(E)=\dega(\exterieur^r(E)).\]
\end{listexams}

\Subsection{Arakelov heights}
\begin{defi}
  \label{defi:arakelovheight}
  For any vector bundle~$E$ over~$V$ equipped with an adelic norm,
  the corresponding \emph{logarithmic height}%
  \index{Height>logarithmic=(Logarithmic ---)} is defined as
  the map $h_E:V(\KK)\to\RR$ given by $P\mapsto\dega(E_P)$.
  where $E_P$ is the pull-back of $E$ by the map $P:\Spec(\KK)\to V$.
  The corresponding \emph{exponential height}%
  \index{Height>exponential=(Exponential ---)}
  is defined by $H_E=\exp\circ h_E$.
\end{defi}
\begin{rema}
  If $r=\rk(E)$, we have that $h_E=h_{\exterieur^rE}=h_{\det(E)}$.
  Therefore we do not get more than the heights defined by line bundles.
\end{rema}
\begin{exam}\label{exam:projective}
  For any $w\in\Val(\KK)$, we may consider the map
  $\Vert\cdot\Vert_w:\KK_w^{N+1}\to\RR$ defined by
  \[\Vert(y_0,\dots,y_N)\Vert_w=\max_{0\leq i\leq N}|y_i|_w.\]
  This does not define a classical norm on $\KK_w^{N+1}$ in the sense above,
  however it defines a norm on the tautological line bundle as follows.
  Let $w\in \Val(\KK)$. The fibre
  of the tautological $\mathcal O_{\PP^N_\KK}(-1)$
  over a point $P\in\PP^N(\KK_w)$ may be identified
  with the line corresponding to the point. By restricting
  ${\Vert\cdot\Vert_w}$ to these lines, we obtain an adelic norm
  ${(\Vert\cdot\Vert_w)_{w\in\Val(\KK)}}$ on $\mathcal O_{\PP^N_\KK}(-1)$
  and by duality on $\mathcal O_{\PP^N_\KK}(1)$.
  If $(y_0,\dots,y_N)\in\KK^{N+1}\setminus\{0\}$,
  let $P$, also denoted by $[y_0:\dots:y_N]$,
  be the corresponding point in $\PP^N(\KK)$. Then
  $y=(y_0,\dots,y_n)\in\mathcal O(-1)_P$ and we get
  the formula
  \[H_{\mathcal O(-1)}(P)=\prod_{w\in\Val(K)}\Vert y\Vert_w^{-1}.\]
  Thus $H_{\mathcal O(1)}(P)=\prod_{w\in\Val(\KK)}\Vert y\Vert_w$. In
  the case where $\KK=\QQ$ and $y_0,\dots,y_N$ are coprime integers,
  we have $\Vert(y_0,\dots,y_N)\Vert_v=1$ for any finite place~$v$
  and the height may be written as
  \[H_{\mathcal O(1)}(P)=\max_{0\leq i\leq N}|y_i|\]
  which is one of the na\"ive heights for the projective space.
\end{exam}
\begin{nota}
  For any function $H:V(\KK)\to\RR$, any subset $W\subset V(\KK)$ and any
  positive real number $B$, we consider the set
  \[W_{H\leq B}=\{\,P\in V(\KK)|H(P)\leq B\,\}.\]
\end{nota}
Our aim is to study such sets for heights~$H$ as~$B$ goes to infinity.
Let us motivate this study with a few pictures of such sets.
\begin{exams}
  Figure~\ref{figure.plane} represents rational points of bounded height
  in the projective plane. More precisely this drawing represents
  \[\{\,(x,y)\in\QQ^2\mid H_{\mathcal O(1)}(x:y:1)<40,|x|\leq 1\text{ and }
  |y|\leq 1\,\}.\]
  Figure~\ref{figure.product} represents rational points of bounded height
  in the one-sheeted hyperboloid defined by the equation
  $xy=zt$ in $\PP^3_\QQ$:
  \[\{\,P=(x,y)\in \QQ^2\mid
  H_{\mathcal O(1)}(xy:1:x:y)\leq 50, |x|\leq 1\text{ and }|y|\leq 1\,\}.\]
  This quadric is the image of the Segre embedding
  \[([u_1:v_1],[u_2:v_2])\longmapsto[u_1u_2:v_1v_2:u_1v_2:v_1u_2]\]
  and therefore isomorphic to the product $\PP^1_\QQ\times\PP^1_\QQ$.
  \twofigures{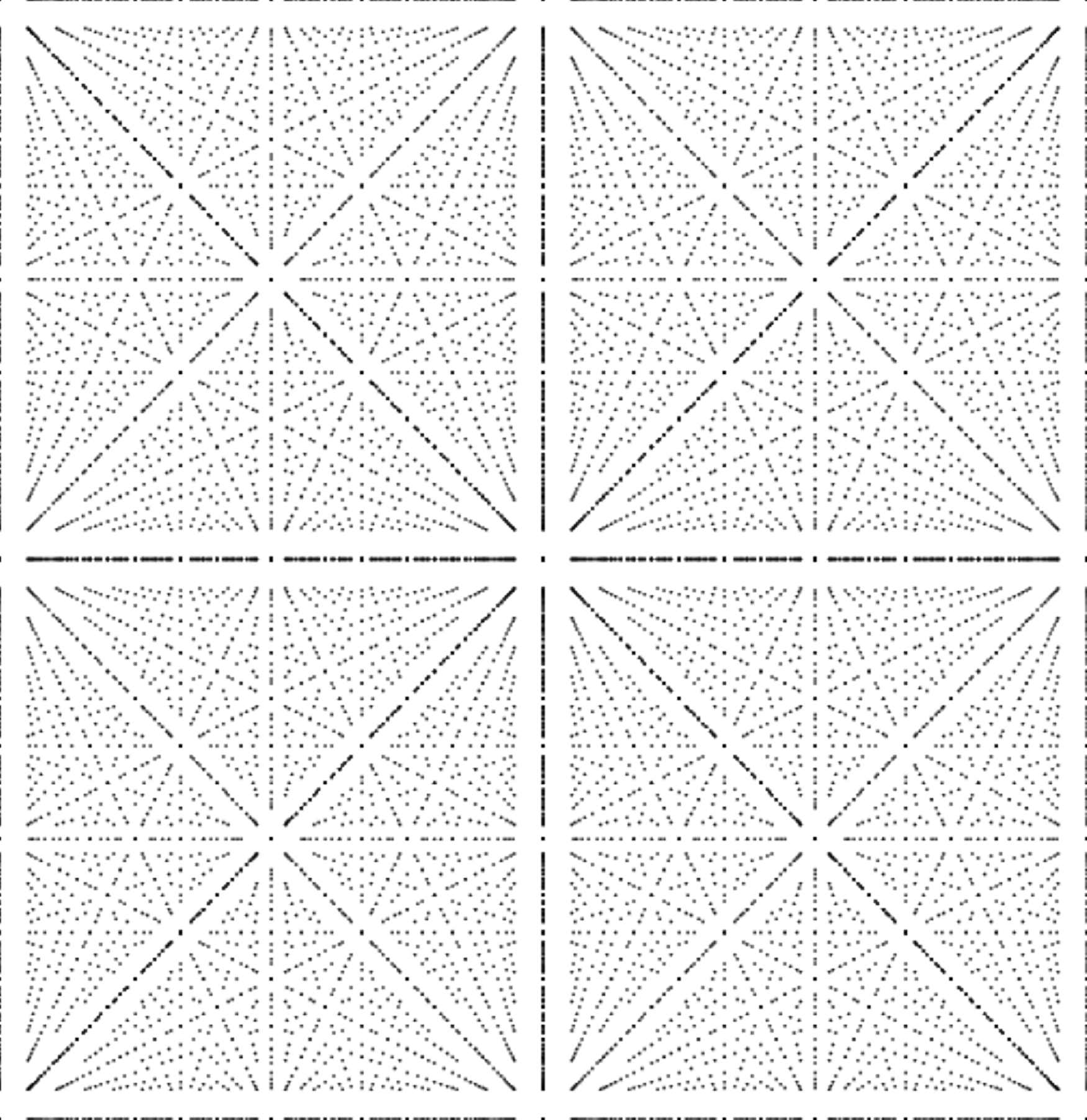}{Projective plane}{figure.plane}%
             {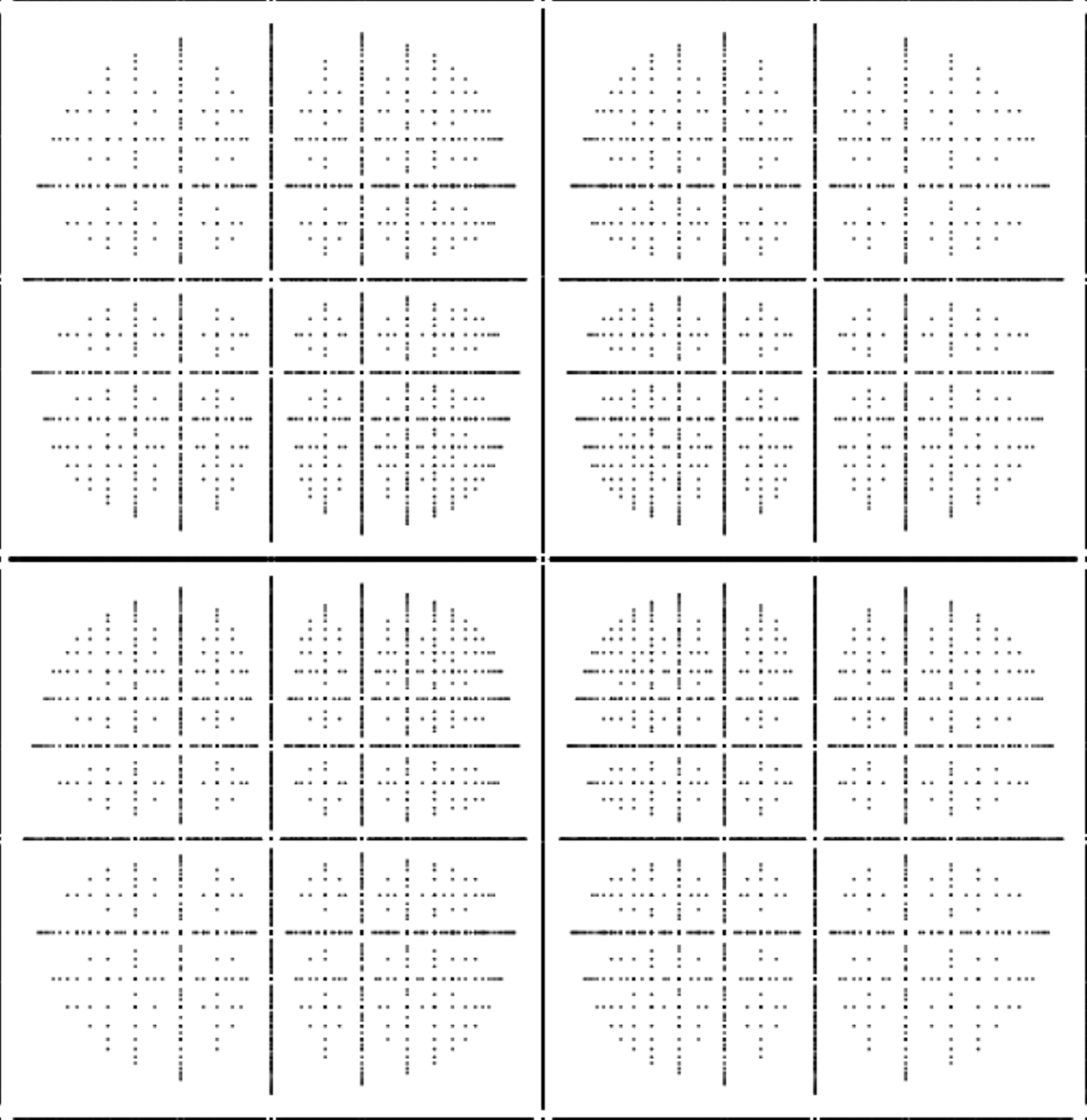}{Hyperboloid}{figure.product}
  The last picture represents rational points of bounded height
  on the sphere:
  \[\{\,P=[x:y:z:t]\in\PP^3(\QQ)|H(P)\leq B\text{ and }x^2+y^2+z^2=t^2\,\}.\]
  \begin{figure}[ht]
    \centering
    \ifcolorcover
    \includegraphics[width=3cm]{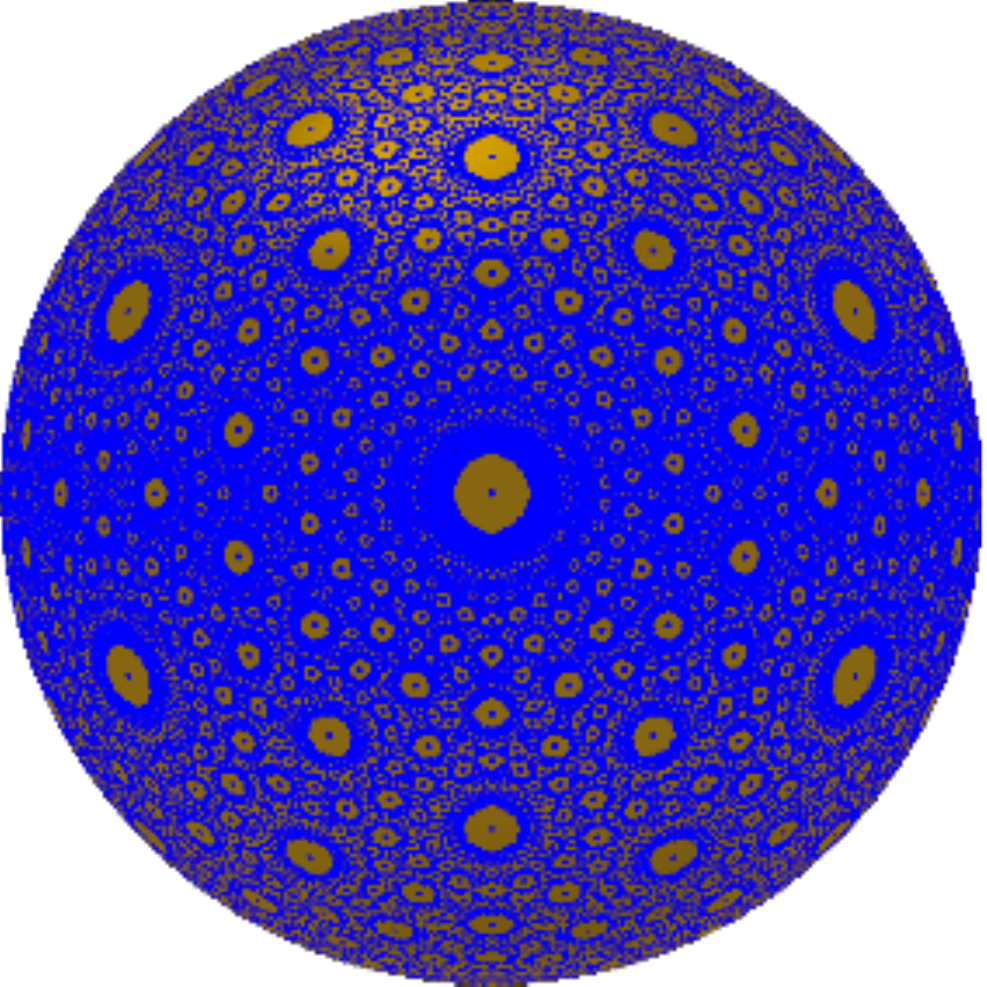}
    \else
    \includegraphics[width=3cm]{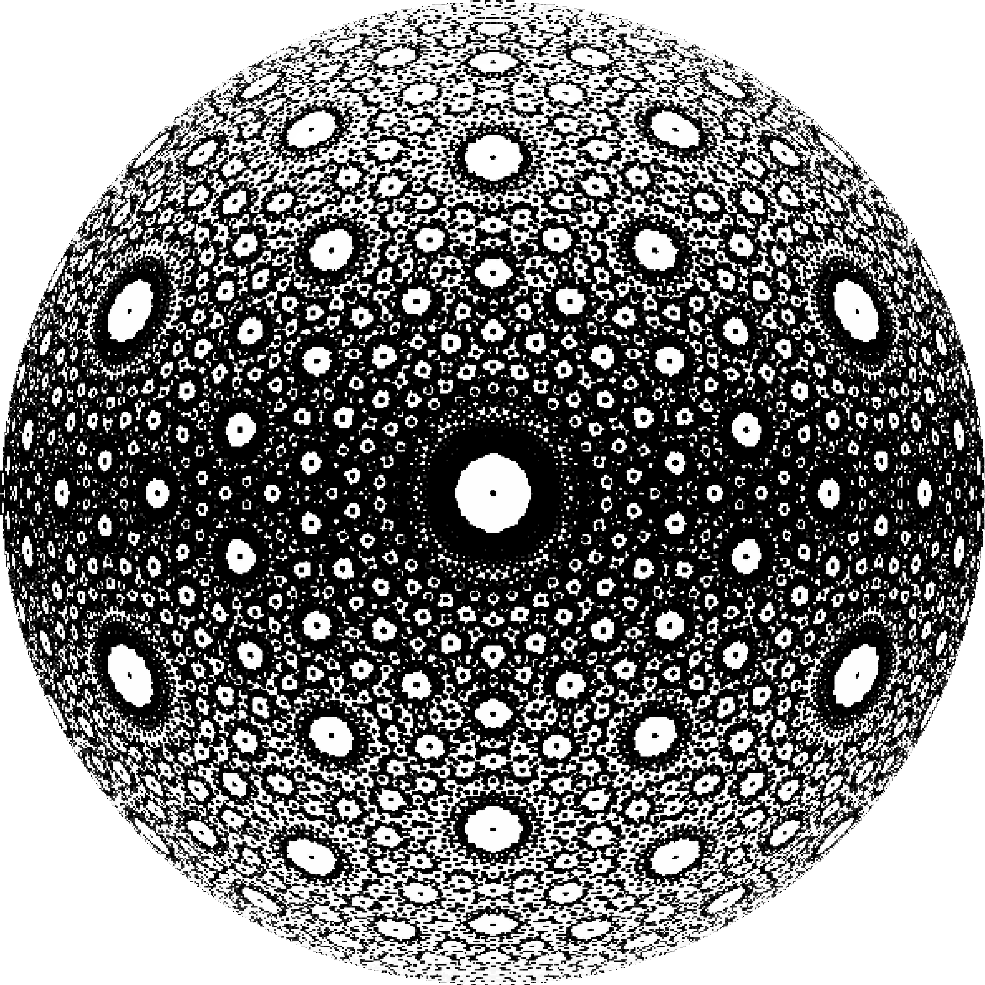}
    \fi
    \caption{The sphere}
    \label{figure.sphere}
  \end{figure}
\end{exams}
\begin{prop}
  If~$L$ is a big line bundle then there exists
  a dense open subset $U\subset V$
  for Zariski topology such that
  for any $B\in\RRp$, the set $U(\KK)_{H\leq B}$ is finite.
\end{prop}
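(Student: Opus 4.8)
The plan is to reduce the statement, via the structure of big line bundles, to Northcott's finiteness theorem for the naïve height on projective space. First I would invoke Kodaira's lemma: since $L$ is big, there is an integer $n\geq 1$, an ample line bundle $A$ on $V$ and an effective divisor $D$ on $V$ such that
\[L^{\otimes n}\simeq A\otimes\mathcal O(D).\]
Equip $L^{\otimes n}$ with the $n$-th tensor power of the chosen adelic norm on $L$ (so that $h_{L^{\otimes n}}=n\,h_L$ exactly), and equip $A$ and $\mathcal O(D)$ with arbitrary adelic norms; by the construction of heights for tensor products and the fact that two adelic norms on the same line bundle give heights differing by a bounded function, one gets
\[n\,h_L=h_A+h_{\mathcal O(D)}+O(1)\]
on $V(\KK)$. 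I would then set $U=V\setminus\operatorname{Supp}(D)$; this is a dense Zariski-open subset because $V$ is geometrically integral and $\operatorname{Supp}(D)$ is a proper closed subset (if $D=0$, simply take $U=V$).

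The key lemma is a uniform lower bound for the height attached to an effective divisor away from its support: there is a constant $C$ such that $h_{\mathcal O(D)}(P)\geq -C$ for every $P\in U(\KK)$. To prove it, let $s$ be the canonical global section of $\mathcal O(D)$ with $\operatorname{div}(s)=D$, so that $s(P)\neq 0$ for $P\in U(\KK)$ and $h_{\mathcal O(D)}(P)=-\sum_{w}\log\Vert s(P)\Vert_w$. For all but finitely many places $w$, the integrality condition~(iv) in the definition of an adelic norm (after enlarging the finite set $S$ so that $s$ extends to the model) forces $s(P)\in\mathcal E_P$, hence $\Vert s(P)\Vert_w\leq 1$; for each of the finitely many remaining places, $P\mapsto\Vert s(P)\Vert_w$ is a continuous function on the compact space $V(\KK_w)$ (compact because $V$ is projective), hence bounded above. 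Summing gives $\sum_w\log\Vert s(P)\Vert_w\leq C$ with $C$ independent of $P\in U(\KK)$, which is the claim.

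Combining, for $P\in U(\KK)$ with $H(P)=H_L(P)\leq B$ we get
\[h_A(P)=n\,h_L(P)-h_{\mathcal O(D)}(P)+O(1)\leq n\log B+C+O(1),\]
so $h_A$ is bounded on $U(\KK)_{H\leq B}$. Finally, choose $m\geq 1$ with $A^{\otimes m}$ very ample, giving a closed immersion $\iota\colon V\hookrightarrow\PP^N_\KK$ with $\iota^*\mathcal O_{\PP^N}(1)\simeq A^{\otimes m}$; then $h_{A^{\otimes m}}=h_{\mathcal O(1)}\circ\iota+O(1)$ by functoriality of heights, so the points of $U(\KK)_{H\leq B}$ map to a set of $\KK$-points of $\PP^N$ of bounded naïve height, which is finite by Northcott's theorem (the naïve height being the one computed in Example~\ref{exam:projective}); since $\iota$ is injective, $U(\KK)_{H\leq B}$ is finite. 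The main obstacle is the key lemma — more precisely, ensuring that the lower bound for $h_{\mathcal O(D)}$ is genuinely \emph{uniform} in $P$, which is exactly where one uses both the integrality property~(iv) at almost all places and the compactness of $V(\KK_w)$ at the finitely many bad places; the rest is bookkeeping with the $O(1)$'s and a citation of Northcott.
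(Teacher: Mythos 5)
Your proof is correct and follows essentially the same route as the paper's: both decompose the big line bundle into an ample part plus an effective part, remove the locus where the effective sections vanish, use the integrality condition at almost all places together with compactness of $V(\KK_w)$ at the remaining ones to get a uniform bound, and conclude by Northcott. The only difference is bookkeeping — you bound $h_{\mathcal O(D)}$ from below on $U(\KK)$ and then pass to $h_A$, whereas the paper directly bounds the ratio $H(\varphi(x))/H(x)=\prod_w\max_i\Vert s_i(x)\Vert_w$ for the map $\varphi$ given by the sections of $L$ — and both versions of this uniform estimate rest on exactly the same two ingredients you identify.
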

\begin{proof}
  It is enough to prove the result for a multiple of~$L$.
  Thus we may assume that we can write~$L$ as $E+A$
  where~$E$ is effective and~$A$ very ample.
  Taking~$U$ as the complement
  of the base locus of~$E$,
  and choosing a basis $(s_0,\dots,s_N)$ of $\Gamma(V,L)$,
  we get an embedding
  \[U\longrightarrow \PP^N_\KK.\]
  Using the height of example~\ref{exam:projective} on $\PP^N_\KK$,
  we get that
  \[\frac{H(\varphi(x))}{H(x)}=\prod_{w\in\Val(\KK)}\max_{0\leq i\leq N}
  \Vert s_i(x)\Vert_w.\]
  Thus there exists a constant $C\in\RRpp$ such that
  \[\forall x\in V(\KK),\quad H(\varphi(x))\leq CH(x).\]
  Using Northcott theorem, the set of points of bounded height
  in the projective space is finite. A fortiori, the set $U(\KK)_{H\leq B}$
  is finite.
\end{proof}
The height depends on the metric, but in a bounded way:
\begin{prop}
  Let~$H$ and $H'$ be heights defined by adelic norms on a line bundle~$L$
  then the quotient $H/H'$ is bounded: there exist real constants $0<C<C'$
  such that
  \[\forall P\in V(\KK),\quad C\leq\frac{H'(P)}{H(P)}<C'.\]
\end{prop}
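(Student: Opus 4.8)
The strategy is to reduce the global comparison to a finite collection of local ones. Both $H$ and $H'$ come from adelic norms $(\Vert\cdot\Vert_w)$ and $(\Vert\cdot\Vert'_w)$ on the same line bundle $L$. By definition of the exponential height, for $P\in V(\KK)$ choosing a generator $y$ of $L_P$ we have
\[
\frac{H'(P)}{H(P)}=\prod_{w\in\Val(\KK)}\frac{\Vert y\Vert_w}{\Vert y\Vert'_w},
\]
and this ratio is independent of the choice of $y$ by the product formula. So it suffices to bound each local factor $\Vert\cdot\Vert_w/\Vert\cdot\Vert'_w$ above and below by constants $c_w,c'_w$ with $c_w=c'_w=1$ for all but finitely many $w$, since then $C=\prod_w c_w$ and $C'=\prod_w c'_w$ are finite positive constants that work uniformly.

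First I would handle the cofinite set of places. Each adelic norm comes with a finite set of places $S$ (resp.\ $S'$) and an integral model $\mathcal L\to\mathcal V$ over $\mathcal O_S$ (resp.\ $\mathcal L'\to\mathcal V'$) as in condition~(4) of the definition of classical adelic norm. After enlarging $S\cup S'$ to a finite set $S_0$ over which the two models become isomorphic — possible since $\mathcal L$ and $\mathcal L'$ are models of the same line bundle $L$ and a generic isomorphism spreads out over some open of $\Spec\mathcal O_{S_0}$ — and using that $V(\KK)\subset\mathcal V(\mathcal O_w)$ for $w\notin S_0$ by the valuative criterion of properness, condition~(4) forces $\Vert\cdot\Vert_w=\Vert\cdot\Vert'_w$ on $L_P$ for every $P\in V(\KK)$ and every $w\notin S_0$. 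Hence the local factor is exactly $1$ outside $S_0$.

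It remains to treat the finitely many places $w\in S_0$. For a fixed such $w$, the two norms restrict on each fibre to genuine norms on a $1$-dimensional $\KK_w$-vector space, so their ratio is a single positive continuous function on $\PP(L)(\KK_w)\cong V(\KK_w)$; the point is to make this uniform in the base point. Here I would invoke continuity and compactness: $V(\KK_w)$ is compact (as $V$ is projective and $\KK_w$ locally compact), and the function $P\mapsto \Vert s(P)\Vert_w/\Vert s(P)\Vert'_w$ is continuous and strictly positive — locally one trivializes $L$ by a nowhere-vanishing section $s$ and uses continuity of both $\Vert\cdot\Vert_w$ and $\Vert\cdot\Vert'_w$ from the definition, noting the ratio does not depend on the local trivialization. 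A continuous positive function on a compact space attains positive minimum and finite maximum, giving constants $0<c_w\le c'_w<\infty$; taking the product over $w\in S_0$ yields the claim.

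The only genuinely delicate point is the spreading-out argument in the second paragraph: one must check that enlarging the finite bad set $S_0$ really does reconcile the two integral models, i.e.\ that the models agree over $\mathcal O_w$ for almost all $w$. This is standard but is where all the real work sits; the archimedean and finite-bad-place estimates are then a soft compactness argument.
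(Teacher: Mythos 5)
Your proof is correct and follows essentially the same route as the paper's: the paper's own (very terse) argument is exactly that the ratio of the two norms is a continuous positive function on the compact space $V(\KK_w)$, hence bounded above and below, and that the adelic condition forces the norms to coincide outside a finite set of places. You have simply filled in the details (the spreading-out of the isomorphism of models and the reduction of the local ratio to a function on $V(\KK_w)$) that the paper leaves implicit.
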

\begin{proof}
  The quotient of the norms $\frac{\Vert\cdot\Vert'_w}{\Vert\cdot\Vert_w}$
  induces a continuous map from the compact set $V(\KK_w)$ to $\RRpp$. Thus
  it is bounded from below and above. Moreover the adelic condition
  imposes that the norms coincide for all places outside a finite set.
\end{proof}
\section{Accumulation and equidistribution}
In these notes, I shall first consider the distribution of
rational points
of bounded height on the variety.
\subsection{Sandbox example: the projective space}
First, I have to explain what I mean by distribution.
Let us for example consider the picture in figure~\ref{figure.subset}.
\ifcolorcover
\smallfigure{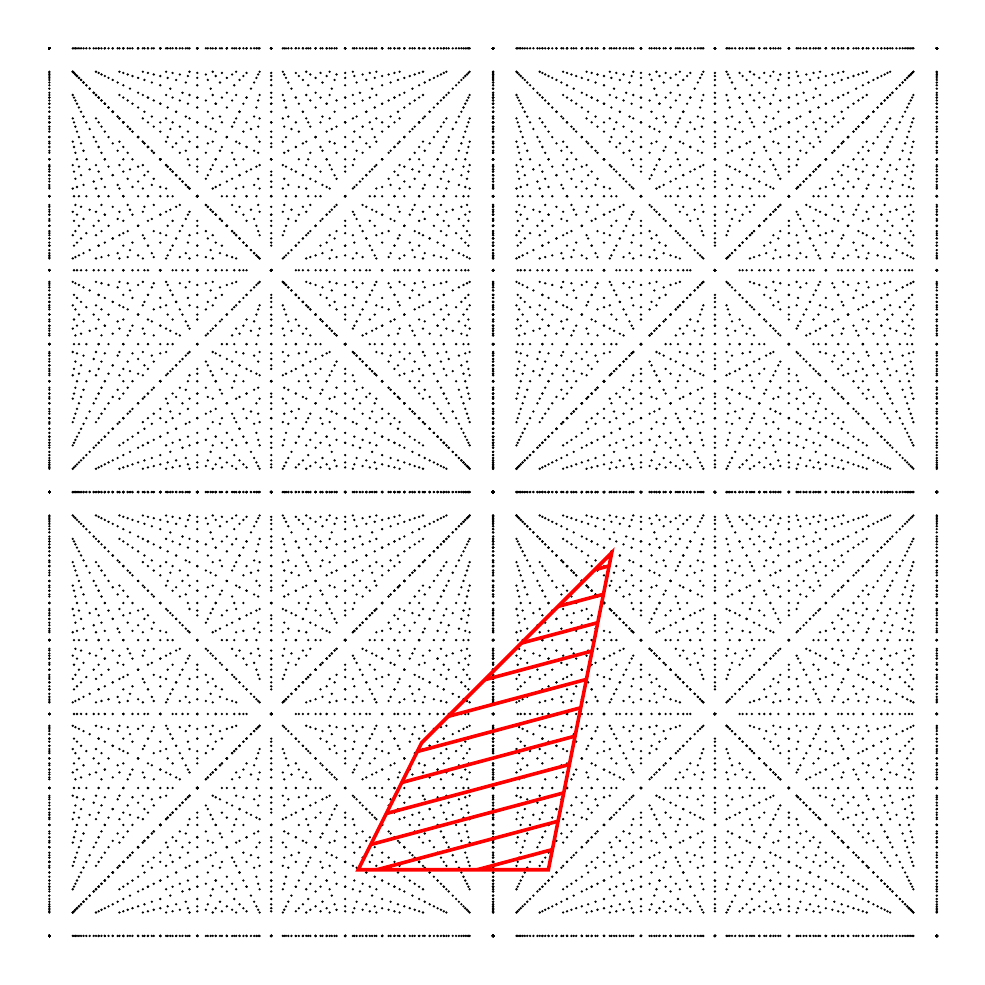}{Open subset}{figure.subset}
\else
\smallfigure{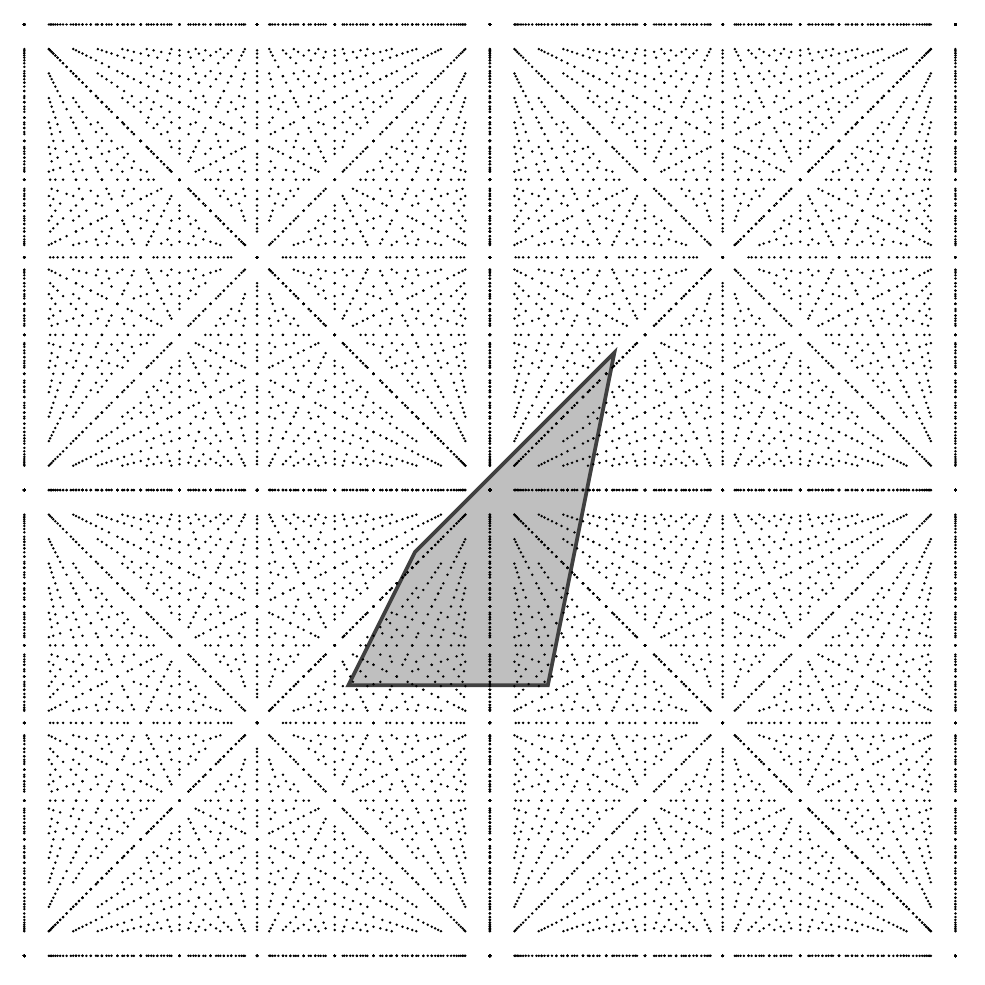}{Open subset}{figure.subset}
\fi
We have selected a ``simple'' open subset~$W$ in $\PP^n(\RR)$, which is
drawn \ifcolorcover with hatching\else in grey\fi.
We may then study asymptotically the proportion
of rational points of bounded height in this open set. More precisely,
one may formulate the following question:
\begin{question}%
  \label{question.distribution.infinite}
  Does the quotient
  \[\frac{\card (W\cap \PP^n(\QQ))_{H\leq B}}{\card \PP^n(\QQ)_{H\leq B}}.\]
  have a limit as $B$ goes to $+\infty$ and how can we interpret its value?
\end{question}

Similarly, let us fix some integer $M>0$ and consider the reduction
modulo $M$ of the points. More precisely, let $A$ be a ring.
The set of $A$-points of the projective space, denoted by $\PP^n(A)$,
is the set of morphisms from $\Spec(A)$ to $\PP^n_\ZZ$. This
defines a covariant functor from the category of rings to the category
of sets.
A $(n+1)$-tuple $(a_0,\dots,a_n)$ in $A^{n+1}$ is said to be primitive
if the generated ideal $(a_0,\dots,a_n)$ is $A$ itself;
this is equivalent to the existence of $(u_0,\dots,u_n)\in A^{n+1}$
such that $\sum_{i=0}^nu_ia_i=1$.
The group of invertible elements acts by multiplication on the
set of primitive elements in $A^{n+1}$.
Then the $\ZZ/M\ZZ$ points of the projective space $\PP_\ZZ^n$ may
be described as the orbits for the action of $\ZZ/M\ZZ^*$ on
the set of primitive elements in $(\ZZ/M\ZZ)^{n+1}$. For any point~$P$
in $\PP^n(\QQ)$, we may choose homogeneous coordinates $[y_0:\dots:y_n]$
so that $y_0,\dots,y_n$ are coprime integers. The reduction modulo $M$
of $P$, is the point of $\PP^n(\ZZ/M\ZZ)$ defined by the primitive
element $(\overline{y_0},\dots,\overline{y_n})$, where $\overline y$
denotes the reduction modulo $M$ of the integer $y$.
This define a map
\[r_M:\PP^n(\QQ)\longrightarrow\PP^n(\ZZ/M\ZZ).\]
This description of the reduction map
generalises easily to any quotient of a principal ring.
Then for any subset $W$ of $\PP^n(\ZZ/M\ZZ)$, we may consider the question
\begin{question}%
  \label{question.distribution.p}
  Does the quotient
  \[\frac{\card (r_M^{-1}(W))_{H\leq B}}{\card \PP^N(\QQ)_{H\leq B}}\]
  converges as~$B$ goes to infinity?
\end{question}

With the adelic point of view, we can see
questions~\ref{question.distribution.infinite}
and~\ref{question.distribution.p} as particular cases
of the following more general question:
\begin{question}%
  \label{question.distribution.adelic}
  Let $\KK$ be a number field.
  Let $\PP^N(\Adeles_\KK)=\prod_{w\in\Val(\KK)}\PP^n(\KK_w)$ be the
  adelic projective space and let $f:\PP^N(\Adeles_\KK)\to\RR$ be a continuous
  function. Does the quotient
  \[S_B(f)=\frac 1{\card\PP^n(\KK)_{H\leq B}}\sum_{P\in\PP^n(\KK)_{H\leq B}}f(P)\]
  have a limit as $B$ goes to infinity?
\end{question}
The answer is postive and we shall state it as a proposition:
\begin{prop}%
  \label{prop:distribution.projective}
  With the notations introduced in question~\ref{question.distribution.adelic},
  \[\CDat S_B(f)@>>B\to+\infty>\int_{\PP^n_\KK(\Adeles_\KK)}f\mmu_{\PP^n}\]
  where $\mmu_{\PP^n}$ is the probability measure defined
  as the product $\prod_{w\in\Val(\KK)}\mmu_w$ where $\mmu_w$ is the
  borelian probability measure on $\PP^n(\KK_w)$ defined by:
  \begin{itemize}
  \item
    If $w$ is a non-archimedean place, let
    $\pi_k:\PP^n(\KK_w)\to\PP^n(\mathcal O_w/\mathfrak m_w^k)$
    be the reduction modulo $\mathfrak m_w^k$ then we equip $\PP^n(\KK_w)$
    with the natural probability measure:
    \[\mmu_w(\pi_k^{-1}(W))=\frac{\card W}{\card
      \PP^n(\mathcal O_w/\mathfrak m_w^k)}\]
    for any subset $W$ of $\PP^n(\mathcal O_w/\mathfrak m_w^k)$;
  \item
    If $w$ is archimedean, let
    $\pi:\KK_w^{n+1}\setminus\{0\}\to\PP^n(\KK_w)$ be the natural
    projection. Than $\mu_w$ is defined by
    \[\mu_w(U)=\frac{\Vol(\pi^{-1}(U)\cap B_{\Vert\cdot\Vert_w}(1))}%
         {\Vol(B_{\Vert\cdot\Vert_w}(1))},\]
    for any borelian subset~$U$ in $\PP^n(\KK_w)$, where
    $B_{\Vert\cdot\Vert_w}(1)$ denotes the ball of radius $1$ for
    $\Vert\cdot\Vert_w$.
  \end{itemize}
\end{prop}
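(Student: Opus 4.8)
The plan is to reduce the statement to the classical counting of lattice points in homogeneously expanding regions, following the standard strategy for proving equidistribution of rational points on projective space. First I would observe that by linearity and density it suffices to prove the convergence for $f$ of the form $f=\prod_{w\in S}f_w$ with $S\subset\Val(\KK)$ finite, each $f_w$ a continuous function on $\PP^n(\KK_w)$, and $f_w\equiv 1$ for $w\notin S$; indeed finite products of such functions are dense in the continuous functions on the compact space $\PP^n(\Adeles_\KK)$ by Stone--Weierstrass, and the functionals $S_B$ are uniformly bounded in the sup norm, so uniform approximation transfers the limit. One may even further reduce, by another approximation, to the case where each $f_w$ for $w$ non-archimedean is the characteristic function of a coset $\pi_k^{-1}(W)$, i.e. to Question~\ref{question.distribution.p} combined with a single archimedean test function.

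Next I would set up the counting. Write $\KK$ with ring of integers $\mathcal O_\KK$; a point $P\in\PP^n(\KK)$ corresponds to a line in $\KK^{n+1}$, and choosing a generator of the corresponding rank-one projective $\mathcal O_\KK$-submodule identifies $\PP^n(\KK)$ (up to the finitely many ideal classes and up to units) with primitive vectors in $\mathcal O_\KK^{n+1}$ modulo $\mathcal O_\KK^*$. Under this identification the height $H_{\mathcal O(1)}(P)=\prod_{w}\Vert y\Vert_w$ becomes, at the archimedean places, the product of the local sup-norms of the coordinate vector, and the condition $H\le B$ together with the local conditions at the finite places in $S$ (membership in prescribed cosets mod $\mathfrak m_w^k$) and the primitivity condition describe a region in $\bigl(\KK\otimes_\QQ\RR\bigr)^{n+1}\cong\RR^{N(n+1)}$ (where $N=[\KK:\QQ]$) cut out inside a lattice, intersected with sublattices of bounded index and with the complement of sublattices (for the coprimality). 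The archimedean test functions $f_w$ are bounded continuous functions on the directions.

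Then I would invoke a lattice-point count: the number of lattice points in a homogeneously expanding star-shaped region of $\RR^{N(n+1)}$, weighted by a continuous function of the argument, is asymptotic to the volume integral against that function, with an error term of lower order (this is the classical Lipschitz principle, or Davenport's lemma, applied to the region $\{\,v:\prod_w\Vert v\Vert_w\le B\,\}$, which is a Lipschitz-parametrisable body scaling like $B^{n+1}$ in each archimedean factor). Summing over the finitely many ideal classes, handling the unit action by integrating over a fundamental domain for the logarithmic embedding of $\mathcal O_\KK^*$ (this is exactly where the compact torus $\TT$ and the regulator enter), and removing the non-primitive vectors by a convergent Möbius-type inclusion–exclusion over ideals (using that $n\ge 1$ so the relevant Dirichlet series converges, after factoring out $\zeta_\KK$), one obtains that both numerator $\sum_{P}f(P)$ and denominator $\card\PP^n(\KK)_{H\le B}$ are asymptotic to the same leading power of $B$ times, respectively, $\bigl(\prod_{w\in S}\int f_w\,\mmu_w\bigr)$ and $1$; taking the ratio gives the claim, and one checks the archimedean factor of $\mmu_w$ is precisely the normalised volume measure on $\PP^n(\KK_w)$ written in the statement while the non-archimedean factors reproduce the densities $\card W/\card\PP^n(\mathcal O_w/\mathfrak m_w^k)$ because the local conditions modify the count by exactly that proportion in the limit.

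The main obstacle is the uniformity in the error term of the lattice-point count as the finite-place congruence conditions vary: one needs the implied constants in Davenport's lemma to be controlled independently of the sublattice (equivalently, the error to be genuinely lower-order in $B$ uniformly over cosets mod $\mathfrak m_w^k$ for fixed $k$), so that the $S_B$ converge and the limit factorises as a product of local integrals. Once the archimedean main term and a uniform power-saving (or at least $o(B^{n+1})$) error are in hand, everything else is bookkeeping: the passage from a single congruence condition to an arbitrary continuous $f_w$ at finite places, the reassembly of the product measure, and the reduction from general continuous $f$ back to finite products.
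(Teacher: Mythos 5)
Your proposal follows essentially the same route as the paper's own (sketched) proof: reduce to test functions given by an archimedean region together with congruence conditions at finitely many finite places, remove the primitivity condition by M\"obius inversion, count lattice points in the expanding cone-intersected-with-ball region by volume plus a lower-order error, and identify the resulting local densities with the measures $\mmu_w$. The paper only carries this out for $\KK=\QQ$, whereas you also supply the standard extra bookkeeping for a general number field (ideal classes, unit action, regulator), but the underlying argument is the same.
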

As a consequence, we may give a precise answer to
questions~\ref{question.distribution.infinite}
and~\ref{question.distribution.p}:
\begin{coro}
  If~$W$ is an open subset of $\PP^n(\Adeles_\KK)$ such that
  $\mmu_{\PP^n}(\partial W)=0$ then
  \[\CDat\frac{\card(W\cap\PP^n(\KK))_{H\leq B}}{\card\PP^n(\KK)_{H\leq B}}
  @>>B\to+\infty>\mmu_{\PP^n}(W).\]
\end{coro}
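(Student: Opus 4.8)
The plan is to deduce the corollary from Proposition~\ref{prop:distribution.projective} by the classical portmanteau argument relating weak convergence of probability measures to convergence of the masses of Borel sets whose boundary is negligible. To set this up I would introduce the atomic probability measures
\[
\nu_B=\frac1{\card\PP^n(\KK)_{H\leq B}}\sum_{P\in\PP^n(\KK)_{H\leq B}}\delta_P
\]
on the compact space $\PP^n(\Adeles_\KK)$, a rational point being viewed inside the adelic space through its diagonal image. With this notation $S_B(f)=\int f\,\nu_B$, so Proposition~\ref{prop:distribution.projective} says exactly that $\int f\,\nu_B\to\int f\,\mmu_{\PP^n}$ for every continuous $f$, that is, that $\nu_B$ converges weakly to $\mmu_{\PP^n}$; and the quotient appearing in the corollary is precisely $\nu_B(W)$.

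Before the main argument I would record the two topological facts that make it run. Since $\Val(\KK)$ is countable and each $\PP^n(\KK_w)$ is compact metrizable, the product $\PP^n(\Adeles_\KK)=\prod_{w\in\Val(\KK)}\PP^n(\KK_w)$ is itself compact and metrizable. Hence, on the one hand, $\mmu_{\PP^n}$ is a regular Borel measure, inner regular by compact sets and outer regular by open sets; on the other hand, Urysohn's lemma provides, for any two disjoint closed sets, a continuous function on $\PP^n(\Adeles_\KK)$ with values in $[0,1]$ equal to $1$ on one and $0$ on the other.

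The heart of the proof is then the usual sandwiching. For the lower bound, fix $\varepsilon>0$, pick by inner regularity a compact $K\subset W$ with $\mmu_{\PP^n}(K)\geq\mmu_{\PP^n}(W)-\varepsilon$, and take by Urysohn a continuous $f\colon\PP^n(\Adeles_\KK)\to[0,1]$ equal to $1$ on $K$ and vanishing outside $W$, so that $f\leq\mathbf 1_W$. Then $\nu_B(W)\geq\int f\,\nu_B=S_B(f)$, and Proposition~\ref{prop:distribution.projective} gives $\liminf_{B\to+\infty}\nu_B(W)\geq\int f\,\mmu_{\PP^n}\geq\mmu_{\PP^n}(K)\geq\mmu_{\PP^n}(W)-\varepsilon$. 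Symmetrically, the closure $\overline W$ being closed in a compact space hence compact, outer regularity yields an open $U\supset\overline W$ with $\mmu_{\PP^n}(U)\leq\mmu_{\PP^n}(\overline W)+\varepsilon$, and Urysohn a continuous $g$ with $\mathbf 1_{\overline W}\leq g\leq\mathbf 1_U$, whence $\nu_B(W)\leq\nu_B(\overline W)\leq\int g\,\nu_B=S_B(g)$ and $\limsup_{B\to+\infty}\nu_B(W)\leq\int g\,\mmu_{\PP^n}\leq\mmu_{\PP^n}(\overline W)+\varepsilon$. Letting $\varepsilon\to 0$ and invoking the hypothesis $\mmu_{\PP^n}(\partial W)=0$, which forces $\mmu_{\PP^n}(\overline W)=\mmu_{\PP^n}(W)$ since $\overline W\setminus W\subset\partial W$, the $\liminf$ and $\limsup$ are squeezed together and both equal $\mmu_{\PP^n}(W)$, which is the assertion.

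I do not expect a genuine obstacle here: every step is routine measure theory and point-set topology. The only points that require a little care are bookkeeping ones — keeping the approximating functions continuous so that Proposition~\ref{prop:distribution.projective} applies to them verbatim, and invoking the regularity of Borel probability measures on the compact metrizable adelic space, which is the one place where the countability of $\Val(\KK)$, hence metrizability, is used.
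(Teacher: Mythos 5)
Your argument is correct: it is exactly the standard portmanteau deduction (weak convergence implies convergence on continuity sets, via regularity and Urysohn sandwiching on the compact metrizable adelic space) that the paper leaves implicit, stating the corollary as an immediate consequence of Proposition~\ref{prop:distribution.projective} without further proof. Nothing is missing.
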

\begin{proof}[Sketch of the proof of
    proposition~\ref{prop:distribution.projective} for $\KK=\QQ$]
  Take an open cube $\mathcal C=\prod_{i=0}^n\leftopen a_i,b_i\rightopen$
  where $a_i$ and $b_i$
  are real numbers with $a_i<b_i$ for $i\in\{0,\dots,n\}$, an integer
  $M\geq 1$ and an element $P_0\in\PP^n(\ZZ/M\ZZ)$.
  We imbed $\RR^n$ in $\PP^n(\RR)$ and consider $\mathcal C$ as an open
  subset of the projective space. We choose
  a primitive element $y_0$ in $(\ZZ/M\ZZ)^{n+1}$ representing $P_0$.
  We then want to estimate
  \begin{align*}
    &\card\{\,P\in\PP^n(\QQ)\mid H(P)\leq B,P\in\mathcal C
    \text{ and }\pi_M(P)=P_0\,\}\\
    &=\frac 12\sum_{\lambda\in\ZZ/M\ZZ^*}
    \card\{\,y\in\ZZ^{n+1}\mid y\text{ primitive},
    \Vert y\Vert_\infty\leq B, y\in\pi^{-1}(\mathcal C)\text{ and }
    y\equiv \lambda y_0\,[M]\,\}\\
    &=\frac12\sum_{\substack{d>0\\\lambda\in\ZZ/M\ZZ^*}}\mu(d)\card\{\,
    y\in(d\ZZ)^{n+1}\setminus\{0\}\mid\Vert y\Vert_\infty\leq B,y\in
    \pi^{-1}(\mathcal C)\text{ and }y\equiv \lambda y_0\,[M]\,\}
  \end{align*}
  where $\mu:\NN\setminus\{0\}\to\{-1,0,1\}$ denotes the Moebius
  function. As $y_0$ is primitive, the set we obtained in the sum is empty
  if $M$ and $d$ are not coprime. Otherwise it is the intersection
  of the translation of a lattice of covolume $(dM)^{n+1}$, the
  cone $\pi^{-1}(\mathcal C)$ and the ball $B_{\Vert\cdot\Vert_\infty}(B)$.
  Thus its cardinal may be approximated by
  \[\frac{\Vol(\pi^{-1}(\mathcal C)\cap B_{\Vert\cdot\Vert_\infty}(1))B^{n+1}}%
         {(dM)^{n+1}}\]
  with an error term which is bounded up to a constant by
  $\Bigl(\frac Bd+1\Bigr)^n$. Up to an error term left to the reader,
  we get that the sum is equivalent to
  \[\frac 12\Vol(\pi^{-1}(\mathcal C)\cap B_{\Vert\cdot\Vert_\infty}(1))
  \times\frac{\varphi(M)}{M^{n+1}\prod_{p\mid M}\Bigl(1-\frac 1{p^{n+1}}\Bigr)}
  \times\frac 1{\zeta_\QQ(n+1)}B^{n+1}.\]
  In this product, the term
  \[\frac{\varphi(M)}{M^{n+1}\prod_{p\mid M}\Bigl(1-\frac 1{p^{n+1}}\Bigr)}\]
  is $\card(\PP^n(\ZZ/M\ZZ))^{-1}$. In particular, this implies that
  ${\frac 12\Vol(B_{\Vert\cdot\Vert}(1))/\zeta_{\QQ}(n+1)}$ is the limit
  of $\card\PP^n(\QQ)_{H\leq B}/B^{n+1}$ as $B$ goes to infinity.
\end{proof}

\subsection{Adelic measure}
By choosing different norms on the anticanonical line bundle,
and thus different heights on a variety, one realizes that the measure
which gives the asymptotic distribution as $B$ goes to infinity may be
directly defined from the adelic norm on $\omega_V^{-1}$, exactly
as a Riemannian metric defines a volume form. This construction in
fact applies to any nice variety equipped with an adelic metric.
\begin{cons}
  \label{cons:measure}
  Let $V$ be a nice variety with a rational point.
  We fix an adelic norm ${(\Vert\cdot\Vert_w)_{w\in\Val(\KK)}}$
  on $\omega_V^{-1}$. The formula for the change of variables
  (see \cite[\S2.2.1]{weil:adeles}) proves that the local measures
  \begin{equation}
    \label{equ:local.measure}
    \left\Vert\frac\partial{\partial x_1}\wedge
    \frac\partial{\partial x_2}\wedge\dots\wedge
    \frac\partial{\partial x_n}\right\Vert_w\Haar{x_{1,w}}\Haar{x_{2,w}}\dots
    \Haar{x_{n,w}},
  \end{equation}
  where $(x_1,\dots,x_n):\Omega\to\KK_w^n$ is a local system
  of coordinates defined on an open subset $\Omega$ of $V(\KK_w)$,
  does not depend on the choice of coordinates; therefore by patching together
  these measures, we get a mesure $\oomega_{V,w}$ on $V(\KK_w)$, which induces
  a probability measure
  \[\mmu_{V,w}=\frac 1{\oomega_{V,w}(V(\KK_w))}\oomega_{V,w}.\]
  Then the product
  \[\mmu_V=\prod_{w\in\Val(\KK)}\mmu_{V,w}\]
  is a probability measure on the adelic space $V(\Adeles_\KK)$.    
\end{cons}
\begin{rema}
  For the projective space, this construction gives the right asymptotic
  distribution for the points of bounded height. So it is natural to try to
  generalise to other varieties. To state precisely our question, we
  introduce the counting measure defined by the set of points of bounded
  height.
\end{rema}
\begin{defi}
  For any non-empty subset $W\subset V(\KK)$ we define, for $B$
  a real number bigger
  than the smallest height of a point of~$W$,
  \[\ddelta_{W_{H\leq B}}=\frac 1{\card W_{H\leq B}}\sum_{P\in W_{H\leq B}}
  \ddelta_P,\]
  where $\ddelta_P$ denotes the Dirac measure at $P$ on the adelic space.
\end{defi}
\begin{namedtheorem}[Na\"ive equidistribution]
  \label{defi:naiveequidistribution}
  We shall say that the \emph{na\"ive equidistribution}%
  \index{Naiveequidistribution=Na\"ive equidistribution}%
  \index{Equidistribution>Naive=(Na\"ive ---)}
  \textup{(NE)}\namedlabel{label:overoptimistic}{NE}
  holds if the measure $\ddelta_{V(\KK)_{H\leq B}}$ converges
  to~$\mmu_V$ as~$B$ goes to infinity for the weak topology.
\end{namedtheorem}
\begin{rema}
  In other words, the na\"ive equidistribution holds if for any continuous
  function $f:V(\Adeles_\KK)\to\RR$, one has the convergence
  \[\CDat\int_{V(\Adeles_\KK)}f\ddelta_{V(\KK)_{H\leq B}}@>>B\to+\infty>
  \int_{V(\Adeles_\KK)}f\mmu_V.\]
\end{rema}
This equidistribution may seem to be overoptimistic and one may wonder
whether there exists any case besides the projective space for which it
is valid.
\begin{theo}
  \label{theo:equid.flag}
  If~$V$ is a generalized flag variety, that is a quotient $G/P$ where~$G$
  is a linear algebraic group and~$P$ a parabolic subgroup of~$G$, then
  \eqref{label:overoptimistic} is true.
\end{theo}
\begin{exam}
  Grassmannian are examples of such flag varieties. 
  Any smooth quadric with a rational point is a generalized flag variety
  for the orthogonal group. Therefore any smooth quadric with a rational point
  satisfies the naive equidistribution.
\end{exam}
\begin{proof}[Tools of the proof of theorem~\ref{theo:equid.flag}]
  To prove this result one may use harmonic analysis on the adelic
  space $G/P(\Adeles_\KK)$ and apply Langland's work on Eisenstein series
  (see \cite[corollaire 6.2.17]{peyre:fano}, \cite{langlands:eisenstein}).
\end{proof}
So we have solved the case of hypersurfaces of degree~2. In higher degrees,
the equidistribution is an easy consequence of the very general result
of Birch \cite{birch:forms} about forms with many variables.
\begin{theo}
  Let $V\subset \PP^n_\QQ$ be a smooth hypersurface of degree $d$ such
  that $V(\Adeles_\QQ)\neq\emptyset$ with $n>(d-1)2^d$, then~$V$ satisfies
  \eqref{label:overoptimistic}.
\end{theo}
\begin{rema}
In fact, it applies to all the cases considered by Birch, that is
for smooth complete intersection of~$m$ hypersurfaces of the same degree~$d$
if $n>m(m+1)(d-1)2^{d-1}$.
\end{rema}

\subsection{Weak approximation}
The first indications of the na\"ivet\'e of \eqref{label:overoptimistic}
appear when one considers obvious consequences of it. Let us
recall the definition of weak approximation:
\begin{defi}
  A nice veriety $V$ satisfies \emph{weak approximation}%
  \index{Weak approximation}
  if the rational points of~$V$ are dense in the adelic
  space $V(\Adeles_\KK)$.
\end{defi}
\begin{listrems}
  \remark
  Let~$V$ be a nice variety with a rational point. If it satisfies the
  na\"ive equidistribution, then it satisfies weak approximation and
  therefore $V(\KK)$ is dense for Zariski topology.

  This follows from the fact that for any real number~$B$, the support
  of the measure $\ddelta_{V(\KK)_{H\leq B}}$ in $V(\Adeles_\KK)$
  is contained in the closure $\overline {V(\KK)}$
  of the set of rational points.
  But the support of the measure $\mmu_V$ is the whole adelic space.
  Thus \eqref{label:overoptimistic} implies that
  $\overline{V(\KK)}=V(\Adeles_\KK)$. For the last statement,
  we use that if~$V$
  has an adelic point then for any place~$w$ of~$\KK$,
  the set $V(\KK_w)$ is Zariski dense in~$V$.
  \remark
  About the density for Zariski topology, we could always reduce to that case
  by considering the desingularisation of the closure of the rational
  points for Zariski topology. Such a reduction is of course not possible when
  the points are dense for Zariski topology but the variety does not satisfy
  weak approximation.
\end{listrems}
\begin{conv}
  From now on, we assume that $V$ is a nice variety in which
  the set of rational points $V(\KK)$ is zariski dense.
\end{conv}
About weak appproximation, we are going to give a quick
overview of the Brauer-Manin
obstruction, which was introduced by Y.~Manin in \cite{manin:brauer}
to explain the previously known counterexamples to weak approximation
(see also \cite{peyre:obstruction} for a survey).
\begin{cons}
  For a nice variety~$V$, we define its \emph{Brauer group}%
  \index{Brauer group} as the cohomology group
  \[\Br(V)=H^2\etale(V,\GG_m)\glossary{Br$(V)$: Brauer group of $V$.}\]
  which defines a contravariant functor from nice varieties
  to the category of abelian groups. In the case of the spectrum
  of a field of characteristic~$0$,
  we get the Brauer group of $\LL$, which is defined in
  terms of Galois cohomology by
  \[\Br(\LL)=H^2(\Gal(\overline\LL/\LL),\GG_m),\]
  where $\overline\LL$ is an algebraic closure of $\LL$.
  Class field theory gives for any place $w$ an injective morphism
  \[\inv_w:\Br(\KK_w)\longrightarrow\QQ/\ZZ\]
  which is an isomorphism if~$w$ is not archimedean, so that the
  sequence
  \begin{equation}
    \label{equ:classfield}
    \CDat 0\to \Br(\KK)\to\bigoplus_{w\in\Val(\KK)}
    \Br(\KK_w)@>\sum_{w}\inv_w>>\QQ/\ZZ\to 0
  \end{equation}
  is an exact sequence.
  Therefore we may define a pairing
  \begin{align*}
    \Br(V)\times V(\Adeles_\KK)&\longrightarrow\QQ/\ZZ\\
    (\alpha,(P_w)_{w\in\Val(\KK)})&
    \longmapsto\sum_{w\in\Val(\KK)}\inv_w(\alpha(P_w))
  \end{align*}
  where $\alpha(P_w)$ denotes the pull-back of $\alpha$
  by the morphism $\Spec(\KK_w)\to V$ defined by~$P_w$.
  Let us denote by $\Br(V)\dual$ the group $\Hom(\Br(V),\QQ/\ZZ)$
  then the above pairing may be seen as a map
  \[\eta:V(\Adeles_\KK)\longrightarrow\Br(V)\dual.\]
  If $P\in V(\KK)$ then the fact that \eqref{equ:classfield}
  is a complex implies that
  \[\sum_{w\in\Val(\KK)}\inv_w(\alpha(P))=0;\]
  in other words, $\eta(P)=0$. By arguments of continuity,
  one gets that
  \[\overline{V(\KK)}\subset V(\Adeles_\KK)^{\Br}
  =\{\,P\in V(\Adeles_\KK)\mid \eta(P)=0\,\}.\]
  The element $\eta(P)$ is called the \emph{Brauer-Manin obstruction}%
  \index{Brauer-Manin obstruction}
  to weak approximation at~$P$.
\end{cons}
\begin{rema}
  Let $\overline\KK$ be an algebraic closure of $\KK$ and
  $\overline V=V_{\overline\KK}$.
  Since we assume $V$ to have a rational point, there is an exact sequence
  \[0\to\Br(\KK)\to\ker(\Br(V)\to\Br(\overline V))\to
  H^1(\Gal(\Kbar/\KK),\Pic(\overline V))\to 0.\]
  Also the exponential map gives an exact sequence
  \begin{multline*}
    H^1(V_\CC,\mathcal O_V)\to\Pic(\overline V)\to H^2(V(\CC),\ZZ)\to\\
    H^2(V_\CC,\mathcal O_V)\to \Br(\overline V)
    \to H^3(V(\CC),\ZZ)_{\tors}
  \end{multline*}
  Thus assuming that $H^i(V,\mathcal O_V)=\{0\}$ for $i=1$ and $i=2$,
  which is automatic
  for Fano varieties by Kodaira's vanishing theorem, we
  get first that the geometric Picard of the variety is finitely generated.
  Thus the action of the Galois group on the Picard group is trivial
  over a finite extension of the ground field.
  Therefore, in this case,
  the groups $H^1(\Gal(\Kbar/\KK),\Pic(\overline V))$ and $\Br(\overline V)$
  are finite.
  Hence the cokernel of the morphism $\Br(\KK)\to\Br(V)$
  is finite, which implies that $V(\Adeles_\KK)^{\Br}$ is open
  and closed in the adelic space.
\end{rema}
If one hopes that the Brauer-Manin obstruction to the weak approximation
is the only one, then it is natural to define the measure induced by the
probability measure $\mmu_V$ on the space on which the obstruction is $0$.
Since we assume that the variety~$V$ has a rational point, the space
$V(\Adeles_\KK)^{\Br}$ is not empty. In that setting, we may give
the following definition:
\begin{defi}
  The measure $\mmu_V^{\Br}$ is defined as follows:
  for any Borelian subset~$W$ of $V(\Adeles_\KK)$
  \[\mmu_V^{\Br}(W)=\frac{\mmu_V(W\cap V(\Adeles_\KK)^{\Br})}%
        {\mmu_V(V(\Adeles_\KK)^{\Br})}.\]
\end{defi}
The following question then takes into account the Brauer-Manin obstruction
to weak approximation:
\begin{namedtheorem}[Global equidistribution]\label{globalequidistribution}
  We shall say that \emph{global equidistribution}%
  \index{Global equidistribution}%
  \index{Equidistribution>Global=(Global ---)}
  holds if the measure $\ddelta_{V(\KK)_{H\leq B}}$ converges
  weakly to $\mmu_V^{\Br}$ as $B$ goes to infinity.
\end{namedtheorem}
Potential counterexamples to global equidistribution have been known for
quite a long time (see for example \cite{serre:mordellweil}),
but Manin was the first to consider accumulating subsets, which we will
study at length in the next section.

\subsection{Accumulating subsets}
In fact, the support of the limit of the measure
$\ddelta_{V(\KK)_{H\leq B}}$ is, in general, much smaller than
the closure $\overline V(\KK)$ of the set of rational points.
Let me give a few examples.

\subsubsection{The plane blown up in one point}
The blowing up of the projective plane at the point $P_0=[0:0:1]$
may be described as the hypersurface~$V$ in the product
$\PP^2_\QQ\times\PP^1_\QQ$ defined by the equation $XV=YU$,
where $X,Y,Z$ denote the coordinates on the first factor
and $U,V$ the coordinates on the second one.
Let $\pi$ be the projection on the first factor. Then
$E=\pi^{-1}(P_0)$ is an exceptional divisor on~$V$ and the
second projection $\pr_2$ defines an isomorphism from~$E$
to $\PP^1_\QQ$. Let~$U$ be the complement of~$E$ in~$V$. The
projection~$\pi$ induces an isomorphism from~$U$
to $\PP^2_\QQ\setminus\{P_0\}$.
As an exponential height, we may use the map
\begin{align*}
  H:V(\QQ)&\longrightarrow\RR_{>0}\\
  (P,Q)&\longmapsto H_{\mathcal O_{\PP^2_\QQ}(1)}(P)^2
  H_{\mathcal O_{\PP^1_\QQ}(1)}(Q).
\end{align*}
This example as been used as a sandbox case for the study
of rational points of bounded height by many people, and the estimation
may be summarized as follows:
\begin{prop}[J.-P Serre, V.~V. Batyrev, Y. I. Manin, et al.]
  On the exceptional line, the number of points of bounded height
  is given by
  \[\card E(\QQ)_{H\leq B}\sim\frac2{\zeta_\QQ(2)}B^2\]
  as $B$ goes to infinity, whereas on its complement it is given by
  \[\card U(\QQ)_{H\leq B}\sim\frac8{3\zeta_\QQ(2)^2}B\log(B)\]
  as $B$ goes to infinity.
\end{prop}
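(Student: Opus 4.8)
The plan is to count lattice points in the two pieces separately, exploiting the explicit description of $V$ as a subvariety of $\PP^2_\QQ \times \PP^1_\QQ$ and the factored height $H(P,Q) = H_{\mathcal O(1)}(P)^2 H_{\mathcal O(1)}(Q)$.

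\textbf{The exceptional line.} On $E \cong \PP^1_\QQ$ (via $\pr_2$), the first-factor coordinate is constant equal to $P_0 = [0:0:1]$, so $H_{\mathcal O_{\PP^2}(1)}(P_0) = 1$ and the height restricted to $E$ is simply $H_{\mathcal O_{\PP^1}(1)}(Q)^{\phantom 2}$... wait, one must be careful: the map $E \to \PP^1$ is $\pr_2$, and along $E$ the point of $\PP^2$ moves, since $E = \pi^{-1}(P_0)$ but on $V \subset \PP^2 \times \PP^1$ the first coordinate is pinned to $P_0$. So indeed $H|_E(Q) = H_{\mathcal O_{\PP^1}(1)}(Q)$ up to the factor from $P_0$, which is a constant. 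Hence $\card E(\QQ)_{H \le B}$ is, up to the bounded factor, the number of $Q \in \PP^1(\QQ)$ with $H_{\mathcal O(1)}(Q) \le B$. This is the classical $\PP^1$ count: writing $Q = [u:v]$ with $u,v$ coprime integers, one counts primitive $(u,v)$ with $\max(|u|,|v|) \le B$ up to sign, which by a Möbius inversion (exactly as in the sketch for $\PP^n$) gives $\frac{2}{\zeta_\QQ(2)}B^2 + O(B\log B)$. The factor $2$ is $\Vol(B_{\|\cdot\|_\infty}(1))/2 = 4/2 = 2$ in dimension $2$. One must track that the constant from $P_0$ does not rescale $B$; since $H|_E = H_{\mathcal O(1)}(Q)$ exactly once coordinates are chosen so that $P_0$ is represented by a primitive integer vector with $\|\cdot\|_v = 1$ at all finite $v$, this is fine, giving the stated asymptotic.

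\textbf{The complement.} On $U \cong \PP^2_\QQ \setminus \{P_0\}$ via $\pi$, a point is $P = [x:y:z]$ with $x,y,z$ coprime integers and $(x,y) \ne (0,0)$; the second coordinate is forced to be $Q = [x:y]$ (from $XV = YU$), so $H_{\mathcal O(1)}(Q) = \max(|x|,|y|)/g$ where $g = \gcd(x,y)$, while $H_{\mathcal O(1)}(P) = \max(|x|,|y|,|z|)$. Thus $H(P) = \max(|x|,|y|,|z|)^2 \cdot \max(|x|,|y|)/g$. The count is therefore
\[
\card U(\QQ)_{H \le B} = \tfrac12 \card\{(x,y,z) \in \ZZ^3 : \gcd(x,y,z)=1,\ (x,y)\ne(0,0),\ \max(|x|,|y|,|z|)^2 \tfrac{\max(|x|,|y|)}{\gcd(x,y)} \le B\}.
\]
Stratify by $g = \gcd(x,y)$: write $x = g x'$, $y = g y'$ with $\gcd(x',y') = 1$; coprimality of $(x,y,z)$ then forces $\gcd(g,z) = 1$. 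With $m = \max(|x'|,|y'|)$, the constraint becomes $\max(g m, |z|)^2 \cdot m \le B$. For fixed $g$ and fixed primitive $(x',y')$ with $\max(|x'|,|y'|) = m$, the number of admissible $z$ with $\gcd(z,g)=1$ is $\frac{\varphi(g)}{g}(2\min(gm, \sqrt{B/m}) + 1) + O(2^{\omega(g)})$. Summing over primitive $(x',y')$ of sup-norm $m$ (there are $\frac{2m}{\zeta(2)} \cdot 4 + O(\dots)$, more precisely $\frac{6}{\pi^2}\cdot(\text{perimeter}) $ — the count of primitive vectors in $[-m,m]^2$ on the boundary shell), then over $g$ and $m$, and carrying out the resulting two-dimensional sum/integral, produces a main term of order $B\log B$. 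The $\log B$ arises exactly from the region where $gm \approx \sqrt{B/m}$, i.e. the two contributions to the height are balanced over a logarithmically long range of scales; this is the standard mechanism by which a blow-up produces a secondary term. Pinning the constant $\frac{8}{3\zeta_\QQ(2)^2}$ requires keeping the density $\frac{1}{\zeta(2)}$ of primitive pairs, a second factor $\frac{1}{\zeta(2)}$ (or an Euler product $\prod_p(\dots)$ that telescopes to it) from the $\gcd(g,z)=1$ condition summed against $\frac{\varphi(g)}{g}$, and the numerical factor $\frac{8}{3}$ from integrating $\min(gm,\sqrt{B/m})$ against the measure $dm\,dg$ over the relevant cone.

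\textbf{Main obstacle.} The genuinely delicate part is the complement: controlling error terms uniformly in $g$ while summing a three-fold family of lattice-point counts whose constraint region has a curved boundary ($\max(gm,|z|)^2 m \le B$), and in particular isolating the $B\log B$ main term with the correct constant rather than an $O(B\log B)$ bound. The cleanest route is to replace the inner counts by volumes plus boundary error terms (à la the $\PP^n$ sketch, with error $O((\text{length of shorter side})^{d-1})$), substitute, and then evaluate the resulting Riemann sum as a double integral over $\{(g,m) : g,m \ge 1\}$ in the region $g^2 m^3 \le B$ (the regime $gm \le \sqrt{B/m}$) plus $\{g^2m^3 > B,\ m \le B\}$ (the regime $|z| \le gm$), the crossover contributing the logarithm; the error terms must be shown to be $o(B\log B)$, which is where the bulk of the technical work lies. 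The exceptional-line asymptotic, by contrast, is essentially a citation of the $\PP^1$ case of Proposition~\ref{prop:distribution.projective}.
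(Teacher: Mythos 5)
The paper states this proposition without proof (it is quoted as a classical result of Serre, Batyrev, Manin et al.), so there is no internal argument to compare against; judged on its own merits, your reduction is the standard one and the setup is correct. The height on $E\cong\PP^1_\QQ$ is exactly $H_{\mathcal O_{\PP^1}(1)}$ because $H_{\mathcal O_{\PP^2}(1)}(P_0)=1$ (the first factor is pinned to $P_0$, it does not move along $E$), and the $\PP^1$ count gives $\frac2{\zeta_\QQ(2)}B^2$. On $U\cong\PP^2_\QQ\setminus\{P_0\}$ the height in primitive coordinates $[x:y:z]$ is indeed $\max(|x|,|y|,|z|)^2\max(|x|,|y|)/\gcd(x,y)$, and the stratification by $g=\gcd(x,y)$, with $x=gx'$, $y=gy'$, $m=\max(|x'|,|y'|)$ and the condition $\gcd(g,z)=1$, is the right frame.

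There is, however, a genuine error in the central counting step. The condition $\max(gm,|z|)^2m\le B$ is equivalent to the conjunction of $g^2m^3\le B$ and $|z|\le\sqrt{B/m}$; hence the set of admissible $z$ is the full interval $[-\sqrt{B/m},\sqrt{B/m}]$ when $g^2m^3\le B$ and is \emph{empty} otherwise. The correct count is therefore $\frac{\varphi(g)}{g}\bigl(2\sqrt{B/m}+1\bigr)+O(2^{\omega(g)})$ if $g^2m^3\le B$ and $0$ if not — never $\frac{\varphi(g)}{g}\bigl(2\min(gm,\sqrt{B/m})+1\bigr)$. Your formula replaces $\sqrt{B/m}$ by $gm$ exactly where there are points, and counts about $2\sqrt{B/m}$ points where there are none; correspondingly, your second integration region $\{g^2m^3>B,\ m\le B\}$ (``the regime $|z|\le gm$'') is empty, and as written the sum over it would even diverge in $g$. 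With the corrected count the computation closes: for fixed $g$, summing $\frac{8m}{\zeta(2)}\cdot\frac{\varphi(g)}{g}\cdot2\sqrt{B/m}$ over $m\le(B/g^2)^{1/3}$ gives $\frac{16\sqrt B}{\zeta(2)}\cdot\frac{\varphi(g)}{g}\cdot\frac23\cdot\frac{\sqrt B}{g}$, then $\sum_{g\le\sqrt B}\varphi(g)g^{-2}\sim\frac{\log B}{2\zeta(2)}$ supplies both the logarithm and the second factor $\zeta(2)^{-1}$, and the global factor $\frac12$ lands exactly on $\frac{8}{3\zeta_\QQ(2)^2}B\log B$. In particular the logarithm comes from the divergence of $\sum_g\varphi(g)g^{-2}$ (equivalently, equal contributions from each dyadic range of $H_{\mathcal O(1)}(Q)$), not from a crossover between the two regimes you describe, one of which contributes nothing.
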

\begin{rema}
  Thus there are much more rational points on the exceptional
  line~$E$ than on the dense open subset~$U$.
  In fact, since the points on the exceptional line are distributed
  as on $\PP^1_\QQ$, we get that the measure $\ddelta_{V(\QQ)_{H\leq B}}$
  converges to $\mmu_E$ for the weak topology.
\end{rema}
On the other hand, if we only consider the rational points on
the open set $U$, we get the right limit:
\begin{prop}
  The measure $\delta_{U(\QQ)_{H\leq B}}$ converges to $\mmu_V$
  for the weak topology as $B$ goes to infinity.  
\end{prop}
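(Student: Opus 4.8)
The plan is to compare directly the counting problem on $U(\QQ)$ with the distribution problem on $\PP^2_\QQ$, for which the answer is already known by Proposition~\ref{prop:distribution.projective}. Recall that $\pi$ induces an isomorphism $U\xrightarrow{\sim}\PP^2_\QQ\setminus\{P_0\}$, so a rational point on $U$ is the same thing as a rational point $P=[x:y:z]\in\PP^2(\QQ)$ with $(x,y)\neq(0,0)$; writing $x,y,z$ for coprime integer coordinates, the exceptional fibre direction is $Q=[x:y]\in\PP^1(\QQ)$, and one checks $H_{\mathcal O_{\PP^1}(1)}(Q)=\max(|x|,|y|)/\gcd(x,y)$. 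Thus the height used here is
\[
H(P,Q)=\max(|x|,|y|,|z|)^2\cdot\frac{\max(|x|,|y|)}{\gcd(x,y)}.
\]
First I would fix a continuous function $f$ on $V(\Adeles_\QQ)$; since $U$ is open and dense and the complement $E$ is a single $\PP^1$, restricting $f$ to the adelic points coming from $U$ and pulling back along $\pi$ gives a function $g$ on $\PP^2(\Adeles_\QQ)$ that is continuous away from the codimension-one locus above $P_0$, which is $\mmu_{\PP^2}$-negligible; so it suffices to understand $S_B(g):=\frac1{\card U(\QQ)_{H\le B}}\sum_{P\in U(\QQ)_{H\le B}} g(P)$ and show it tends to $\int g\,\mmu_{\PP^2}$, because $\mmu_V$ pushes forward to $\mmu_{\PP^2}$ under $\pi$ (both being the canonical measures, and $\pi$ an isomorphism off a negligible set).

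The core is then an asymptotic count with congruence and archimedean constraints, exactly as in the sketch of Proposition~\ref{prop:distribution.projective}. I would stratify $U(\QQ)$ by the value $d=\gcd(x,y)$: writing $x=dx'$, $y=dy'$ with $\gcd(x',y')=1$, coprimality of $(x,y,z)$ forces $\gcd(d,z)=1$, and the height becomes $\max(d|x'|,d|y'|,|z|)^2\cdot d\max(|x'|,|y'|)$. For a fixed cube $\mathcal C\subset\PP^2(\RR)$ and a fixed congruence condition modulo $M$ on a primitive representative, the count of $(x',y',z)$ with $\gcd(x',y')=1$, $\gcd(d,z)=1$, the archimedean constraint, and the congruence, is handled by Möbius inversion over $\gcd(x',y')$ and over $\gcd(d,z)$, each step introducing a factor like $1/\zeta_\QQ(\cdot)$ and a local density at the primes dividing $M$, together with the Euclidean volume of the relevant region; the error terms are, as before, one power smaller and summable. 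Summing over $d$ produces the dominant term: for $B$ large one finds $\card U(\QQ)_{H\le B}\sim c\,B\log B$ with the congruence/archimedean dependence factoring as a product $\prod_w \mmu_{\PP^2,w}(\cdot)$, the $\log B$ coming from the sum over $d$ up to roughly $B^{1/3}$ (or whatever the effective range is) of $1/d$ after the volume scaling. Dividing, the $c\,B\log B$ normalisations cancel and $S_B(g)\to\int_{\PP^2(\Adeles_\QQ)} g\,\mmu_{\PP^2}=\int_{V(\Adeles_\QQ)} f\,\mmu_V$, which is the claimed weak convergence.

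The main obstacle is the uniformity of the error terms across the stratification by $d$ and simultaneously across the congruence data: one must check that summing the lattice-point error $(B/d+1)^{2}$-type bounds over all $d$ does not swamp the main term $B\log B$, and that the local densities at places dividing $M$ assemble correctly into $\mmu_{\PP^2,w}$ rather than some twisted measure — the extra factor $\max(|x'|,|y'|)$ in the height (coming from the $\PP^1$ part) deforms the archimedean region but not its measure-theoretic behaviour, and this is what needs to be verified carefully. A secondary technical point is justifying the reduction from continuous $f$ on $V(\Adeles_\QQ)$ to $g$ on $\PP^2(\Adeles_\QQ)$: one approximates $f$ by functions supported away from $E$ and controls the contribution of points of $U(\QQ)$ that are $w$-adically close to $E$ for some place $w$, using that $\mmu_V(\text{neighbourhood of }E)\to0$, which follows once the equidistribution is established on a generating family of test functions — so the argument is, as usual, bootstrapped through a dense subclass of test functions (indicator-like functions of products of cubes and congruence classes) for which the explicit count above applies.
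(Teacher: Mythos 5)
The paper states this proposition without proof, so your proposal has to stand on its own. The overall engine is the right one and is how this sandbox example is usually treated: identify $U(\QQ)$ with $\PP^2(\QQ)\setminus\{P_0\}$, reduce to indicator functions of a real cube times a congruence class, stratify by $d=\gcd(x,y)$, count lattice points with M\"obius inversion, with the $\log B$ coming from $\sum_d 1/d$. But there is a genuine conceptual error at the point where you decide what the limit must be. You assert that $\pi_*\mmu_V=\mmu_{\PP^2}$, reduce the target to $S_B(g)\to\int g\,\mmu_{\PP^2}$, and add that the extra factor $\max(|x'|,|y'|)$ in the height ``deforms the archimedean region but not its measure-theoretic behaviour''. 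Both claims are false, and the second is exactly the opposite of the content of the proposition. Since $\omega_V^{-1}=\pi^*\omega_{\PP^2_\QQ}^{-1}\otimes\mathcal O(-E)$, the push-forward of $\oomega_{V,w}$ differs from $\oomega_{\PP^2,w}$ by a non-constant density (at the archimedean place it is, up to a constant, $\max(|x|,|y|)/\max(|x|,|y|,|z|)$ in primitive coordinates, with an integrable singularity at $P_0$, and there are $p$-adic analogues produced by the sum over $p^k\mathbin{\Vert}d$). Concretely, for fixed $d$ your archimedean counting region is $\{\max(|x|,|y|,|z|)^2\max(|x|,|y|)\leq Bd\}$, whose radial function is not that of the sup-norm ball, so the density you obtain for a cube $\mathcal C$ is proportional to $\Vol(\pi^{-1}(\mathcal C)\cap R_1)$ with $R_1=\{\max(|x|,|y|,|z|)^2\max(|x|,|y|)\leq1\}$; this is the push-forward of $\mmu_{V,\infty}$, not $\mmu_{\PP^2,\infty}$. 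If you carry out your own computation correctly you will therefore not land on $\mmu_{\PP^2}$, and the verification that is actually required --- that the local densities produced by the count coincide with the measures $\oomega_{V,w}$ attached to the anticanonical metric of the blow-up --- is precisely the step your proposal waves away with a false identity.

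A secondary slip: after substituting $x=dx'$, $y=dy'$ the height is $\max(d|x'|,d|y'|,|z|)^2\cdot\max(|x'|,|y'|)$, not $\max(d|x'|,d|y'|,|z|)^2\cdot d\max(|x'|,|y'|)$. With your extra factor of $d$ the sum over $d$ converges and the $B\log(B)$ main term disappears, so the formula as written contradicts the $\sum_d 1/d$ heuristic you correctly invoke two lines later; presumably a typo, but it sits at the heart of the computation and should be fixed.
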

\begin{listrems}
  \remark
  Let~$W$ be an infinite subset of $V(\KK)$.
  If the measure $\ddelta_{W_{H\leq B}}$ converges to $\mmu_V$
  for the weak topology, then, for any strict closed subvariety $F$ in $V$,
  we have that
  \[\card(W\cap F(\QQ))_{H\leq B}=o(\card W_{H\leq B})\]
  since we have $\mmu_V(F(\Adeles_\KK))=0$.
  Thus any strict closed subset with a strictly positive contribution
  to the number of points has to be removed to get equidistribution.
  \remark
  It may seem counterintuitive that by removing points, we get a measure
  with a larger support. But this comes from the fact that we divide
  the counting measure on $U$ by a smaller term. From this example, it
  follows that it is natural to consider only the points outside
  a set of ``bad'' points. The problem is that this set of bad points
  might be quite big.
\end{listrems}

\subsubsection{The principle of Manin}
The principle suggested by Manin and his collaborators
in the funding papers \cite{batyrevmanin:hauteur} and
\cite{fmt:fano} is that, on Fano varieties,
there should be an open subset on which the points of bounded
height behave as expected. Let us give a precise expression for this
principle, in a slightly more general setting.
Since this principle deals with the number of points of bounded
height rather than their distribution, we have to introduce
another normalisation of the measures to get a conjectural
value for the constant, which is defined as a volume.
\begin{notas}
  Let $\NS(V)$\glossary{$\text{NS}(V)$: N\'eron-Severi group}
  be the N\'eron-Severi group of~$V$,%
  \index{NeronSeveri group=N\'eron-Severi group}
  that is the quotient
  of the Picard group by the connected component of the
  neutral element. We put $\NS(V)_\RR=\NS(V)\otimes_\ZZ\RR$
  and denote by $\Ceffof V$ the closed cone in $\NS(V)_\RR$
  generated by the classes of effective divisors.
  We write $\Ceffof V\dual$ for the dual of the effective
  cone in the dual space $\NS(V)_\RR\dual$:
  \[\Ceffof V\dual=\{\,y\in\NS(V)_\RR\dual\mid
  \forall x\in\Ceffof V,\langle y,x\rangle\geq 0\,\}.\]
\end{notas}
To construct the constant, we shall restrict ourselves to a setting
in which the local measures can be normalized using the action
of the Galois group of~$\KK$ on the Picard group. Therefore,
we make the following hypothesis:
\begin{hypos}
  \label{hypos:conditions}%
  From now on, $V$ is a nice variety, which satisfies the following conditions:
  \begin{conditions}
  \item A multiple of the class of $\omega_V^{-1}$ is the sum of
    an ample divisor and a divisor with normal crossings;
  \item The set $V(\QQ)$ is Zariski dense;
  \item The groups $H^i(V,\mathcal O_V)$ are $\{0\}$ if $i\in\{1,2\}$;
  \item The geometric Brauer group $\Br(\overline V)$ is trivial and
    the geometric Picard group $\Pic(\overline V)$ has no torsion;
  \item The closed cone
    $\Ceffof {\overline V}$
    is generated by the classes of a finite set of effective
    divisors.
  \end{conditions}
\end{hypos}
\begin{cons}
  \label{cons:tamagawa}
  We choose a finite set~$S$ of places containing the archimedean
  places and the places of bad reduction for~$V$. Let~$\LL$
  be a finite extension of $\KK$ such that the Picard
  group $\Pic(V_\LL)$ is isomorphic to the geometric
  Picard group $\Pic(\overline V)$. We assume that~$S$
  contains all the places which ramify in the extension $\LL/\KK$.
  With this assumption, for any place~$w\in\Val(\KK)\setminus S$,
  let $\FF_w$ be the residual field at $w$. The Frobenius lifts
  to a an element $(w,\LL/\KK)$ in $\Gal(\LL/\KK)$ which is well
  defined up to conjugation (see \cite[\S1.8]{serre:corpslocaux}).
  Then we can consider the local factors of the~$L$ function defined
  by the Picard group:
  \[L_w(s,\Pic(\overline V))=\frac 1{\det(1-\card \FF_w^{-s}(w,\LL/\KK)|
    \Pic(\overline V))},\]
  where $s$ is a complex number with $\Re(s)>0$. If the real part
  of~$s$ satisfies $\Re(s)>1$, then the eulerian product
  \[L_S(s,\Pic(\overline V))=\prod_{w\in\Val(\KK)\setminus S}L_w(s,\overline V)\]
  converges. For $w\in\Val(\KK)$,
  we define $\lambda_w=L_w(1,\Pic(\overline V))^{-1}$ if $w\not\in S$
  and $\lambda_w=1$ otherwise. We put $t=\rk(\Pic(V))$.
  It follows from the Weil's conjecture proven
  by P.~Deligne \cite{deligne:weil} that the product of measures
  \begin{equation}\label{equ:adelicmeasure}
      \oomega_V=\frac{\lim_{s\to 1}(s-1)^tL_S(s,\Pic(\overline V))}%
             {\sqrt{d_\KK}^{\,\dim(V)}}\prod_{w\in\Val(\KK)}\lambda_w\oomega_{V,w}
  \end{equation}
  converges (see \cite[\S2.1]{peyre:fano}). We may then define the
  \emph{Tamagawa-Brauer-Manin volume of $V$} as
  \[\tau^{\Br}(V)=\oomega_V(V(\Adeles_\KK)^{\Br}).\]
  We also introduce the constant
  \[\alpha(V)=\frac1{(t-1)!}\int_{C^1\eff(V)\dual}e^{-\langle\omega_V^{-1},y\rangle}
  \Haar y\]
  which is a rational number under the
  hypothesis~\ref{hypos:conditions}~(v),
  and the integer
  \[\beta(V)=\card(\Br(V)).\]
  Then the \emph{empirical constant} associated to the chosen metric
  on~$V$ is the constant
  \[C(V)=\alpha(V)\beta(V)\tau^{\Br}(V).\]
\end{cons}
\begin{namedtheorem}[Batyrev-Manin principle]
  Let~$V$ be a variety which satisfies the conditions~\ref{hypos:conditions}.
  We say that~$V$ satisfies the
  \emph{refined Batyrev-Manin principle}\index{Batyrev-Manin principle} if
  there exists a dense open subset~$U$ of~$V$ such that
  \begin{equation}
    \label{equ:batyrevmanin}
    \card U(\KK)_{H\leq B}\sim C(V)B\log(B)^{t-1}
  \end{equation}
  as $B$ goes to infinity.
\end{namedtheorem}
For equidistribution, we may introduce the following notion
\begin{namedtheorem}[Relative equidistribution]%
  \label{defi:relativeequidistribution}
  Let $W$ be an infinite subset of $V(\KK)$, we say that
  \emph{the points of~$W$ are equidistributed in~$V$} if the
  counting measure $\ddelta_{W_{H\leq B}}$ converges to~$\mmu_V$.
\end{namedtheorem}
\begin{rema}
  The relation between the Batyrev-Manin principle as stated here and
  the equidistribution may be described as follows: if the principle
  holds for a given open subset~$U$ for any metric on~$V$,
  then the points of $U(\KK)$ are equidistributed on~$V$. Conversely
  if the principle holds for a particular choice of the metric
  and an open subset~$U$ and if the
  points of $U(\KK)$ are equidistributed, then the principle
  holds for any choice of the metric (see \cite[\S3]{peyre:fano}).
\end{rema}

\subsubsection{The counterexample of Batyrev and Tschinkel}
This example was described in \cite{batyrevtschinkel:counter}.
We consider the hypersurface~$V$ in $\PP^3_\QQ\times\PP^3_\QQ$
defined by the equation
\[\sum_{i=0}^3X_iY_i^3=0.\]
We denote by $\mathcal O_V(a,b)$ the restriction
to~$V$ of the line bundle
$\pr_1^*(\mathcal O_{\PP^3_\QQ}(a))\otimes\pr_2^*(\mathcal O_{\PP^3_\QQ}(b))$
Then the anticanonical line bundle on $V$ is given
by $\mathcal O_V(3,1)$ and therefore the function $H:V(\QQ)\to\RR$
defined by
\[H(P,Q)=H_{\mathcal O_{\PP^3_\QQ}(1)}(P)^3H_{\mathcal O_{\PP^3_\QQ}(1)}(Q)\]
defines a height relative to the anticanonical line bundle
on~$V$. Let $\pi$ be the projection on the first factor and for any
$P\in\PP^3(\QQ)$, let $V_P=\pi^{-1}(P)$ the fibre over~$P$.
If $P=[x_0:x_1:x_2:x_2]$ with $\prod_{i=0}^3x_i\neq 0$, then
the fibre $V_P$ is a smooth cubic surface which contains $27$
projective lines.
The complement $U_P$ of these $27$ lines is defined over~$\QQ$.
For cubic surfaces, it is expected that the Batyrev-Manin principle holds
for any dense open subset contained in~$U_P$.
For any $P$ as above, let $t_P=\rk(\Pic(V_P))$ be the rank of the Picard
group of the cubic surface corresponding to~$P$.
Thus, according to \eqref{equ:batyrevmanin},
one expects that for any $U\subset U_P$, one has
\[\card U(\QQ)_{H\leq B}\sim C(V_P)B\log(B)^{t_P-1}\]
as $B$ goes to infinity.
One can show
that $t_P\in\{1,2,3,4\}$ and that $t_P=4$ if all the quotients
$x_i/x_j$ are cubes, that is if $P$ is in the image
of the morphism $c$ from $\PP_\QQ^3$ to $\PP_\QQ^3$ defined by
$[x_0:x_1:x_2:x_3]\mapsto[x_0^3:x_1^3:x_2^3:x_3^3]$.
But, on the other hand, by Lefschetz theorem,
the application $(a,b)\mapsto \mathcal O_V(a,b)$ induces
an isomorphims of groups from $\ZZ^2$ to $\Pic(V)$.
Therefore, the principle of Batyrev and Manin would be satisfied
for~$V$ if and only if there existed an open subset~$U$ of~$V$
such that
\[\card U(\QQ)_{H\leq B}\sim C(V)B\log(B)\]
as $B$ goes to infinity. Since the rational points in the
image of~$c$ are dense for Zariski topology, the open set $U$ has
to intersect an open set $U_P$ for some~$P$. Thus the
principle can not hold for both the cubic surfaces and $V$ itself.
\begin{listrems}
  \remark
  In fact, V.~V. Batyrev and Y.~Tschinkel proved in
  \cite{batyrevtschinkel:counter}
  that any dense open set of $V$ contains too many rational
  points over $\QQ(j)$,
  where $j$ is a primitive third root of unity. More recently,
  C.~Frei, D. Loughran, and E.~Sofos proved in
  \cite{freiloughransofos:bundle} that it is in fact the case over
  any number field.
  \remark
  One may look at the set
  \[T=\{\,P\in\PP^3(\QQ)\mid\rk(\Pic(V_P))>1\,\}\]
  that is the set of points for which the rank of the Picard group
  is bigger than the generic one. As we are about to explain,
  \[\card T_{H\leq B}=o(\card \PP^3(\QQ)_{H\leq B})\]
  which means that most of the fibers have a Picard group of rank one.
\end{listrems}
This example lead to the introduction of a new kind of accumulating subsets,
namely thin subsets (see J.-P.~Serre
\cite[\S3.1]{serre:topicsgalois}).
\begin{defi}%
  \label{defi:thin}
  Let~$V$ be a nice variety over the number field~$\KK$.
  A subset $T\subset V(\KK)$ is said to be \emph{thin}\index{Thin subsets},
  if there exists a morphism of varieties $\varphi: X\to V$ which
  satisfies the following conditions:
  \begin{conditions}
  \item
    The morphism~$\varphi$ is generically finite;
  \item
    The morphism~$\varphi$ has no rational section;
  \item
    The set~$T$ is contained in the image of~$\varphi$.
  \end{conditions}
\end{defi}
\begin{listrems}
  \remark
  If~$E$ is an elliptic curve, the group $E(\KK)/2E(\KK)$ is a finite
  group. Let $(P_i)_{i\in I}$ be a finite family of points of $E(\KK)$
  containing a representant for each element of $E(\KK)/2E(\KK)$.
  Then the morphism $\varphi:\coprod_{i\in I}E\to E$ which maps
  a point $P$ in the $i$-th component to $P_i+2P$ gives
  a surjective map onto the sets of rational points. This shows
  that $E(\KK)$ itself is thin.
  \remark
  In the example of Batyrev and Tschinkel,
  as~$T$ is a thin subset in $\PP^3(\QQ)$, it follows from
  \cite[\S13,theorem 3]{serre:mordellweil} that
  \[\card T_{H\leq B}=o(\card\PP^3(\QQ)_{H\leq B}).\]
  The set
  \[V_T=\bigcup_{P\in T}V_P(\QQ)\]
  is itself a thin subset of $V(\QQ)$. Conjecturally we may hope that
  \[\card(V(\QQ)\setminus V_T)_{H\leq B}\sim C_H(V)B\log(B)\]
  as~$B$ goes to infinity.
  In other words, the points on the complement of the accumulating
  subset should behave as expected. We shall explain below how
  a result of this kind was proven by C. Le Rudulier for a Hilbert scheme
  of the projective plane \cite{rudulier:hilbert}.
  More recently, T.~Browning and D.R.~Heath-Brown
  \cite{browningheathbrown:quadric} proved that
  for the hypersurface of $\PP^3_\QQ\times\PP^3_\QQ$ defined
  by the equation
  \[\sum_{i=0}^3X_iY_i^2\]
  the number of points on the complement of an accumulating
  thin subset behaves as expected.
  \remark
  The work of B.~Lehmann, S.~Tanimoto and
  Y.~Tschinkel~\cite{lehmanntanimototschinkel} shows how common varieties with
  accumulating thin subsets probably are.
  \remark
  We may assume that $\varphi$ is a proper morphism. Then
  $\varphi (X(\Adeles_\KK))\subset V(\Adeles_\KK)$ is a closed subset.
  Under mild hypotheses, T.~Browning and D.~Loughran proved
  in \cite{browningloughran:toomany}
  that
  \[\mmu_V(\varphi(X(\Adeles_\KK)))=0.\]
  Thus the existence of such a thin subset with a positive contribution
  to the asymptotic number of points is an obstruction to the
  global equidistribution of points.
\end{listrems}
\subsubsection{The example of C.~Le Rudulier}
C.~Le Rudulier considers the Hilbert scheme $V$ which parametrizes
the points of degree~$2$ in $\PP^2_\QQ$. To describe this scheme,
let us consider the sheme $Y$ defined as the second symmetric
product of $\PP^2_\QQ$:
\[Y=\Sym^2(\PP^2_\QQ)=(\PP^2_\QQ)^2/\mathfrak S_2.\]
More precisely, we may define it as the projective scheme
associated to the ring of invariant
polynomials $\QQ[X_1,Y_1,Z_1,X_2,Y_2,Z_2]^{\mathfrak S_2}$.
Let us denote by $\Delta_Y$ the image of the diagonal~$\Delta$ in~$Y$.
The scheme $Y$ is singular along this diagonal and $V$ may be
seen as the blowing up of $Y$ along the diagonal $\Delta_Y$.
From this point of view, the variety~$V$ is a desingularization
of~$Y$. Let define $P$ as the blowing up of $(\PP^2_\QQ)^2$ along
the diagonal.
We get a cartesian square
\[\xymatrix@-1.75pc{P\ar[rr]\ar[dd]_{\tilde\pi}&&(\PP^2_\QQ)^2\ar[dd]^\pi\\
&\ \rlap{$\square$}&\\
V\ar[rr]_b&&Y}\]
We put $\Delta_V=b^{-1}(\Delta_Y)$ and $U_0=V\setminus\Delta_V$.
then the set $T=\tilde\pi(P(\QQ))\cap U_0(\QQ)$ is a Zariski dense
thin accumulating subset. More precisely, C.~Le Rudulier proves
the following theorem:
\begin{theo}[C.~Le Rudulier]
  \textup{a)}
  Asymptotically the points of~$T$ give a positive contribution to the
  total number of points:
  \[\CDat\frac{\card T_{H\leq B}}{\card U_0(\QQ)_{H\leq B}}
  @>>B\to+\infty>c\]
  for a real number $c>0$. But for any strictly closed subset
  $F\subset V$, one has
  \[\card(F(\QQ)\cap T)_{H\leq B}=o(U_0(\QQ)_{H\leq B}).\]
  \par
  \textup{b)}
  On the complement of~$T$, one has
  \[\card(U_0(\QQ)\setminus T)_{H\leq B}\sim C(V)B\log(B)\]
  as $B\to +\infty$.
\end{theo}
\begin{listrems}
  \remark
  It follows from this theorem that the set $T$ is a thin
  subset which is not the union of accumulating subvarieties
  but which gives a positive contribution to the total number of points
  of bounded height on the variety.
  In the adelic space the closure of the points
  of $T$ are contained in a closed subset~$F$ with a volume
  $\mmu_V(F)=0$. Therefore this thin accumulating subset is an obstruction
  to the equidistribution of the points on~$V$.
  \remark
  Hopefully, in general, if $\omega_V^{-1}$ is ``big enough'',
  there should be a natural ``small'' subset $T$ such that the points of
  bounded height on $W=V(\KK)\setminus T$ should behave as expected.
  The problem is to describe this subset~$T$.
  \remark
  In these notes, so far, we did not go into the distribution
  of the rational points of bounded height for a height associated
  to an ample line with a class which is not a multiple of $\omega_V^{-1}$.
  The description in that case requires to introduce more complicated
  measures and we refer the interested reader to the work of V.~V. batyrev
  and Y. Tschinkel (see \cite{batyrevtschinkel:tamagawa}).
\end{listrems}
\section{All the heights}\label{section:allheights}
\subsection{Heights systems}
A natural approach to select the points we wish to keep is
to introduce more invariants. The rest of this lecture is devoted
to such invariants. Let us start by considering other heights.
Traditionally, most authors in arithmetic geometry consider
only one height given by a given ample line bundle. However
there are no reason to do so, and we may consider the whole
information given by heights. In order to do this, let
us introduce the notion of family of heights.
\begin{defi}
  Let $L$ and $L'$ be adelically normed line bundles
  on a nice variety~$X$. Let $(\Vert\cdot\Vert_w)_{w\in\Val(\KK)}$
  be the adelic norm on~$L$.
  We say that~$L$ and~$L'$ are \emph{equivalent}%
  \index{Equivalence of>adelically normed line bundle}
  if there is an integer $M>0$, a family $(\lambda_w)_{w\in\Val(\KK)}$ in
  $\RR_{>0}^{(\Val(\KK))}$, such that its support
  $\{\,w\in\Val(\KK)\mid \lambda_w\neq 1\,\}$ is finite
  and $\prod_{w\in\Val(\KK)}\lambda_w=1$, and an isomorphism
  from the line bundle $L^{\otimes M}$ equipped with the adelic
  norm $(\lambda_w{\Vert\cdot\Vert}^{\otimes M}_w)_{w\in\Val(\KK)}$ to
  the adelically normed line bundle ${L'}^{\otimes M}$. We denote by
  $\mathcal H(X)$ the set of equivalence classes of adelically
  normed line bundles. It has a structure of group induced
  by the tensor product of line bundles, we call this group
  the group of \emph{Arakelov heights on $X$}.%
  \index{Arakelov height}%
  \index{Height>Arakelov=(Arakelov ---)}%
  \glossary{$\mathcal H(X)$: Arakelov heights.}
\end{defi}
\begin{rema}
  The height introduced in definition~\ref{defi:arakelovheight}
  depends only on the equivalence class of the adelically normed
  line bundle $\det(E)$. From that point of view, the group
  $\mathcal H(X)$ does parametrize the heights on~$X$.
  If $X$ satisfies weak approximation and has an adelic
  point, then two distinct elements
  of $\mathcal H(X)$ define heights which differ at least at one rational
  point.
\end{rema}
\begin{exam}
  If $V$ is the spectrum of a point, then the height defines an isomorphism
  from $\mathcal H(V)$ to $\RR_{>0}$. Indeed, it is surjective
  and if we take a representative~$L$
  of an element of $\mathcal H(\Spec(\KK))$ of height~$1$, then let~$y$
  be an nonzero element of~$L$. The unique morphism of vector spaces
  from~$\KK$ to~$L$ which maps~$1$ to~$y$ then induces
  an isomorphism from~$\KK$ equipped with the adelic norm
  $(\Vert y\Vert_w\,|\cdot|_w)_{w\in\Val(\KK)}$ to~$L$.
\end{exam}
\begin{defi}
  \label{defi:systemofheights}
  A \emph{system of Arakelov heights}%
  \index{System of Arakelov heights} on our nice variety~$V$
  is a section $\boldsymbol s$
  of the forgetful morphism of groups
  \[\boldsymbol o:\mathcal H(V)\longrightarrow \Pic(V).\]
  Such a system defines a map
  \[\multih:V(\KK)\to\Pic(V)_\RR\dual\]
  constructed as follows: for any $P\in V(\KK)$ and any $L\in\Pic(V)$,
  the real number
  $\multih(P)(L)$ is the logarithmic height of the point~$P$
  relative to the Arakelov height $\boldsymbol s(L)$.
  We shall call $\multih(P)$ the \emph{multiheight}\index{Multiheight}
  of the point~$P$.
  By abuse of language, a function
  of the form $P\mapsto \exp(\langle u,\multih(P)\rangle)$
  for some $u\in\Pic(V)_\RR$ will also be called an exponential height
  on~$V$.
\end{defi}
Let us fix a system of Arakelov heights on our nice variety~$V$.
We still assume that~$V$
satisfies the hypotheses~\ref{hypos:conditions}.
Then one can study the multiheights of rational points.
\begin{lemm}
  Under the hypotheses~\ref{hypos:conditions},
  there is a dense open subset~$U$ of~$V$ and an element
  $c\in\Pic(V)_\RR\dual$ such
  that
  \[\forall P\in U(\KK),\quad\multih(P)\in c+\Ceffdof V.\]
\end{lemm}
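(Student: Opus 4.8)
The plan is to reduce the statement to a finite set of generators of the effective cone and to the finitely generated Picard group, using the hypotheses~\ref{hypos:conditions}. First I would choose a finite family $D_1,\dots,D_r$ of effective divisors on $\overline V$ whose classes generate $\Ceffof{\overline V}$, which is possible by hypothesis~\ref{hypos:conditions}~(v); after replacing $\KK$ by a finite extension and then descending (or simply noting that a Galois-stable set of such divisors still generates the cone), we may assume the $D_i$ are defined over $\KK$. Let $U$ be the dense open subset of $V$ obtained by removing the supports of all the $D_i$, together with any finite set of bad loci needed below. The key elementary observation is that for an effective divisor $D$ with a global section $s$ vanishing exactly on $\mathrm{Supp}(D)$, the logarithmic height $h_D(P)=-\sum_w\log\Vert s(P)\Vert_w$ of a point $P\notin\mathrm{Supp}(D)$ is bounded below by a constant: at almost all places $\Vert s(P)\Vert_w\le 1$ by the integral model condition in the definition of adelic norm, and at the finitely many remaining places $\Vert s(P)\Vert_w$ is bounded above on the compact set $V(\KK_w)$. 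This gives, for each $i$, a constant $c_i$ with $\multih(P)(D_i)\ge c_i$ for all $P\in U(\KK)$.

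Next I would package these inequalities into a single statement in $\Pic(V)_\RR\dual$. Since $\Pic(\overline V)$ is torsion-free and finitely generated (hypothesis~\ref{hypos:conditions}, using the exponential sequence as in the remark after the Brauer--Manin construction), $\Pic(V)$ is a finitely generated free abelian group, and $\Ceffof V$ is spanned by the images of the $[D_i]$. Choose $c\in\Pic(V)_\RR\dual$ with $\langle c,[D_i]\rangle\le c_i$ for every $i$; for instance one can take $c$ to be any functional dominated coordinatewise by the $c_i$ after fixing a basis, or more intrinsically solve the finite linear system since the $[D_i]$ span $\Ceffof V$. Then for $P\in U(\KK)$ and any effective class written as $x=\sum_i \lambda_i[D_i]$ with $\lambda_i\ge 0$, linearity of $L\mapsto\multih(P)(L)$ gives
\[
\multih(P)(x)-\langle c,x\rangle=\sum_i\lambda_i\bigl(\multih(P)(D_i)-\langle c,[D_i]\rangle\bigr)\ge\sum_i\lambda_i\,(c_i-c_i)=0,
\]
hence $\multih(P)-c$ is nonnegative on $\Ceffof V$, i.e.\ $\multih(P)\in c+\Ceffdof V$.

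The main obstacle is the lower bound $h_D(P)\ge c_i$ for effective divisors, or rather making sure it is uniform over all of $U(\KK)$: the archimedean and bad-reduction places are handled by compactness of $V(\KK_w)$ and continuity of $\Vert\cdot\Vert_w$, but one must check that at the good places the integral model really forces $\Vert s(P)\Vert_w\le 1$ for $P$ with integral reduction — this is exactly condition~(iv) of the definition of a classical adelic norm applied to the line bundle $\mathcal O(D)$ with its section $s$, after enlarging the finite set $S$ to include the places of bad reduction of the chosen models. A secondary point requiring care is the descent of the divisors $D_i$ to $\KK$: if one cannot descend individual divisors, one replaces each $[D_i]$ by the sum over its Galois orbit, which is defined over $\KK$, still effective, and the resulting finite set of $\KK$-rational effective classes still generates $\Ceffof V$ because $\Ceffof{\overline V}$ is Galois-stable and its $\KK$-rational points span it. Once these points are in place the rest is the linear-algebra bookkeeping above.
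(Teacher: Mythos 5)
Your proposal is correct and follows essentially the same route as the paper: bound $\multih(P)(D_i)$ from below for a finite set of effective generators of the cone, using that section norms are $\leq 1$ at almost all places by the integral model and bounded at the remaining places by compactness, then conclude by linearity. The only differences are cosmetic — the paper uses a full basis of $\Gamma(V,L_i)$ and removes only the base locus rather than the whole support, and it leaves the Galois descent of the generators and the final cone bookkeeping implicit, both of which you spell out.
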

\begin{proof}
  Let $L_1,\dots,L_m$ be line bundles the classes of which
  generate the effective cone in $\Pic(V)_\RR$.
  We may assume that they have nonzero sections.
  Let $U$ be the complement of the base loci of these line
  bundles. Let $i\in\{1,\dots,m\}$.
  Then choosing a basis $(s_0,\dots,s_{N_i})$ of the space
  of sections of the line bundle $L_i$, we get a morphism
  from $U$ to a projective space $\PP_\KK^{N_i}$.
  For any place $w$, there exist a constant $c_w$ such that
  $\Vert s_j(x)\Vert_w\leq c_w$ for any $x\in V(\KK_w)$ and any
  $j\in\{0,\dots,N_i\}$. Moreover we may take $c_w=1$ outside a finite
  set of places. Therefore there exists a constant $C$ such that
  for any $x\in U(\KK)$ there is an $j\in\{0,\dots,N_i\}$ with
  \[0<\prod_{w\in\Val(\KK)}\Vert s_j(x)\Vert\leq C.\]
  It follows that there exists a constant $c_i\in\RR$ such that
  $h_i(P)\geq c_i$ for any $P\in U(\KK)$. The statement of
  the lemma follows.
\end{proof}
\begin{rema}
  Let $\Ceffdoof{V}$ be the interior
  of the dual cone $\Ceffdof{V}$.
  This lemma shows that it is quite natural to count the number of rational
  points in $V(K)$ such that $\multih(P)\in \mathcal D_B$
  for some compact domain $\mathcal D_B\subset\Ceffdoof{V}$ depending on
  a parameter $B\in\RRpp$. In the following,
  we shall consider domains of the form
  \[\mathcal D_B=\mathcal D_1+\log(B)u\]
  where $u\in\Ceffdoof{V}$ and $\mathcal D_1$ is a compact
  polyhedron in $\Pic(V)_\RR\dual$. In other words,
  we get a finite number of conditions of
  the form
  \[aB\leq H(P)\leq bB\]
  where $H$ is an exponential height on~$V$, in the sense
  of definition~\ref{defi:systemofheights}, and $a,b\in\RRpp$.
\end{rema}
\begin{nota}
  We define the measure $\nu$ on $\Pic(V)_\RR\dual$ as follows:
  for a compact subset $\mathcal D$ of $\Pic(V)_\RR\dual$,
  \[\nu(\mathcal D)=\int_{\mathcal D}e^{\langle\omega_V^{-1},
    \boldsymbol y\rangle}\Haar {\boldsymbol y},\]
  where the Haar measure $\Haar {\boldsymbol y}$
  on $\Pic(V)_\RR\dual$ is normalised so that
  the covolume of the dual of the Picard group is one.
  \par
  For any domain $\mathcal D\subset\Pic(V)_\RR\dual$, we define
  \[V(\KK)_{\multih\in\mathcal D}=\{\,P\in V(\KK)\mid
  \multih(P)\in \mathcal D\,\}\]
\end{nota}
With these notations, we may ask the following question:
\begin{question}\label{question:allheights}
  We assume that our nice variety~$V$ satisfies the conditions
  of the hypothesis~\ref{hypos:conditions}. Let $\mathcal D_1$
  be a compact polyhedron of $\Pic(V)_\RR\dual$ and $u$
  be an element of the open cone $\Ceffdoof{V}$. For a real number $B> 1$,
  let $\mathcal D_B=\mathcal D_1+\log(B)u$.
  Can we find a ``small'' subset~$T$ so that we have an equivalence of the form
  \begin{equation}
    \label{equ:estimateallheights}
    \card (V(\KK)-T)_{h\in\mathcal D_B}\sim \beta(V)\nu(\mathcal D_1)
      \oomega_V(V(\Adeles_\KK)^{\Br})B^{\langle\omega_V^{-1},u\rangle}
  \end{equation}
  as $B$ goes to infinity?
\end{question}
\begin{listrems}
  \remark\label{remark:db}
  One may note that in the right hand side of~\eqref{equ:estimateallheights},
  one may use $\nu(\mathcal D_B)$ =
  $\nu(\mathcal D_1)B^{\langle\omega_V^{-1},u\rangle}$.
  \remark
  One can easily imagine variants of this question. For example,
  some methods from analytic number theory give much better error
  terms if ones use smooth functions instead of characteristic functions
  of sets. So it would be natural to consider a smooth
  function $\varphi:\Pic(V)\dual_\RR\to\RR$
  with compact support and ask whether we have
  \[\sum_{P\in V(\KK)}\varphi(\multih(P)-Bu)\sim
  \beta(V)\int_{\Pic(V)_\RR\dual}\varphi
  \Haar{\nu}\oomega_V(V(\Adeles_\KK)^{\Br})B^{\langle\omega_V^{-1},u\rangle}\]
  as $B$ goes to infinity.
  \remark
  Let us compare formula~\eqref{equ:estimateallheights} with
  formula~\eqref{equ:batyrevmanin}. First we may note that
  \[\nu(\{y\in\Ceffdof V\mid\langle y,\omega_V^{-1}\rangle\leq B\})
  \sim \alpha(V)B\log(B)^{t-1}\]
  Thus using remark~\ref{remark:db},
  formula~\eqref{equ:batyrevmanin} may be seen as integrating
  formula~\eqref{equ:estimateallheights} over
  \[\mathcal D_B=\{y\in\Ceffdof V\mid\langle y,\omega_V^{-1}\rangle\leq 1\}.\]
\end{listrems}
In this context in which we consider all the possible
heights, we may consider again the question of the
global equidistribution.
\begin{namedtheorem}[Global equidistribution]%
  \label{globalequidistribution:allheights}
  We shall say that the \emph{global equidistribution holds for $\multih$}%
  \index{Global equidistribution>for systems of heights}%
  \index{Equidistribution>Global=(Global ---)}
  if, for any compact polyhedron~$\mathcal D$ in
  $\Pic(V)_\RR\dual$ and any~$u$ in the open cone $\Ceffdoof{V}$,
  the measure $\ddelta_{V(\KK)_{\multih\in\mathcal D_B}}$ converges
  weakly to $\mmu_V^{\Br}$ as $B$ goes to infinity.
\end{namedtheorem}
Note that the expected limit probability measure is the
same as before and does not depend on~$u$.

\subsection{Compatibility with the product}
A positive answer to question~\ref{question:allheights} is
compatible with the product of varieties in the following sense:
\begin{prop}
  Let $V_1$ and $V_2$ be nice varieties equipped with system of heights
  which satisfy the
  conditions~\ref{hypos:conditions}.
  If the sets $V_1(\KK)-T_1$ and $V_2(\KK)-T_2$ satisfy the
  equivalences~\eqref{equ:estimateallheights} for any compact polyhedra,
  then this is also true for the
  product~$(V_1(\KK)-T_1)\times(V_2(\KK)-T_2)$, equipped
  with the induced system of heights.
  \par
  If these varieties satisfy the global
  equidistribution~\ref{globalequidistribution:allheights},
  then so does their product.
\end{prop}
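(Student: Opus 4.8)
The plan is to reduce both assertions to the fact that, under the hypotheses~\ref{hypos:conditions}, every piece of data attached to $V=V_1\times_{\Spec\KK}V_2$ splits as a product over the two factors, $V$ being given the induced system of heights $\boldsymbol s(\pr_1^*L_1\otimes\pr_2^*L_2)=\pr_1^*\boldsymbol s_1(L_1)\otimes\pr_2^*\boldsymbol s_2(L_2)$. First I would record the geometric side. Since $H^1(V_i,\mathcal O_{V_i})=0$, the Picard varieties $\Pic^0(V_i)$ are trivial, so the K\"unneth formula gives $\Pic(V)=\pr_1^*\Pic(V_1)\oplus\pr_2^*\Pic(V_2)$, and likewise $\Pic(\overline V)=\Pic(\overline V_1)\oplus\Pic(\overline V_2)$ as a $\Gal(\overline\KK/\KK)$-module; restricting divisor classes to general fibres of the two projections yields $\Ceffof V=\Ceffof{V_1}\times\Ceffof{V_2}$, hence the same for $\Ceffof{V_i}\dual$ and for $\Ceffdoof{V_i}$. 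Additivity of logarithmic heights under tensor products, together with $h_{\pr_1^*\boldsymbol s_1(L)}(P_1,P_2)=h_{\boldsymbol s_1(L)}(P_1)$, identifies, via $\Pic(V)_\RR\dual=\Pic(V_1)_\RR\dual\times\Pic(V_2)_\RR\dual$, the multiheight on $V$ with $(P_1,P_2)\mapsto(\multih_1(P_1),\multih_2(P_2))$ and $\omega_V^{-1}$ with $(\omega_{V_1}^{-1},\omega_{V_2}^{-1})$, while $V(\Adeles_\KK)=V_1(\Adeles_\KK)\times V_2(\Adeles_\KK)$. A routine K\"unneth check (for $H^i(\mathcal O)$, for $\Pic$, for the effective cone, and for \'etale $H^2$ with $\GG_m$-coefficients, using that the geometric Brauer groups vanish and the geometric Picard groups are torsion free) shows that $V$ again satisfies the conditions~\ref{hypos:conditions}.

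Next I would check that the constants of construction~\ref{cons:tamagawa} are multiplicative. The splitting of $\Pic(\overline V)$ gives $L_S(s,\Pic(\overline V))=L_S(s,\Pic(\overline V_1))\,L_S(s,\Pic(\overline V_2))$ for $S$ large, $t=t_1+t_2$ and $\lambda_w=\lambda_{1,w}\lambda_{2,w}$; the local measure of construction~\ref{cons:measure} attached to the induced norm on $\omega_V^{-1}=\pr_1^*\omega_{V_1}^{-1}\otimes\pr_2^*\omega_{V_2}^{-1}$ is $\oomega_{V_1,w}\otimes\oomega_{V_2,w}$, since in product local coordinates $\Vert\partial_{x_1}\wedge\dots\wedge\partial_{y_n}\Vert_w=\Vert\partial_{x_1}\wedge\dots\Vert_w\,\Vert\partial_{y_1}\wedge\dots\Vert_w$; and $\sqrt{d_\KK}^{\,\dim V}$ factors, so by~\eqref{equ:adelicmeasure} $\oomega_V=\oomega_{V_1}\otimes\oomega_{V_2}$ and $\mmu_V=\mmu_{V_1}\otimes\mmu_{V_2}$. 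Under the hypotheses $\Br(V_i)$ is, modulo $\Br(\KK)$, $H^1(\Gal(\overline\KK/\KK),\Pic(\overline V_i))$, so the K\"unneth splitting makes every Brauer class on $V$ of the form $\pr_1^*\alpha_1+\pr_2^*\alpha_2$ modulo constant algebras; since $\inv_w$ is additive on such a sum, the Brauer--Manin condition decouples and $V(\Adeles_\KK)^{\Br}=V_1(\Adeles_\KK)^{\Br}\times V_2(\Adeles_\KK)^{\Br}$. Hence $\beta(V)=\beta(V_1)\beta(V_2)$, $\oomega_V(V(\Adeles_\KK)^{\Br})=\oomega_{V_1}(V_1(\Adeles_\KK)^{\Br})\,\oomega_{V_2}(V_2(\Adeles_\KK)^{\Br})$ and $\mmu_V^{\Br}=\mmu_{V_1}^{\Br}\otimes\mmu_{V_2}^{\Br}$; finally $\nu=\nu_{V_1}\otimes\nu_{V_2}$ (the Haar measure on $\Pic(V)_\RR\dual$ giving $\Pic(V)\dual$ covolume one is the product of the two analogous ones) and $\langle\omega_V^{-1},(u_1,u_2)\rangle=\langle\omega_{V_1}^{-1},u_1\rangle+\langle\omega_{V_2}^{-1},u_2\rangle$.

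With these identities the counting statement is almost formal. Fix $u=(u_1,u_2)\in\Ceffdoof V$, so $u_i\in\Ceffdoof{V_i}$, and consider first a rectangular domain $\mathcal D_1=\mathcal R_1\times\mathcal R_2$ with each $\mathcal R_i$ a compact polyhedron in $\Pic(V_i)_\RR\dual$. The condition $\multih\in\mathcal D_B$ decouples, so the count of $\bigl((V_1(\KK)-T_1)\times(V_2(\KK)-T_2)\bigr)_{\multih\in\mathcal D_B}$ is the product of the two factor counts; applying the hypothesis to each factor, multiplying, and using the multiplicativities above turns the result into exactly~\eqref{equ:estimateallheights} for $V$. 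Additivity of both sides extends this to finite disjoint unions of rectangles. For an arbitrary compact polyhedron $\mathcal D_1$ I would take a basis of $\Pic(V)\dual$ adapted to the splitting and use a cubical grid to sandwich $\mathcal D_1$ between finite unions of rectangles $\mathcal D^-\subset\mathcal D_1\subset\mathcal D^+$ with $\nu(\mathcal D^+\setminus\mathcal D^-)$ arbitrarily small --- possible because $\partial\mathcal D_1$ is Lebesgue-null and $\nu$ is absolutely continuous with respect to Lebesgue measure; monotonicity of both the counting function and of $\nu$ in the domain then gives~\eqref{equ:estimateallheights} for $\mathcal D_1$ by squeezing. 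This settles the first assertion.

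For the equidistribution statement (definition~\ref{globalequidistribution:allheights}) I would argue as follows. The space $V(\Adeles_\KK)=V_1(\Adeles_\KK)\times V_2(\Adeles_\KK)$ is compact, and by Stone--Weierstrass the linear span of the products $f_1\otimes f_2$ with $f_i\in C(V_i(\Adeles_\KK))$ is dense in $C(V(\Adeles_\KK))$; as the measures $\ddelta_{V(\KK)_{\multih\in\mathcal D_B}}$ are probability measures, it is enough to test convergence against such $f_1\otimes f_2$, the limit being $\int f_1\otimes f_2\,\mmu_V^{\Br}=\bigl(\int f_1\,\mmu_{V_1}^{\Br}\bigr)\bigl(\int f_2\,\mmu_{V_2}^{\Br}\bigr)$. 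For a rectangular domain $\mathcal D_1=\mathcal R_1\times\mathcal R_2$ this is immediate: both the $f_1\otimes f_2$-weighted count and the plain count factor over the two projections, so $\int f_1\otimes f_2\,\ddelta_{V(\KK)_{\multih\in\mathcal D_B}}$ is the product of $\int f_i\,\ddelta_{V_i(\KK)_{\multih_i\in\mathcal R_i+\log(B)u_i}}$, each factor converging by the equidistribution hypothesis for $V_i$; additivity handles finite disjoint unions. For a general polyhedron $\mathcal D_1$ one sandwiches it between finite unions of rectangles as before, but the squeeze now forces one to compare the count of $V(\KK)$ in the boundary layer $\mathcal D^+\setminus\mathcal D^-$ with the count in $\mathcal D^-$, which needs the counting asymptotics of the first assertion for $V(\KK)$ itself --- valid as soon as the $T_i$ are negligible, which is the case under the equidistribution hypothesis for $V_i$ (otherwise the accumulation of $T_i$ on a subset of $\mmu_{V_i}$-measure zero would contradict the equidistribution of $V_i(\KK)$); granting this, the boundary layer contributes a proportion tending to $\nu_V(\mathcal D^+\setminus\mathcal D^-)/\nu_V(\mathcal D_1)\to0$ as the grid refines, and the squeeze goes through. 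I expect this interplay to be the main obstacle: whereas the counting statement squeezes by sheer monotonicity, the equidistribution statement requires controlling the count in a thin boundary layer, so the two halves of the proposition are in effect proved together; a secondary point needing care, though classical, is the K\"unneth description of $\Br(V)$ and the product structure of the Brauer--Manin locus, which uses hypotheses~\ref{hypos:conditions}~(iii)--(iv) in full.
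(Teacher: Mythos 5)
Your proof is correct and follows essentially the same route as the paper: identify $\Pic(V)$, the effective cone and $\omega_V^{-1}$ with the products of the corresponding data on the factors, factor the count over rectangular domains $\mathcal D_{1,1}\times\mathcal D_{2,1}$, extend to arbitrary polyhedra by sandwiching with finite unions of rectangles, and handle equidistribution by testing on product data (you use product test functions via Stone--Weierstrass where the paper uses product open sets with negligible boundary, an inessential variant). You are more explicit than the paper about the multiplicativity of the constants $\beta$, $\oomega_V$, $\nu$ and the Brauer--Manin locus, and about the boundary-layer issue in the equidistribution squeeze, both of which the paper leaves implicit.
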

\begin{proof}
  We put $W_i=V_i(\KK)-T_i$ for $i\in\{1,2\}$.
  Let~$W$ be the product $W_1\times W_2$. For $i\in\{1,2\}$,
  we denote by $\multih_i$ the multiheight on~$V_i$, and fix
  a compact polyhedron $\mathcal D_{i,1}$ in $\Pic(V_i)_\RR\dual$,
  as well as an element $u_i\in\Ceffdoof{V_i}$.
  Let us first note that by \cite[exercise 12.6]{hartshorne:geometry},
  the natural morphism induced by pull-backs $\Pic(V_1)\times\Pic(V_2)
  \to\Pic(V)$ is an isomorphism which maps
  the product $\Ceffof{V_1}\times\Ceffof{V_2}$
  onto $\Ceffof{V}$ and $(\omega_{V_1}^{-1},\omega_{V_2}^{-1})$
  on $\omega_V^{-1}$ (see \cite[exercise 8.3]{hartshorne:geometry}).
  Therefore we identify
  these groups and consider $\mathcal D_1=\mathcal D_{1,1}\times
  \mathcal D_{2,1}$
  as a subset of $\Pic(V)_\RR\dual$ and $u=(u_1,u_2)$ as
  an element of $\Ceffdoof{V}$. If we put
  $\mathcal D_B=\log(B)u+\mathcal D_1$,
  we have
  \[\card W_{\multih\in\mathcal D_B}=
  \card (W_1)_{\multih_1\in\mathcal D_{1,B}}
  \times\card (W_2)_{\multih_2\in\mathcal D_{2,B}}\]
  and the result follows from the compatibility of equivalence
  with products. To extend the result to an arbitrary polyhedra $\mathcal D$,
  we find domains $\mathcal D'$ and $\mathcal D''$ which are finite unions
  of products of polyhedra with disjoint interiors such that
  $\mathcal D'\subset\mathcal D\subset\mathcal D''$ and use the fact that
  the equivalence is valid for such a finite union.

  Similarly for the equidistribution, it is enough to count
  the points in open subsets $U$ of $V(\Adeles_\KK)^{\Br}$ which are
  of the form $U=U_1\times U_2$ for open subsets $U_1$ and $U_2$
  such that $\oomega_{V_1}(\partial U_1)=0$ and $\oomega_{V_2}(\partial U_2)=0$.
  But in that case,
  \[\card(W\cap U)_{\multih\in\mathcal D_B}=
  \card (W_1\cap U_1)_{\multih_1\in\mathcal D_{1,B}}
  \times\card (W_2\cap U_2)_{\multih_2\in\mathcal D_{2,B}}\]
  and we may conclude in the same way.
\end{proof}
It is worthwile to note that this proof is much simpler than the
proof of the compatibility of the principle of Batyrev and Manin for
products (see \cite[\S1.1]{fmt:fano}). It illustrates the fact that in
question~\ref{equ:estimateallheights} we cut out the ``spikes''
where the heights of the components of the points are very different.

\subsection{Lifting to versal torsors}
Following Salberger~\cite{salberger:tamagawa}, we shall
now explain how the question lifts naturally to versal torsors
(see also \cite{peyre:cercle}). Let us start by a quick reminder
on versal torsors. In our setting, the geometric Picard
is supposed to be without torsion, thus we shall
restrict ourselves to torsors under algebraic tori.
\begin{defi}
  Let~$\LL$ be a field and $\LL^s$ be a separable closure
  of~$\LL$. For any scheme~$X$ over~$\LL$, we write
  $X^s$ for the product $X\times_{\Spec(\LL)}\Spec(\LL^s)$.%
  \glossary{$X^s$: Extension of scalars to the separable closure}

  An algebraic group~$G$ over a field~$\LL$ is said to be
  \emph{of multiplicative type}%
  \index{Group>of multiplicative type}
  if there exists an integer~$n$ such that $G^s$ is isomorphic
  to a closed subgroup of $\GG_{m,\LL^s}^n$.
  A \emph{torus}~$\torus$\index{Torus} over~$\LL$
  is an algebraic group~$\torus$
  over~$\LL$ such that~$\torus^s$ is isomorphic to a power $\GG_{m,\LL^s}^n$
  of the multiplicative group.

  The \emph{group of characters}%
  \index{Characters} of an algebraic group~$G$, denoted by $X^*(G_s)$%
  \glossary{$X^*(H_s)$: group of characters}
  is the group of group homomorphisms from~$G_s$
  to $\GG_{m,\LL^s}$. If~$G$ is of multiplicative type,
  it is a finitely generated $\ZZ$-module. If~$G$ is a torus,
  it is a free $\ZZ$-module of rank $n$. In both cases, it is equipped
  with an action of the absolute Galois group of $\LL$,
  that is $\mathcal G_\LL=\Gal(\LL^s/\LL)$, which splits
  over a finite separable extension of $\LL$.

  Conversely, a \emph{Galois module~$L$ over~$\LL$}
  (resp. a \emph{Galois lattice~$L$ over~$\LL$})%
  \index{Galois lattice}%
  \index{Lattice>Galois=(Galois ---)} is
  a finitely generated $\ZZ$-module (resp. a free $\ZZ$-module
  of finite rank) equipped with an action
  of the Galois group $\mathcal G_\LL$ which splits over a finite
  extension. To a Galois module~$L$, we may associate
  the monoid algebra $\LL^s[L]$ and thus
  the algebraic variety
  \[\torus=\Spec(\LL^s[L]^{\mathcal G_\LL})\]
  equipped with the algebraic group structure
  induced by the coproduct $\nabla$ on $\LL^s[L]$ d\'efined by
  $\nabla(\lambda)=\lambda\otimes\lambda$ for any $\lambda\in L$.
  This algebraic group is an algebraic group
  of multiplicative type, which we shall say to be
  \emph{associated to~$L$}.
\end{defi}
\begin{exam}
  As a basic example, the group of characters of $\GG_{m,\LL}^n$
  is $\ZZ^n$ with a trivial action of the Galois group
  and the torus associated with $\ZZ^n$ is isomorphic
  to $\GG_{m,\LL}^n$.
\end{exam}
\begin{rema}
  These construction are functorial and we get a contravariant equivalence
  of categories between the category of tori (resp. groups
  of multiplicative type) over~$\LL$ and the category
  of Galois lattices (resp. Galois modules) over~$\LL$.
\end{rema}
\begin{nota}
  We shall denote by $\TNS$ the torus associated
  to the Galois lattice $\Pic(\overline V)$.
\end{nota}
We are going to use pointed torsors, that is torsors
in the category of pointed schemes.
\begin{defi}
  Let~$G$ be an algebraic group over a field~$\LL$ and let~$X$
  be an algebraic variety over~$\LL$. A \emph{$G$-torsor}~$\torsor$
  over $X$\index{Torsor} is an algebraic variety~$T$ over~$\LL$
  equipped with a faithfully flat morphism
  $\pi:\torsor\to X$ and an action $\mu:G\times \torsor\to \torsor$ of~$G$
  such that $\pi\circ\mu=\pi\circ\pr_2$ and
  the morphism given by $(g,y)\mapsto(gy,y)$ is an isomorphism
  from $G\times \torsor$ to $\torsor\times_X\torsor$.

  A \emph{pointed variety}\index{Pointed>variety}
  over~$\LL$ is a variety~$X$ over~$\LL$
  equipped with a chosen rational point $x\in X(\LL)$
  A \emph{pointed torsor}\index{Pointed>torsor}%
  \index{Torsor>Pointed=(Pointed ---)} over the pointed variety~$X$
  is a torsor~$\torsor$ over~$X$
  equipped with a rational point $t\in \torsor(\LL)$ such that $\pi(t)=x$.
\end{defi}
\begin{exam}
  For any line bundle~$L$ over~$X$, we can define a $\GG_{m,\LL}$ torsor
  by considering~$L^\times$ which is the complement of the zero section
  in~$L$. Conversely for a nice variety~$X$, given a $\GG_m$ torsor~$\torsor$,
  we get a line bundle by considering the contracted product
  $\torsor\times^{\GG_{m,\LL}}\Affine^1_\LL$ which is the quotient
  $(\torsor\times \Affine^1_\LL)/\GG_{m,\LL}$
  where $\GG_{m,\LL}$ acts by $t.(y,a)=(t.y,t^{-1}.a)$.
  We get in that way the equivalence of category between
  the line bundles and the $\GG_{m,\LL}$-torsors over~$X$.
\end{exam}

\subsubsection{Versal and universal torsors}
The versal torsors were introduced by J.-L. Colliot-Th\'el\`ene
and J.-J. Sansuc in the study of the Brauer-Manin obstruction
for Hasse principle and weak approximation (see \cite{cts:predescente1},
\cite{cts:descente1}, and \cite{cts:descente2}) .
For a survey on versal torsors, the reader may also look
at~\cite{peyre:obstruction}.
\par
In topology, universal coverings for an unlaceable pointed space~$X$
answers a universal problem for coverings: it is a
pointed covering $E\to X$ such that for any pointed covering $C\to X$
there exists a unique morphism $E\to C$ of pointed spaces over~$X$
(see \cite{bourbaki:topoalg4}).
We could in fact restrict ourselves to Galois coverings, that is
connected coverings with an automorphism group which acts transitively
on the fibre over the marked point of~$X$. Fixing a point in the space~$X$
is necessary to guarantee the unicity, up to a unique isomorphism,
of the universal covering. The universal torsor is the answer to a similar
problem for torsor under groups of multiplicative type.
\begin{defi}
  Let~$\LL$ be a field and~$\overline \LL$ be an algebraic closure
  of~$\LL$. Let $X$ be a smooth and geometrically integral variety over
  a field~$L$ with a rational point such that all invertible
  functions on $\overline X$ are constant:
  $\Gamma(\overline X,\GG_m)=\overline L^*$.
  We see $X$ as a pointed space by fixing a rational point $x\in X(\LL)$.
  Then a \emph{universal torsor}\index{Universal torsor}%
  \index{Torsor>universal=(Universal ---)} is a pointed torsor~$\torsor_u$
  over the pointed space~$X$ under a group
  of multiplicative type $\torus_u$ such that for any pointed torsor~$\torsor$
  over~$X$ under a group of multiplicative type $\torus$,
  there is a unique morphism of group $\varphi:\torus_u\to\torus$ and a
  unique morphism $\psi:\torsor_u\to \torsor$ over~$X$, compatible
  with the actions of~$\torus_u$ and~$\torus$ and the marked points.
\end{defi}
\begin{listrems}
  \remark
  If such a torsor exists it is obviously unique up to a unique
  isomorphism.
  \remark
  The extension of scalars of a universal torsor is also
  a universal torsor.
  \remark\label{rema:torsor:closure}
  Let us assume that there exists a universal torsor $\torsor_u$.
  Let $x$ be the chosen point of $X$.
  For any line bundle $L$ over $X$, we can consider the
  $\GG_m$-torsor $L^\times$ and fix a point in its fibre
  over~$x$. Thus there exists a unique morphism of pointed
  torsors from $\torsor_u$ to $L^\times$ compatible with a morphism
  $\torus_u\to\GG_m$.
  By duality, it corresponds to a homomorphism of groups
  from $\ZZ$ to the group of characters of $\torus_u$.
  Moreover if $L^{\otimes n}$ is isomorphic to the trivial line bundle,
  the image of $n\in\ZZ$ in $X^*(\torus_u)$ is trivial.
  Therefore, over $\LL^s$, we get a homomorphism of groups
  from $\Pic(X^s)$ to $X^*(\torus_u^s)$, which is compatible with the
  Galois actions.

  Conversely, for any torsor~$\torsor$ under a multiplicative
  group~$\torus$ and any group
  character~${\chi:\torus\to\GG_m}$,
  the contracted product $\torsor\times^\torus\GG_{m,\LL}$
  is a $\GG_m$ torsor over $\LL$. We get a homomorphism of groups
  from $X^*(\torus^s)$ to $\Pic(X^s)$. It is possible to deduce
  from such arguments that the character group of $\torus_u$
  over $\LL^s$ has to be isomorphic to $\Pic(X^s)$.
\end{listrems}
\begin{cons}
  Let us now explain how it is possible to construct such universal torsors.
  We shall assume again hypothesis~\ref{hypos:conditions},
  and fix a rational point $x\in V(\KK)$. In that case the group $\torus_u$
  is canonically isomorphic to the N\'eron-Severi torus $\TNS$.
  Over $\overline \KK$, the construction of remark~\ref{rema:torsor:closure}
  gives
  an isomorphism of $\TNS$-torsor from a universal torsor
  $\overline\torsor_u$ to the product
  $L_1^\times\times_V\dots\times_VL_t^\times$
  where $([L_1],\dots,[L_t])$ is a basis of $\Pic(\overline V)$.
  But the unicity of the universal torsor shows that, by marking
  $\overline\torsor_u$
  with a point in the fibre of $x$, there exists no non-trivial automorphism
  of $\overline\torsor_u$ as a pointed torsor over $X$. By descent theory,
  $\overline\torsor_u$ comes from a unique pointed $\TNS$-torsor
  $\torsor_u$ over~$X$.
\end{cons}
\begin{rema}
  In particular, as a non-pointed $\TNSb$-torsor
  over~$\overline V$, the torsor $\overline\torsor_u$
  does not depend on the choice of the point $x$ in $V(\KK)$.
  This is not true over~$\KK$.
\end{rema}
\begin{defi}
  A \emph{versal torsor}\index{Versal torsor}%
  \index{Torsor>versal=(versal ---)} over~$V$ is a $\KK$-form of
  the $\TNSb$-torsor
  $\overline \torsor_u$.
\end{defi}
\begin{rema}
  The automorphisms of $\overline \torsor_u$ as a $\TNSb$-torsor
  over $\overline V$ are given by the action of $\TNS(\overline \KK)$.
  It follows that if we fix a rational point, and
  therefore a universal torsor $\torsor_u$,
  the versal torsors are classified by the
  group of Galois cohomology $H^1(\KK,\TNS)$ and we get a map
  from $V(\KK)$ to $H^1(\KK,\TNS)$ which maps a point to the
  class of the corresponding universal torsor. In general this cohomology
  group is infinite. But Colliot-Th\'el\`ene and Sansuc
  proved in \cite[proposition 2]{cts:descente1} that the image of the map
  is finite. In other words, there exists a finite family
  $(\torsor_i)_{i\in I}$ of non-isomorphic versal torsors over $V$
  with a rational point such that
  \[V(\KK)=\coprod_{i\in I}\pi_i(\torsor_i(\KK)),\]
  where $\pi_i:\torsor_i\to V$ is the structural morphism.
\end{rema}

\subsubsection{Structures on versal torsors}
Let~$\torsor_u$ be a universal torsor over~$V$.
By definition of the torsors, there is a natural isomorphism
\[\torsor_u\times_V \torsor_u\iso \TNS\times \torsor_u\]
which shows that the pull-back of $\torsor_u$ to $\torsor_u$ is trivial.
But from the universality of~$\torsor_u$
it is possible to show that the pull-back
of any pointed torsor under a group of multiplicative type is trivial
\cite[proposition 2.1.1]{cts:descente2}. By this proposition,
we also have that invertible functions on $\torsor_u$
are constant: $\Gamma(\torsor_u,\GG_m)=\KK^*$.
Moreover, by \cite[lemme 2.1.10]{peyre:cercle},
$\omega_{\torsor_u}$ is isomorphic to the pull-back of $\omega_V$.
We get the following assertion concerning \emph{volume forms}%
\index{Volume form},
that is non-vanishing sections of $\omega_{\torsor_u}$.
\begin{prop}
  Let~$\torsor$ be a versal torsor over~$V$. Then up to multiplication by a constant
  there exists a unique volume form on~$\torsor$.
\end{prop}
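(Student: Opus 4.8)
The plan is to exploit two structural facts established just above: first, that $\omega_{\torsor_u}$ is isomorphic to the pull-back $\pi_u^*\omega_V$ of the canonical bundle of $V$ (via \cite[lemme 2.1.10]{peyre:cercle}), and second, that the only invertible regular functions on $\torsor_u$ are the constants, $\Gamma(\torsor_u,\GG_m)=\KK^*$ (via \cite[proposition 2.1.1]{cts:descente2}). A versal torsor $\torsor$ over $V$ is a $\KK$-form of $\overline\torsor_u$, so after base change to $\overline\KK$ these properties persist: $\omega_{\overline\torsor}\cong\pi^*\omega_{\overline V}$ and $\Gamma(\overline\torsor,\GG_m)=\overline\KK^*$. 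I would first treat existence of a volume form and then uniqueness up to a constant.

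For \emph{existence}, the key point is that any line bundle whose geometric fibres carry a nowhere-vanishing section, and which is geometrically trivial after pull-back, must itself admit a global trivialising section over the base field; concretely, I would argue that $\omega_{\torsor}$ becomes trivial over $\overline\KK$ because $\omega_{\overline\torsor}\cong\pi^*\omega_{\overline V}$ and, again by \cite[proposition 2.1.1]{cts:descente2} (the statement that pull-backs to a universal torsor of torsors under groups of multiplicative type are trivial, applied to the $\GG_m$-torsor attached to $\omega_V$), this pull-back is a trivial line bundle. Hence $\omega_{\torsor}$ is a form of the trivial line bundle, i.e. classified by an element of $H^1(\KK,\GG_m)=\Pic(\KK)$, which vanishes; so $\omega_{\torsor}$ is trivial over $\KK$ and admits a nowhere-vanishing global section, i.e. a volume form.

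For \emph{uniqueness}, suppose $\omega_1,\omega_2$ are two volume forms on $\torsor$. Their ratio $\omega_1/\omega_2$ is a nowhere-vanishing regular function on $\torsor$, hence an element of $\Gamma(\torsor,\GG_m)$. Since $\Gamma(\overline\torsor,\GG_m)=\overline\KK^*$, this ratio is a nonzero scalar after base change, hence lies in $\Gamma(\torsor,\GG_m)\cap\overline\KK^*=\KK^*$; thus $\omega_1=\lambda\,\omega_2$ for some $\lambda\in\KK^*$, which is exactly the asserted uniqueness up to multiplication by a constant. The main obstacle is the bookkeeping in the existence step: one must be careful that the isomorphism $\omega_{\torsor_u}\cong\pi_u^*\omega_V$ cited for the \emph{universal} torsor descends correctly to an arbitrary versal torsor (a $\KK$-form), and that the triviality of $\pi^*\omega_V$ is used in a way compatible with the Galois descent datum defining that form — so that the trivialisation can be chosen over $\KK$ rather than merely over $\overline\KK$. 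Once the vanishing of $\Pic(\Spec\KK)$ is invoked to kill the obstruction class, the argument closes.
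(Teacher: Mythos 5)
Your argument is correct and is exactly the one the paper intends: the proposition is stated as an immediate consequence of the three facts recalled just before it (triviality of pull-backs of torsors under groups of multiplicative type to $\torsor_u$, hence of $\pi^*\omega_V$; the isomorphism $\omega_{\torsor_u}\cong\pi^*\omega_V$; and $\Gamma(\torsor_u,\GG_m)=\KK^*$ for uniqueness), and the paper gives no further details. Your added care about descending the trivialisation from $\overline\KK$ to $\KK$ via Hilbert~90, using $\Gamma(\overline\torsor,\GG_m)=\overline\KK^*$, correctly fills in the step needed to pass from the universal torsor to an arbitrary $\KK$-form.
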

\begin{cons}
  \label{cons:measure.torsor}
  Let~$\torsor$ be a versal torsor on~$V$ with a rational point. 
  By the proposition, we may
  take a non-vanishing section $\omega$ of $\omega_{\torsor}$.
  For any place~$w$ of $\KK$, the expression
  \[\left|\left\langle\omega,\frac\partial{\partial x_1}\wedge
  \frac\partial{\partial x_2}\wedge\dots\wedge
  \frac\partial{\partial x_n}\right\rangle\right|_w
  \Haar{x_{1,w}}\Haar{x_{2,w}}\dots
  \Haar{x_{n,w}},\]
  defines a local measure, which, like in construction~\ref{cons:measure},
  we may patch together to get a measure
  $\oomega_{\torsor_u,w}$ on~$\torsor_u(\KK_w)$.

  We then choose a finite set~$S$ of places containing
  all the places of bad reduction for~$V$, the archimedean places,
  as well as the ramified places in a extension splitting the
  action of the Galois group on the Picard group of~$V$.
  Moreover, we may assume that any isomorphism class of versal torsors
  with a rational point has a model over the ring of~$S$-integers
  $\mathcal O_S$ and that the projection maps $\torsor(\KK_w)\to V(\KK_w)$
  are surjective for $w\not\in S$ (\cite[lemme 3.2.3]{cts:descente2}).
  Let us fix such a model $\mathcal \torsor$ of our
  versal torsor~$\torsor$. Then for any place $w$ outside a finite
  set of places,
  one can prove (see the proof of theorem~\ref{theo.lift.constant} below) that
  \[\oomega_{\torsor,w}(\mathcal \torsor(\mathcal O_w))=
  L_w(1,\Pic(\overline V))^{-1}
  \oomega_{V,w}(V(\KK_w)).\]
  Using the arguments of construction~\ref{cons:tamagawa},
  it follows that we can define the product of the measures
  \[\oomega_{\torsor}=\frac 1{\sqrt{d_\KK}^{\,\dim{\torsor}}}
  \prod_{w\in\Val(\KK)}\oomega_{\torsor,w}.\]
  on the adelic space $\torsor(\Adeles_\KK)$. By the product formula,
  this measure does not change if we multiply $\omega$ by a nonzero
  constant. Thus we may call $\oomega_\torsor$ the \emph{canonical measure}%
  \index{Canonical measure>on versal torsors}
  on the adelic space of the versal torsor $\torsor$.
\end{cons}
\begin{exam}
  For a smooth hypersurface~$V$ of degree~$d$ in $\PP^N_\KK$, with $N\geq 4$,
  any versal torsor is isomorphic to the cone over the hypersurface
  in $\Affine^{N+1}_\KK\setminus\{0\}$, and the canonical measure
  is given by the Leray form. If $F$ is a homogeneous equation for~$V$,
  then locally the measure may be defined as
  \[\oomega_{\torsor,w}=\frac 1{|F(1,x_1,\dots,x_N)|_w}
  \Haar{x_{1,w}}\dots\Haar{x_{N,w}}.\]
\end{exam}
Let us now turn to the lifting of heights to versal torsors.
We have to take into account that the rank of the Picard group
at a place~$w$ depends on~$w$.
\begin{cons}
  We choose a system of representants $(\torsor_i)_{i\in I}$
  of the isomorphism classes of versal torsors over~$V$ which have
  a rational points over $\KK$. For each $i\in I$, we also fix
  a point $y_i\in \torsor_i(\KK)$.
  Let~$\LL$ be a Galois extension of~$\KK$ which splits the Picard
  group of~$V$. Let $\boldsymbol s_\LL:\Pic(V_\LL)\to\mathcal H(V_\LL)$
  be a system of heights over~$\LL$.  We also fix
  a place $w_0$ of $\KK$. Let $i\in I$.
  For any line bundle $L$ over $\Pic(V_\LL)$ there exists
  a morphism $\phi_L:\torsor_i\to L^\times$ over~$V$, which is compatible
  with the character $\chi_L:(\TNS)_\LL\to\GG_{m,\LL}$ defined by $L$.
  This morphism is unique up to multiplication by a constant.
  Let us choose a representant $(\Vert\cdot\Vert_v)_{v\in\Val(\LL)}$
  of $s_\LL([L])$ defining the exponential height $H_\LL$ on $V(\LL)$.
  For any $v\in\Val(\LL)$, we may then consider the map
  from $\torsor_i(\LL_v)$ to $\RR$ given by
  \[y\longmapsto \Vert y\Vert^L_v=
  \begin{cases}
    \frac{\Vert\phi_L(y)\Vert_v}{\Vert\phi_L(y_i)\Vert_v}
    \text{ if }{v\mathrel{\not{\mid}} w_0}\\
    \frac{\Vert\phi_L(y)\Vert_v}{\Vert\phi_L(y_i)\Vert_v}
    H_L(\pi_i(y_i))^{-\frac{[\LL_v:\KK_w]}{[\LL:\KK]}}\text{ otherwise.}
  \end{cases}\]
  This map does not depend on the choice of $\phi_L$ nor on the choice
  of the representant of $s_\LL([L])$ and satisfies
  \[\forall y\in\torsor_i(\LL),\quad H_L(\pi_i(y))=\prod_{v\in\Val(\LL)}
  (\Vert y\Vert_v^L)^{-1}.\]
  Moreover it satisfies the formula
  $\Vert t.y\Vert^L_v=|\chi_L(t)|_v\Vert y\Vert^\LL_v$, for $t\in\TNS(\LL_v)$
  and $y$ in $\torsor_i(\LL_v)$.
  We get a map
  \[\liftmultih_v:\torsor_i(\LL_v)\longrightarrow(\Pic(V_{\LL_v}))\dual_\RR\]
  defined by the relations
  \[\Vert y\Vert_v^L=q_v^{-\bigl\langle\liftmultih_v(y),[L]\bigr\rangle}\]
  for $y\in\torsor_i(\LL_v)$ and $[L]\in\Pic(V_{\LL_v})$, with
  $q_v$ the cardinal of the residue field $\FF_v$ if $v$ is ultrametric,
  $q_v=e$ for a real place and $q_v=e^2$ for a complex one.
  Let us now write $V_w$ for $V_{\KK_w}$. Using the inclusion
  $\torsor_i(\KK_w)\to\prod_{v|w}\torsor_i(\LL_v)$ and the projection
  $\pr:\prod_{v|w}\Pic(V_{L_v})\dual_\RR\to\Pic(V_w)\dual_\RR$,
  we define a map
  \[\liftmultih_w:\torsor_i(\KK_w)\longrightarrow\Pic(V_w)_\RR\dual.\]
  so that the diagram
  \begin{equation}\label{equ.diag.norm}
    \vcenter{
      \hbox{
        \xymatrix{
          \prod_{v|w}\torsor_i(\LL_v)\ar[r]&\prod_{v|w}\Pic(V_{L_v})_\RR\dual
          \ar[d]^{\frac 1{[\LL:\KK]}\pr}\\
          \torsor_i(\KK_w)\ar[u]\ar[r]^{\liftmultih_w}&\Pic(V_{K_w})_\RR\dual
        }
      }
    }
  \end{equation}
  commutes.
  \par
  If $L$ is line bundle over $X$ and if $(\Vert\cdot\Vert_v)_{v\in\Val(\LL)}$
  is an adelic norm for the extension of scalars $L_\LL$, then it induces
  an adelic norm on $L$ defined by
  \[\forall w\in\Val(\KK),\forall y\in L(\KK_w),
  \Vert y\Vert_w=\left(\prod_{v|w}\Vert y\Vert_v
  \right)^{\rlap{$\scriptstyle\frac 1{[\LL:\KK]}$}}.\]
  Therefore the system of heights $\boldsymbol s_\LL$ induces
  a system of heights $\boldsymbol s:\Pic(V)\to\mathcal H(V)$.
  For any point $y\in\torsor_i(\KK)$ we have the formula
  \[\multih(\pi_i(y))=\sum_{w\in\Val(\KK)}\log(q_w)\liftmultih_w(y).\]
\end{cons}
These construction enables us to lift a
system of heights to versal torsors
with a rational point.

\subsubsection{Lifting of the asymptotic formula}
We now wish to express the asymptotic
formula~\eqref{equ:estimateallheights} at the torsor level.
The fibre of the projection map
$\pi_i:\torsor_i(\KK)\to V(\KK)$ is either empty or a principal homogeneous
space under $\TNS(\KK)$. Therefore
we now need to use the description of the rational points of the torus
$\TNS$, as described in the work of Ono (\cite{ono:algebraic} and
\cite{ono:tori}).
\begin{defi}
  Let $\torus$ be an algebraic torus over~$\KK$. We denote
  by $W(\torus)$ the torsion subgroup of $\torus(\KK)$.
  By an abuse of notation,
  for any place~$w$ of $\KK$, we denote by
  $\torus(\mathcal O_w)$ the maximal compact subgroup of
  $\torus(\KK_w)$.
  Let us put $K_\torus=\prod_{w\in\Val(\KK)}\torus(\mathcal O_w)$ which
  is a compact subgroup of $\torus(\Adeles_\KK)$. We also have
  that $W(\torus)=K_\torus\cap \torus(\KK)$
  For any place $w$,
  there is an injective morphism of groups
  \[\log_w:\torus(\KK_w)/\torus(\mathcal O_w)
  \longrightarrow X^*(\torus_w)_\RR\dual\]
  so that for any $t\in \torus(\KK_w)$ and any $\chi\in X^*(\torus_w)$,
  we have $q_w^{\langle\log_w(t),\chi\rangle}=|\chi(t)|_w$.
  For almost all places $w$ the image of $\log_w$ coincide with
  $X^*(\torus_w)\dual$. In fact, by \cite[theorem 4]{ono:algebraic} and
  \cite[\S3]{ono:tori} there exists a finite set of places $S_T$ such that
  the induced map gives an exact sequence
  \begin{equation}
    \label{equ.exact.ono}
    1\longrightarrow \torus(\mathcal O_{S_T})\longrightarrow
    \torus(\KK)\longrightarrow \bigoplus_{w\in\Val(\KK)-{S_T}}X^*(\torus)_w\dual
    \longrightarrow 0
  \end{equation}
  and there is an exact sequence
  \begin{equation}
    \label{equ.exact.ono2}
    {\CDat 1\longrightarrow W(\torus)\longrightarrow \torus(\mathcal O_{S_T})
      @>>\log_{S_T}>\bigoplus_{w\in S_T}X^*(\torus_w)_\RR\dual},
  \end{equation}
  where $\log_{S_T}$ is the map defined by taking $\log_w$
  for $w\in S_T$. For any $w\in S_T$, the extension of scalars
  defines a linear map $\pi_w:X^*(T_w)_\RR\dual\to X^*(T)_\RR\dual$.
  We then consider the linear map $\pi=\sum_{w\in S_T}\log(q_w)\pi_w$:
  \[\bigoplus_{w\in S_\torus}X^*(\torus_w)_\RR\dual
  \longrightarrow X^*(\torus)_\RR\dual.\]
  By the product formula, the image of $\torus(\mathcal O_{S_T})$
  is contained in $\ker(\pi)$. The image $M=\pi(\torus(\mathcal O_{S_\torus}))$
  is a lattice in the $\RR$-vector space $\ker(\pi)$. Let
  $(e_1,\dots,e_m)$ be a basis for this lattice and let
  \[\Delta=\Bigl\{\,\sum_{i=1}^mt_ie_i,\ (t_i)_{1\leq i\leq m}\in
  \leftclose0,1\rightopen^m\,\Bigr\}.\]
  By construction, $\Delta$ is a fundamental domain for the action of
  $\torus(\mathcal O_{S_\torus})$ on $\ker(\pi)$.
\end{defi}

\begin{cons}
  By increasing the finite set of places $S$ introduced in
  construction~\ref{cons:measure},
  we assume that we may take $S_{\TNS}=S$ for the finite set of places
  considered
  in the last definition.
  In particular,
  we get that outside of $S$, the map
  \[\TNS(\KK_w)\longrightarrow \Pic(V_w)\dual\]
  is surjective.
  For each of the chosen torsors we may also fix models $\mathcal \torsor_i$
  over $\mathcal O_S$. 
  We may further assume that, for a family of line bundles
  which generates $\Pic(V_\LL)$ and is invariant under the action
  of the Galois group $\Gal(\LL/\KK)$,
  the heights are given by models of the corresponding line bundles and
  that the maps $\phi_L$ from the chosen versal torsors
  to a line bundle $L$ of the family are defined over
  $\mathcal O_S$. We may also assume that the adelic metrics
  outside~$S$ are compatible with the action of the Galois group.
  For any $i$ in $I$, we define the set
  \[\Delta(\torsor_i)=\{\,y\in \torsor_i(\Adeles_\KK)\mid
  \pr((\liftmultih_w(y_w))_{w\in S})\in \Delta\text{ and }
  \forall w\not\in S,\ y_w\in\mathcal \torsor_i(\mathcal O_w)
  \,\},\]
  where $\pr$ is a linear projection on $\ker(\pi)$.
\end{cons}
\begin{lemm}
  For any place $w\not\in S$, the projection map
  $\mathcal \torsor_i(\mathcal O_w)\to V(\KK_w)$
  is surjective and the map $\liftmultih_w$
  is characterized by the following two conditions:
  \begin{conditions}
  \item We have the relation
    $\liftmultih_w(t.y)= -\log_w(t)+\liftmultih_w(y)$
    for any $t\in\TNS(\KK_w)$ and any $y\in\torsor_i(\KK_w)$;
  \item
    The integral points of $\mathcal \torsor_i$ are given by
    \[\mathcal \torsor_i(\mathcal O_w)=\{\,y\in \torsor_i(\KK_w)
    \mid\liftmultih_w(y)=0\,\}.\]
  \end{conditions}
\end{lemm}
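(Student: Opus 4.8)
The plan is to establish the surjectivity and the two properties of $\liftmultih_w$ in turn, and then to observe that the characterisation is a formal consequence. For the surjectivity I would take the model $\mathcal V$ projective over $\mathcal O_S$, so that the valuative criterion of properness gives $\mathcal V(\mathcal O_w)=V(\KK_w)$ for every $w\notin S$. For such $w$ the model $\mathcal \torsor_i\to\mathcal V$ is smooth (good reduction outside $S$), and each of its fibres over a point of $\mathcal V(\FF_w)$ is a torsor under $\TNS_{\FF_w}$, a torus split over the residue field since $w$ is unramified in $\LL$. By Lang's theorem $H^1(\FF_w,\TNS)=0$, hence that fibre has an $\FF_w$-point, and by smoothness and Hensel's lemma this point lifts through $\mathcal \torsor_i(\mathcal O_w)\to\mathcal V(\mathcal O_w)$; composing, $\mathcal \torsor_i(\mathcal O_w)\to V(\KK_w)$ is onto.

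Property~(i) is the $\KK_w$-level form of the transformation rule $\Vert t.y\Vert_v^L=|\chi_L(t)|_v\Vert y\Vert_v^L$ recorded in the construction. Writing $\Vert y\Vert_v^L=q_v^{-\langle\liftmultih_v(y),[L]\rangle}$ and $|\chi_L(t)|_v=q_v^{\langle\log_v(t),[L]\rangle}$, and using $X^*(\TNS)\cong\Pic(\overline V)$ to match $\chi_L$ with $[L]$, one reads off $\liftmultih_v(t.y)=-\log_v(t)+\liftmultih_v(y)$ for every $v\mid w$; pushing this through the commutative square~\eqref{equ.diag.norm}, together with the corresponding expression of $\log_w$ through the $\log_v$, yields $\liftmultih_w(t.y)=-\log_w(t)+\liftmultih_w(y)$.

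For property~(ii) I would first enlarge $S$ so that, besides the compatibilities already arranged in the preceding construction — namely that outside $S$ the heights of a Galois-stable generating family $(L_j)_{1\le j\le t}$ of $\Pic(V_\LL)$ come from integral models $\mathcal L_j$, that the maps $\phi_{L_j}$ are defined over $\mathcal O_S$ and carry $\mathcal \torsor_i$ into $\mathcal L_j^\times$, and that away from $S$, after extension of scalars to $\LL$, the model $\mathcal \torsor_i$ becomes the fibre product $\mathcal L_1^\times\times_{\mathcal V}\dots\times_{\mathcal V}\mathcal L_t^\times$ — the base point $y_i$ itself lies in $\mathcal \torsor_i(\mathcal O_w)$ for every $w\notin S$, which is legitimate since a fixed rational point is integral away from a finite set of places. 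For $w\notin S$ and $v\mid w$ one has $\Vert z\Vert_v=1$ precisely when $z$ generates the fibre of $\mathcal L_j$ at the underlying point. Hence if $y\in\mathcal \torsor_i(\mathcal O_w)$, both $\phi_{L_j}(y)$ and $\phi_{L_j}(y_i)$ are generators, so $\Vert y\Vert_v^{L_j}=\Vert\phi_{L_j}(y)\Vert_v/\Vert\phi_{L_j}(y_i)\Vert_v=1$ for all $v\mid w$ and all $j$, i.e.\ $\liftmultih_w(y)=0$; conversely $\liftmultih_w(y)=0$ forces, via~\eqref{equ.diag.norm} and the Galois-equivariance of the data outside $S$, that $\Vert y\Vert_v^{L_j}=1$ for all $v\mid w$ and all $j$, so each $\phi_{L_j}(y)$ generates $\mathcal L_j$ at $\pi_i(y)\in\mathcal V(\mathcal O_v)$, and the fibre-product description then shows that $y$ reduces to an $\mathcal O_v$-point of $\mathcal \torsor_i$ for all $v\mid w$, hence lies in $\mathcal \torsor_i(\mathcal O_w)$.

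Finally, for the characterisation, suppose $f$ satisfies (i) and (ii). Given $y\in\torsor_i(\KK_w)$, surjectivity provides $y_0\in\mathcal \torsor_i(\mathcal O_w)$ with $\pi_i(y_0)=\pi_i(y)$, whence $y=t.y_0$ for a unique $t\in\TNS(\KK_w)$; by (ii) $f(y_0)=0=\liftmultih_w(y_0)$, and by (i) $f(y)=-\log_w(t)+f(y_0)=\liftmultih_w(y)$, so $f=\liftmultih_w$. I expect property~(ii) to be the main obstacle: one must unwind the definition of $\liftmultih_w$ down to the $\LL_v$-level, use that the adelic metrics and the trivialisations $\phi_L$ are both integral and Galois-compatible outside $S$, and — the genuinely delicate adjustment — enlarge $S$ so that the normalising points $y_i$ are themselves integral, which is exactly what makes the denominators $\Vert\phi_L(y_i)\Vert_v$ equal to $1$. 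The remaining ingredients (properness, Lang's theorem with Hensel, and the torsor argument for uniqueness) are routine.
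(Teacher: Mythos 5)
Your argument is correct and, for conditions (i), (ii) and the final characterisation, it is essentially an expanded version of the paper's (very compressed) proof: (i) from the transformation rule $\Vert t.y\Vert^L_v=|\chi_L(t)|_v\Vert y\Vert^L_v$ pushed through the diagram~\eqref{equ.diag.norm}, (ii) from compatibility of the norms, the maps $\phi_L$ and the models outside $S$, and the characterisation from surjectivity on integral points plus (i). Your elaboration of (ii) — in particular the observation that one must enlarge $S$ so that the normalising points $y_i$ are integral, which is what makes the denominators $\Vert\phi_L(y_i)\Vert_v$ equal to $1$ — makes explicit a point the paper leaves implicit in ``the choice of $S$''. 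Where you genuinely diverge is the surjectivity of $\mathcal\torsor_i(\mathcal O_w)\to V(\KK_w)$: you prove it directly by properness of $\mathcal V$, Lang's theorem over $\FF_w$ and Hensel's lemma (this is in substance the content of the reference \cite[lemme 3.2.3]{cts:descente2} that the construction invokes), whereas the paper takes the surjectivity of $\torsor_i(\KK_w)\to V(\KK_w)$ as already arranged, notes via~\eqref{equ.diag.norm} that $\liftmultih_w(y)\in\Pic(V_w)\dual$, and uses the surjectivity of $\log_w$ outside $S$ to translate any point of a fibre to one with $\liftmultih_w(y)=0$, which is integral by (ii). Your route is self-contained but imports Lang--Hensel; the paper's is shorter but leans on the prior assumption and on (ii). One small inaccuracy that does not affect the argument: $w$ unramified in $\LL$ does not make $\TNS$ split over the residue field (only split over an extension of it), but Lang's theorem applies to any connected group over a finite field, so $H^1(\FF_w,\TNS)=0$ holds regardless.
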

\begin{proof}
  Relation (i) follows from the formula for $\Vert t.y\Vert^\LL_w$
  and the description in (ii) from the fact that all maps are compatible with
  the models.
  By the choice of~$S$, for any
  place $w\not\in S$, the projection $\pi_i:\torsor_i(\KK_w)\to V(\KK_w)$
  is surjective. Moreover the functions $\Vert\cdot\Vert^\LL_w$
  are compatible with the action of the Galois group.
  By the diagram \eqref{equ.diag.norm}, it follows that
  $\liftmultih_w(y)$ belongs
  to $\Pic(V_w)\dual$. Since the map $\log_w$ is surjective,
  we may find in any fibre an element $y$ such that $\liftmultih_w(y)=0$.
  By (ii), this element is an integral point.
  Since the map $\pi_i:\mathcal\torsor_i(\mathcal O_w)\to V(\KK_w)$
  is surjective, conditions (i) and (ii) characterize $\liftmultih_w$.
\end{proof}
\begin{theo}\label{theo.lift.count}
  The set $\Delta(\torsor_i)\cap \torsor_i(\KK)$ is a fundamental domain
  for the action of $\TNS(\KK)$ modulo $W(\TNS)$. In other words,
  it satisfies the following conditions:
  \begin{conditions}
  \item
    We have $\torsor_i(\KK)=\cup_{t\in\TNS(\KK)}t.
    \bigl(\Delta(\torsor_i)\cap \torsor_i(\KK)\bigr)$;
  \item
    For any $t\in\TNS(\KK)$, we have
    \[\bigl(\Delta(\torsor_i)\cap \torsor_i(\KK)\bigr)
    \cap t.\bigl(\Delta(\torsor_i)\cap \torsor_i(\KK)\bigr)
    \neq\emptyset\]
    if and only if $t\in W(\TNS)$.
  \item
    For $t\in W(\TNS)$, we have
    \[t.\bigl(\Delta(\torsor_i)\cap \torsor_i(\KK)\bigr)
    =\Delta(\torsor_i)\cap \torsor_i(\KK).\]
  \end{conditions}
\end{theo}
\begin{proof}
  Let $y\in\torsor_i(\KK)$. By the lemma, for any $w\not\in S$,
  $\liftmultih_w(y)\in\Pic(V_w)\dual$. Thus, using the exact
  sequence \eqref{equ.exact.ono}, we get an element $t\in\TNS(\KK)$
  such that $t.y\in\mathcal\torsor_i(\mathcal O_w)$ for $w\not\in S$.
  Using the exact sequence \eqref{equ.exact.ono2} and the definition
  of $\Delta$, there is an element~$t'$ in $\TNS(\mathcal O_S)$
  such that $(t't).y\in\Delta(\torsor_i)$. Assertions (ii) and (iii)
  follow from the definition of $\Delta$.
\end{proof}
\begin{nota}
  For any $i\in I$, we define the map
  \[\liftmultih:\Delta(\torsor_i)\longrightarrow \Pic(V)_\RR\dual\]
  by the relation $\liftmultih(y)=\pi\bigl((\liftmultih_w(y_w))_{w\in S}\bigr)$.
\end{nota}
\begin{theo}%
  \label{theo.lift.constant}
  We assume conditions~\ref{hypos:conditions}.
  Let $W$ be a borelian subset of $V(\Adeles_\KK)$. Let $\mathcal D$ be a
  borelian subset of $\Pic(V)_\RR\dual$. Then
  \begin{multline*}
    \beta(V)\nu(\mathcal D)\oomega_V(W\cap V(\Adeles_\KK)^{\Br})\\
    =
    \frac 1{W(\TNS)}
    \sum_{i\in I}\oomega_{\torsor_i}(\{\,y\in\Delta(\torsor_i)
    \cap\pi_i^{-1}(W)\mid
    \liftmultih(y)\in\mathcal D\,\}).
  \end{multline*}
\end{theo}
\begin{proof}
  We will sketch this proof which
  follows the ideas of Salberger~\cite{salberger:tamagawa}.
  If  $(\xi_1,\dots,\xi_r)$ is a basis of $X^*(\TNS)=\Pic(V_\LL)$,
  then $\bigwedge_{i=1}^r\xi^{-1}_i\Haar{\xi_i}$ is a section of
  $\omega_{\TNS}$, which, up to sign, does not depend on the choice
  of the basis. This defines a canonical Haar measure $\oomega_{\TNS,w}$
  on $\TNS(\KK_w)$ for any place~$w$ of~$\KK$.
  Let $w\in\Val(\KK)\setminus S$. Locally for $w$-adic topology,
  we may choose a section of
  $\pi_i:\mathcal\torsor_i(\mathcal O_w)\to V(\KK_w)$ and the measure
  $\oomega_{\torsor_i,w}$ on $\mathcal\torsor_i(\KK_w)$ is locally isomorphic
  to the measure
  \[L_w(1,X^*(\TNS))|\omega_V(t)|_w\oomega_{\TNS,w}
  \times\lambda_w\oomega_{V,w}.\]
  where $\omega_V$ is seen as a character of $\TNS$.
  Let us also consider the groups
  \[\TNS(\Adeles_\KK)^1=\Bigl\{\,(t_w)_{w\in\Val(\KK)}\in\TNS(\Adeles_\KK)
  \Bigm|
  \forall\xi\in X^*(\TNS),\ \prod_{w\in\Val(\KK)}|\xi(t_w)|_w=1\,\Bigr\}\]
  and
  \[\TNS(\KK_S)^1=\Bigl\{\,(t_w)_{w\in S}\in
  \prod_{w\in S}\TNS(\KK_w)
  \Bigm|
  \forall\xi\in X^*(\TNS),\ \prod_{w\in S}|\xi(t_w)|_w=1\,\Bigr\}.\]
  The lattice $X^*(\TNS)\dual$ normalises the Haar measure
  on $X^*(\TNS)_\RR\dual$ and therefore on the quotient
  $\prod_{w\in S}\TNS(\KK_w)/\TNS(\KK_S)^1$. Using
  the measure $\prod_{w\in S}\oomega_{\TNS,w}$ on the product,
  we get a normalised Haar measure $\omega_{T^1}$ on $\TNS(\KK_S)^1$.
  We consider the fibration
  \[\liftmultih\times\pi_i:\prod_{w\in S}\torsor_i(\KK_w)\longrightarrow
  \Pic(V)_\RR\dual\times\prod_{w\in S}V(\KK_w),\]
  which, over its image, is a principal homogeneous space under
  $\TNS(\KK_S)^1$. By choosing a local adequate section of this fibration,
  we get that the measure $\prod_{w\in S}\oomega_{\torsor_i,w}$
  on $\prod_{w\in S}\torsor_i(\KK_w)$ is the measure
  induced by the product measure 
  $\nu\times\prod_{w\in S}\oomega_{V,w}$ on the
  image and the measure $\omega_{T^1}$ on $\TNS(\KK_S)^1$.
  Taking the product over all places, and multiplying by the normalisation
  terms, we get that
  \begin{multline*}
    \frac 1{\card W(\TNS)}
    \oomega_{\torsor_i}(\{\,y\in\Delta(\torsor_i)\cap\pi_i^{-1}(W)\mid
    \liftmultih(y)\in\mathcal D\,\})\\
    =\tau(\TNS)\nu(\mathcal D)
    \oomega_V(\pi_i(\torsor_i(\Adeles_\KK))\cap W),
  \end{multline*}
  where $\tau(\TNS)$ is the Tamagawa number of $\TNS$, that
  is the normalized volume of the compact
  quotient $\TNS(\Adeles_\KK)^1/\TNS(\KK)$ which is isomorphic
  to the product
  \[\TNS(\KK_S)^1/\TNS(\mathcal O_S)\times\prod_{w\not\in S}\TNS(\mathcal O_w).\]
  By Ono's theorem (\cite[\S3]{ono:tamagawa}), the Tamagawa number of $\TNS$
  is given by
  \[\tau(\TNS)=\frac{\card H^1(\KK,X^*(\TNS))}{\card\sha^1(\KK,\TNS)}\]
  where $\sha^1(\KK,\TNS)=\ker(H^1(\KK,\TNS)\to
  \prod_{w\in\Val(\KK)}H^1(\KK_w,\TNS))$. By definition,
  $\beta(V)=\card H^1(\KK,X^*(\TNS))$.
  To conclude the proof, we use the crucial fact,
  first proven by Salberger, that for any $x\in V(\Adeles_\KK)^{\Br}$,
  the number of $i\in I$ such that $x\in \pi_i(\torsor_i(\Adeles_\KK))$
  is precisely equal to $\card\sha^1(\KK,\TNS)$.
\end{proof}
\begin{listrems}
  \remark
  Using theorems~\ref{theo.lift.count} and \ref{theo.lift.constant},
  we see that the equivalence formula~\eqref{equ:estimateallheights}
  of question~\ref{question:allheights}, reduces to an equivalence
  of the form
  \[\card\{\,y\in\torsor_i(\KK)\cap\Delta(\torsor_i)\mid
  \liftmultih(y)\in\mathcal D_B\,\}\sim\oomega_{\torsor_i}(\{\,
  y\in\Delta(\torsor_i)\mid\liftmultih(y)\in\mathcal D_B\,\})\]
  as $B\to +\infty$.
  \remark
  The conditions $y\in\torsor_i(\mathcal O_w)$ for $w\in\Val(\KK)\setminus S$
  correspond to an integrality condition combined with a $\gcd$ condition.
  For example, if $V$ is a smooth complete intersection of dimension $\geq 3$
  in the projective space $\PP^N_\QQ$, then the unique versal torsor $\torsor$
  is the corresponding cone in $\Affine^{N+1}_\QQ\setminus\{0\}$ and
  the condition $(y_0,\dots,y_N)\in\torsor(\ZZ_p)$ corresponds to
  $(y_0,\dots,y_N)\in\ZZ_p^{N+1}$ and $\gcd(y_0,\dots,y_N)=1$.
  Therefore to reduce to counting integral points in a bounded domain,
  the next step is to use a Moebius inversion formula to remove the $\gcd$
  condition. Such an inversion formula is described
  in \cite[\S2.3]{peyre:cercle}.
  \remark In the preceding description, we were not very careful
  about the choice of the finite set~$S$ of bad places. For practical
  reasons, to use this method, it is in fact more efficient to use
  a small set of bad primes.
  \remark The lifting to the versal torsors has been used in many cases,
  see for example~\cite{breteche:cinq} or \cite{bretechebrowningpeyre:chatelet}.
  For practical reasons, it is often simpler to consider an intermediate
  torsor corresponding to the Picard group $\Pic(V)$
  (see for example the work of K.~Destagnol
  \cite{destagnol:chatelet}). The main difference in
  the new approach described in this section
  is that the domain obtained after lifting does not have
  ``spikes''. In other words, the area of the boundary has a smaller rate
  of growth, which should remove some of the problems encountered when using
  a single height relative to the anticanonical line bundle.
\end{listrems}

\subsection{Varieties of Picard rank one}%
\label{subsection:rankone}
If the rank of $\Pic(V)$ is one, then without loss of generality
formula~\eqref{equ:estimateallheights} is reduced to estimating
a difference of the form
\begin{equation}\label{equ:rankonecase}
  \card (V(\KK)-T)_{H\leq bB}-\card (V(\KK)-T)_{H\leq aB}
\end{equation}
as $B$ goes to infinity, where~$H$ is a height relative to the
anticanonical line bundle and $a,b$ are real numbers with $0<a<b$.
Therefore, in that case, a positive answer to
question~\ref{question:allheights} is true if the principle
of Batyrev and Manin is valid for~$V(\KK)-T$. Similarly the global
equidistribution in the sense of~\ref{globalequidistribution:allheights},
follows from global equidistribution~\ref{globalequidistribution}.
However the knowledge of estimates for the difference~\eqref{equ:rankonecase}
does not gives an estimate for $(V(\KK)-T)_{H\leq B}$, unless
we have a uniform upper bound for the error term.

But several examples of Fano varieties of Picard rank one with acccumulating
subvarieties are known in dimension $\geq 3$ (see the list
given in \cite{browningloughran:toomany}). For example, if we consider
a cubic volume, the projective lines it contains are parametrized by the Fano
surface, which is of general type. Each of these rational lines has
degree 2 and as we shall explain in section~\ref{subsubsection:lines},
these lines give a non negligible contribution to the total number of points
thus contradicting the global equidistribution.
In the case of a smooth complete intersection of two quadrics in $\PP^5$,
the situation is even worse since the projective lines it contains
may be Zariski dense.

This shows that in higher dimension, even in the case of varieties with a
Picard
group of rank one, there might be accumulating subvarieties of codimension
$\geq 2$ which are not detected by heights or line bundles.
Thus one needs to go beyond heights. To help us in that direction
we shall first consider the geometric analogue of this problem.

\section{Geometric analogue}%
\label{section:geometricanalogue}
The geometric analogue of the study of rational points of bounded height is
the study of rational curves of bounded degree. This is a very active subject
in algebraic geometry, and we are going to give a very superficial survey
of some particular aspects of this subject in this section.
In fact, there is a very classical dictionary between number fields,
global fields of positive characteristic and function fields of curves.
To simplify the description, we shall mostly restrict ourselves to morphisms
from $\PP^1_k$ to a variety~$V$ defined over~$k$.

\begin{notas}
  Let~$k$ be a field and let $\mathcal C$ be a smooth geometrically integral
  projective curve over $k$. In this section, we denote
  by $\KK=k(\mathcal C)$ the
  function field of~$\mathcal C$. Let~$V$ be a nice variety over~$k$.
  The image of the generic point gives a bijection between
  the set of rational point $V(\KK)$ and the set of morphisms
  $f:\mathcal C\to V$. From now on, we shall identify these sets.
  Let $f:\mathcal C\to V$ be a point of this space. Then
  the pull-back map is a morphism of groups $f^*:\Pic(V)\to\Pic(\mathcal C)$.
  The composition $\deg\circ f^*$ is an element of $\Pic(V)\dual$,
  which we call the \emph{multidegree of~$f$}\index{Multidegree}
  and denote by $\multideg(f)$.
  \par
  The constructions of Grothendieck \cite{grothendieck:hilbert} prove
  that for any $d\in\Pic(V)\dual$, there exists a variety
  $\HHom^d(\mathcal C,V)$ defined over~$k$, which parametrizes the
  morphisms from $\mathcal C$ to $V$ of multidegree $d$.
\end{notas}
In that geometric setting, we want to describe \emph{asymptotically} the
geometric properties of the variety $\HHom^d(\mathcal C,V)$ as the distance
from $d$ to the boundary of the dual of the effective cone goes to infinity.
The problem is to give a framework for the asymptotic study of a variety.
We shall
use the framework given by the ring of integration which was introduced
by Kontsevich (see also \cite{denefloeser:germs}).

\subsection{The ring of motivic integration}
Of course, the dimension of the variety $\HHom^d(\mathcal C,V)$
goes to infinity as the multidegree $d$ grows. But,
as suggested by the work of J.~Ellenberg, we could consider the stabilisation
of cohomology groups. The ring of motivic integration enables us
to consider the limit of a class associated to the variety.

\begin{cons}
  We denote by $\mM_k$ the Grothendieck ring of varieties over~$k$:
  as a group it is generated by the isomorphism classes of varieties
  over~$k$, where the class of a variety~$V$ is denoted by~$[V]$,
  with the relations
  \[[V]=[F]+[U]\]
  for any closed subvariety~$F$ of~$V$, with $U=V\setminus F$.
  We can then extend the definition of a class to non reduced schemes.
  Then $\mM_k$ is equipped with the unique ring structure
  such that
  \[[V_1]\times[V_2]=[V_1\times_k V_2],\]
  for any varieties~$V_1$ and~$V_2$ over~$k$.
  We define the tate symbol as $\tate=[\Affine^1_k]$
  and consider the localized ring $\mM_{k,\loc}=\mM_k[\tate^{-1}]$.
  We then introduce a decreasing filtration on this ring where, for $i\in\ZZ$,
  \[F^i\mM_{k,\loc}\] is the subgroup of
  $\mM_{k,\loc}=\mM_k[\tate^{-1}]$ generated by symbols of the form
  $[V]\tate^{-n}$ if $\dim(V)-n\leq -i$.
  We have the inclusion
  \[F^i\mM_{k,\loc}.F^j\mM_{k,\loc}\subset F^{i+j}\mM_{k,\loc},\]
  for $i,j\in\ZZ$.
  Thus the inverse limit $\motivic_k=\varprojlim_i\mM_{k,\loc}/F^i\mM_{k,\loc}$
  comes equipped with a structure of topological ring so that the natural map
  $\mM_{k,\loc}\to\motivic_k$ is a morphism of rings.
\end{cons}
\begin{rema}
  The morphism $\mM_k\to\mM_{k,\loc}$ is not injective
  (see \cite{borisov:grothendieck}),
  so we loose information by looking at classes in $\motivic_k$. 
\end{rema}
With this ring we may formulate the analogue of
question~\ref{question:allheights}:
\begin{question}\label{question:allgeometric}
  We assume that the nice variety~$V$ over~$k$ is rationally connected,
  satisfies conditions (i) and (iii) to (v) of
  hypotheses~\ref{hypos:conditions} and that the rational points over $k(T)$
  are Zariski dense.  Does the symbol
  \[\Bigl[\HHom^d(\PP^1_k,V)\Bigr]\tate^{-\langle\omega_V^{-1},d\rangle}\]
  converges in $\motivic_k$ for $d\in\Pic(V)\dual\cap \Ceffdoof V$
  as $\dist(d,\partial\Ceffdof V)$ goes to infinity and can we interpret
  the limit as some adelic volume?
\end{question}
\subsection{A sandbox example: the projective space}
In the case of the projective space, it turns out that the symbol
in fact stabilizes, and thus converges:
\begin{prop}
  If $d\geq 1$, then
  \[\Bigl[\HHom^d(\PP^1_k,\PP^n_k)\Bigr]\tate^{-(n+1)d}=\frac{\tate^{n+1}-1}{\tate-1}
  (1-\tate^{-n}).\]
\end{prop}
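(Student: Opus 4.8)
The plan is to parametrise morphisms $f:\PP^1_k\to\PP^n_k$ of degree $d$ explicitly and compute the class of the resulting space in the Grothendieck ring. A morphism of degree $d\geq 1$ is given, up to a scalar, by an $(n+1)$-tuple $(f_0,\dots,f_n)$ of homogeneous polynomials of degree $d$ in two variables with no common zero in $\PP^1_{\overline k}$; equivalently, the ideal they generate in $k[S,T]$ contains a power of the irrelevant ideal. So I would set $W_d$ to be the open subvariety of $(\Affine^{d+1}_k)^{n+1}=\Affine^{(n+1)(d+1)}_k$ consisting of tuples of binary forms with no common root, and observe that $\HHom^d(\PP^1_k,\PP^n_k)$ is the quotient $W_d/\GG_m$ by the diagonal scaling action, which is free. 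Thus $[\HHom^d(\PP^1_k,\PP^n_k)]=[W_d]/(\tate-1)$ in $\mM_{k,\loc}$, since the $\GG_m$-bundle is Zariski-locally trivial (one of the coefficients is nonzero on a cover).

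Next I would compute $[W_d]$ by inclusion–exclusion on the resultant, or more cleanly by a fibration argument. The complement of $W_d$ inside $\Affine^{(n+1)(d+1)}_k$ consists of tuples $(f_0,\dots,f_n)$ having a common root; such a tuple can be written as $g\cdot(h_0,\dots,h_n)$ where $g$ is the gcd, a binary form of degree $e\geq 1$, normalised (say monic in one variable on the appropriate chart), and $(h_0,\dots,h_n)$ is a tuple of forms of degree $d-e$ with no common root, i.e. a point of $W_{d-e}$, together with the class of $g$ in the space of degree-$e$ forms modulo scalars, which is $\PP^e$ — but one must be careful that $g$ is determined only up to scalar while the product is not, so the bookkeeping gives $[\Affine^{(n+1)(d+1)}_k]=\sum_{e=0}^{d}[\PP^e\text{-worth of gcd's}]\cdot[W_{d-e}]$ with the convention $W_0$ a point. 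This yields a recursion $\tate^{(n+1)(d+1)}=\sum_{e=0}^{d}\tate^{e}[W_{d-e}]$ (the factor $\tate^e$ coming from the affine space of degree-$e$ monic forms), from which $[W_d]=\tate^{(n+1)(d+1)}-\tate\cdot\tate^{(n+1)d}=\tate^{(n+1)d}(\tate^{n+1}-\tate)$ for $d\geq 1$. Dividing by $\tate-1$ gives $[\HHom^d(\PP^1_k,\PP^n_k)]=\tate^{(n+1)d}\cdot\tate\cdot\frac{\tate^n-1}{\tate-1}$, and multiplying by $\tate^{-(n+1)d}$ leaves $\tate\cdot\frac{\tate^n-1}{\tate-1}$. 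To match the stated right-hand side $\frac{\tate^{n+1}-1}{\tate-1}(1-\tate^{-n})$, note that $\frac{\tate^{n+1}-1}{\tate-1}(1-\tate^{-n})=\frac{(\tate^{n+1}-1)(\tate^n-1)}{\tate^n(\tate-1)}$; after simplification this equals $\tate\cdot\frac{\tate^n-1}{\tate-1}$ as well, so the two expressions agree in $\mM_{k,\loc}$, and in particular the symbol stabilises (is independent of $d$) for $d\geq 1$.

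The main obstacle I expect is making the gcd decomposition rigorous at the level of varieties rather than just point-counts: the assignment $(f_0,\dots,f_n)\mapsto(\gcd, \text{cofactors})$ is not quite a morphism because the gcd of binary forms is only defined up to a scalar and its normalisation (monic in $S$ versus monic in $T$) depends on which chart the common root lies in. The clean fix is to work chart by chart on $\PP^1$, or to replace the naive stratification by the one coming from the discriminant/resultant locus and argue that each stratum is a Zariski-locally-trivial fibration over $W_{d-e}$ with fibre an affine space, so that the class identity holds in $\mM_k$ before localising. An alternative that sidesteps the issue entirely is to compute $[\HHom^d(\PP^1_k,\PP^n_k)]$ over a finite field by counting $\FF_q$-points via the classical formula for the number of base-point-free $(n+1)$-tuples of degree-$d$ forms (which is $q^{(n+1)(d+1)}-q^{(n+1)d+1}$, essentially a Selberg-type sieve), divide by $q-1$, and then invoke the fact that both sides are polynomials in $q=\tate$ to conclude the identity in $\motivic_k$; but since we want an actual identity of classes, the fibration argument is preferable, and I would spell out the monic-normalisation cover carefully as the one genuinely delicate point.
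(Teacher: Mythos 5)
Your overall strategy is the same as the paper's: realise $\HHom^d(\PP^1_k,\PP^n_k)$ as the base of a Zariski-locally trivial $\GG_m$-torsor $W_d$ of coprime tuples, stratify an ambient affine space by the degree of the gcd, and solve the resulting recursion by generating functions. However, the recursion you write down is incorrect, because you mix two incompatible parametrisations, and the error propagates to a final answer that is not equal to the right-hand side of the proposition.

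Concretely: if you work with binary forms, the gcd stratification of $\Affine^{(n+1)(d+1)}$ reads
\[
\tate^{(n+1)(d+1)}-1=\sum_{e=0}^{d}[\PP^e]\,[W_{d-e}],
\]
where the $-1$ removes the zero tuple and $[\PP^e]=\frac{\tate^{e+1}-1}{\tate-1}$ (not $\tate^e$) accounts for the gcd, a nonzero binary form of degree $e$ taken up to scalar. If instead you dehomogenise and work, as the paper does, with one-variable polynomials of maximal degree exactly $d$ and a monic gcd, the recursion reads
\[
\tate^{(n+1)(d+1)}-\tate^{(n+1)d}=\sum_{e=0}^{d}\tate^{e}\,[W_{d-e}].
\]
Either consistent choice yields $[W_d]=(\tate^{n+1}-1)\tate^{(n+1)d}(1-\tate^{-n})$ for $d\geq 1$ and hence the proposition. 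You instead take the full affine space $\tate^{(n+1)(d+1)}$ on the left together with $\tate^{e}$ on the right. A sanity check at $d=0$ already fails: your recursion forces $[W_0]=\tate^{n+1}$, whereas the coprime tuples of constants form $\Affine^{n+1}\setminus\{0\}$, of class $\tate^{n+1}-1$. From the faulty recursion you obtain $[W_d]=\tate^{(n+1)d}(\tate^{n+1}-\tate)$ and $[\HHom^d(\PP^1_k,\PP^n_k)]\tate^{-(n+1)d}=\tate\,\frac{\tate^{n}-1}{\tate-1}$, and the identity you then invoke to reconcile this with $\frac{\tate^{n+1}-1}{\tate-1}(1-\tate^{-n})$ is false: the latter equals $\frac{(\tate^{n+1}-1)(\tate^{n}-1)}{\tate^{n}(\tate-1)}$, which differs from $\frac{\tate(\tate^{n}-1)}{\tate-1}$ by the factor $1-\tate^{-(n+1)}\neq 1$. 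For $n=d=1$ your formula gives $[\HHom^1(\PP^1_k,\PP^1_k)]=\tate^{3}$, whereas this space is the automorphism group of $\PP^1_k$, of class $\tate^{3}-\tate$. So the gap is not merely the delicacy you flag about normalising the gcd chart by chart; the class identity itself must be corrected (to one of the two displayed forms) before the rest of your otherwise sound argument goes through.
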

\begin{proof}[Sketch of the proof]
  In this proof, we shall describe the sets of $k$-points of our varieties
  and gloss over the description of the varieties themselves.
  So if consider the set $W_d(k)$ of $(P_0,\dots,P_n)\in k[T]^{n+1}$
  such that $\gcd_{0\leq i\leq n}(P_i)=1$ and $\max_{0\leq i\leq n}(\deg(P_i))=d$
  then $W_d$ is a $\GG_m$ torsor over the space $\HHom^d(\PP^1_k,\PP^n_k)$
  which is locally trivial for Zariski topology.
  Hence
  \begin{equation}\label{equ:gmfibration}
    (\tate-1)\Bigl[\HHom^d(\PP^1_k,\PP^n_k)\Bigr]=[W_d].
  \end{equation}
  But if we consider the space of $(n+1)$-tuples of polynomials
  $(P_0,\dots,P_n)$ such that $\max_{0\leq i\leq n}(\deg(P_i))=d$,
  then it is naturally isomorphic to
  $\Affine^{(n+1)(d+1)}-\Affine^{(n+1)d}$ and we may decompose it
  as a disjoint union according to the degree of the $\gcd$ of the
  polynomials. The piece corresponding to the families
  with $\deg(\gcd_{0\leq i\leq n}(P_i))=k$ is isomorphic to
  $[W_{d-k}]\times\Affine^k$ where $\Affine^k$ parametrizes the
  $\gcd$ which is a unitary polynomial of degree $k$.
  We get the formula
  \[\tate^{(n+1)(d+1)}-\tate^{(n+1)d}=\sum_{k=0}^d\tate^k[W_{d-k}].\]
  We may introduce formal series in $\motivic_k[[T]]$ to get
  the formula
  \[\sum_{d\geq 0}(\tate^{n+1}-1)\tate^{(n+1)d}T^d=
  \Bigl(\sum_{k\geq 0}\tate^kT^k\Bigr)\Bigl(\sum_{d\geq 0}[W_d]T^d\Bigr).\]
  From which we deduce
  \[\sum_{d\geq 0}[W_d]T^d=
  (1-\tate T)(\tate^{n+1}-1)\sum_{d\geq 0}\tate^{(n+1)d}T^d.\]
  Therefore, if $d\geq 1$, we get
  \[[W_d]=(\tate^{n+1}-1)(\tate^{(n+1)d}-\tate\tate^{(n+1)(d-1)})
  =(\tate^{n+1}-1)\tate^{(n+1)d}(1-\tate^{-n}).\]
  Combining with formula~\eqref{equ:gmfibration} gives the formula
  of the proposition.
\end{proof}
\begin{listrems}
  \remark
  Let us quickly explain how the constant obtained might be interpreted
  as an adelic volume. First, for the projective space
  the $L$ function associated to the Picard group coincide with
  the usual zeta function.
  This has a motivic analogue decribed by M.~Kapranov in
  \cite{kapranov:motivic}:
  \[Z_{\CC(T)}(U)=\sum_{d\geq 0}[(\PP^1_k)^{(d)}]U^d\]
  where $(\PP^1_k)^{(d)}$ is the symmetric product $(\PP^1_k)^d/\mathfrak S_d$
  and is isomorphic to $\PP^d_k$. The parameter $U$ should be understood as
  $\tate^{-s}$. The residue of the zeta function at $s=1$ corresponds to
  \begin{align*}
    \bigl((1-\tate U)Z_{\CC(T)}(U)\bigr)(\tate^{-1})&=
    \Bigl((1-\tate U)
    \sum_{d\geq 0}\frac{\tate^{d+1}-1}{\tate-1}U^d\Bigr)(\tate^{-1})\\
    &=
    \frac1{\tate-1}\left((1-\tate U)\left(\frac\tate{1-\tate U}-
    \frac1{1-U}\right)\right)(\tate^{-1})\\
    &=\frac1{\tate-1}\left(\frac {\tate-1}{1-U}\right)(\tate^{-1})\\
    &=\frac1{1 -\tate^{-1}}.
  \end{align*}
  By translating the formula~\eqref{equ:adelicmeasure},
  the expected constant should formally have the form
  \[C=\frac{\tate^n}{1-\tate^{-1}}
  \prod_{P\in\PP^1_k}(1-\tate^{-\deg(P)})[\PP^n_{\kappa(P)}]\tate^{-n\deg(P)},\]
  where $\LL^{-1}$ plays the r\^ole of the square root of the descriminant.
  The term appearing in the product may be simplified
  as $1-\tate^{-(n+1)\deg(P)}$.
  However this formal constant involves a product over a
  possibly uncountable set $\PP^1_k$.
  Nevertheless, in this very particular case, we may consider
  the \emph{inverse} of this product. Then, we get
  \[\prod_{P\in\PP^1_k}\sum_{m\geq 0}\tate^{-(n+1)m\deg(P)}\]
  If we admit that it makes sense to develop this product,
  we get
  \[\sum_{m\geq 0}\sum_{P\in(\PP^1_m)^{(m)}}\tate^{-(n+1)m}.\]
  But we may now interpret each interior sum
  as a motivic integral and get
  \begin{align*}
    \sum_{m\geq 0}[\PP^m_k]\tate^{-(n+1)m}
    &=\sum_{m\geq 0}\frac{1-\tate^{m+1}}{1-\tate}\tate^{-(n+1)m}\\
    &=\frac 1{1-\tate}\left(\frac1{1-\tate^{-n-1}}-
    \frac\tate{1-\tate^{-n}}\right)\\
    &=\frac 1{1-\tate}\times
    \frac{1-\tate}{(1-\tate^{-n})(1-\tate^{-n-1})}
  \end{align*}
  Finally we get
  \[C=\frac{\tate^{n+1}-1}{\tate-1}(1-\tate^{-n})\]
  as wanted.
  \remark
  This type of result is compatible with products and we get a result
  for products of projective spaces for free. D. Bourqui has more
  general results for toric varieties~\cite{bourqui:motivique}.
  \remark
  M. Bilu in \cite{bilu:eulerproduct} has defined an Euler
  product giving a precise meaning for the expected constant in
  this setting.
\end{listrems}
\subsection{Equidistribution in the geometric setting}
In the geometric setting equidistribution may be described as follows.
\begin{cons}
  Let $\mathcal S$ be a subscheme of dimension~$0$ of $\mathcal C$,
  then we may consider the moduli space $\HHom(\mathcal S,V)$
  which parametrizes the morphisms from $\mathcal S$ to~$V$.
  For any subvariety~$W$ of $\HHom(\mathcal S,V)$, we may then consider
  the set of morphisms $f:\PP^1_k\to V$ of multidegree $d$ such that
  the restriction $f_{|\mathcal S}$ belongs to $W$. This
  is parametrized by a variety $\HHom^d_W(\mathcal C,V)$ contained
  in $\HHom^d(\mathcal C,V)$.
\end{cons}
\begin{namedtheorem}[Na\"ive geometric equidistribution]
  We shall say that \emph{na\"ive equidistribution}
  holds for $V$ if for any subscheme $\mathcal S$ of dimension~$0$
  in~$\mathcal C$ and any subvariety~$W$ of $\HHom(\mathcal S,V)$,
  the symbol
  \[\left(\Bigl[\HHom^d_W(\mathcal C,V)\Bigr]\Bigl[\HHom(\mathcal S,V)\Bigr]-
    \Bigl[\HHom^d(\mathcal C,V)\Bigr][W]\right)\LL^{-\langle\omega_V^{-1},d\rangle}
  \]
  converges to~$0$ in $\motivic_k$ for
  $d\in\Pic(V)\dual\cap \Ceffdoof V$ as
  $\dist(d,\,\partial\Ceffdof V)$ goes to infinity.
\end{namedtheorem}
\begin{rema}
  This statement gives a precise meaning to the idea of a convergence
  \[\frac{\Bigl[\HHom^d_W(\mathcal C,V)\Bigr]}
         {\Bigl[\HHom^d(\mathcal C,V)\Bigr]}
    \longrightarrow
    \frac{[W]}{\Bigl[\HHom(\mathcal S,V)\Bigr]}.
  \]
  
\end{rema}
\subsection{Crash course about obstruction theory}
Obstruction theory gives a sufficient condition for the moduli
spaces to have the expected dimension. Let us give a very short introduction
to these tools, the interested reader may turn to the book of
O.~Debarre \cite{debarre:higher} for a more serious introduction
to this subject.
\par
let $f:\PP^1_k\to V$ be a morphism of multidegree~$d$
then we may consider the tangent
space at~$f$ and the dimension at~$f$. There is a natural isomorphism
\[T_f\HHom^d(\PP^1_k,V)\longiso H^0(\PP^1_k,f^*(T_V))\]
and
\[\dim_f\bigl(\HHom^d(\PP^1_k,V)\bigr)\geq h^0(\PP^1_k,f^*(T_V))-h^1(\PP^1_k,f^*(T_V)).\]
On the other hand, on $\PP^1_k$, any vector bundle splits into a direct
sum of line bundles. In other words, there exists
an isomorphism
\[f^*(TV)\longiso\bigoplus_{i=1}^n\mathcal O_{\PP^1_k}(a_i)\]
with $a_1\geq a_2\geq\dots\geq a_n$ and $(a_1,\dots,a_n)$ is uniquely
determined. If $a_n\geq 0$, then we get that $h^1(\PP^1_k,f^*(TV))=0$
and
\[\dim_f\bigl(\HHom(\PP^1_k,V)\bigr)=h^0(\PP^1_k,f^*(T_V))=
\sum_{i=1}^nh^0(\mathcal O_{\PP^1_k}(a_i))=\sum_{i=1}^na_i+1
=n+\langle d,\omega_V^{-1}\rangle,\]
which is the expected dimension.
Thus a sufficient condition to get the expected dimension is
$a_n\geq 0$.

But let us now add some conditions related to equidistribution.
Let $\mathcal S$ be a subscheme of $\PP^1_k$ of dimension~$0$.
Then $\mathcal S$ corresponds to a divisor $D=\sum_{P\in I}n_PP$ on $\PP^1_k$
and may described as
$\Spec(\times_{P\in I}\mathcal O_{\PP^1_k,P}/\mathfrak m_P^{n_P})$,
where $\mathfrak m_P$ is the maximal ideal of the local ring
$\mathcal O_{\PP^1_k,P}$. Let $s$ be the
degree of $D$, that is $\sum_{P\in I}n_P[\kappa(P):k]$.
Then $\HHom(\mathcal S,V)$ has dimension $ns$;
therefore if we fix $\varphi:\mathcal S\to V$, the expected dimension
of $\HHom^d_{\{\varphi\}}(\PP^1_k,V)$ ought to be
$n(1-s)+\langle d,\omega_V^{-1}\rangle$.
But obstruction theory in that setting relates the deformation at~$f$
to the vector bundle $f^*(TV)\otimes \mathcal O(-D)$ therefore
the sufficient condition for the dimension of the moduli
space $\HHom^d_{\{\varphi\}}(\PP^1_k,V)$ at~$f$ to be the correct one
is $a_n-s\geq 0$. In other words, a sufficient condition for
the dimensions to be the correct ones is to look at the limit as $a_n$
goes to $+\infty$.

One should note that the counter-examples introduced in
section~\ref{subsection:rankone}, like the intersection of
two quadrics, also show
the necessity to go beyond degrees in the geometric setting. 
\section{Slopes \`a la Bost}%
\label{section:slopes}
Following the geometric analogue, we need a notion which
is the arithmetic traduction of very free curves. This analogue,
introduced in \cite{peyre:liberte}, is
given by Arakelov geometry and is based upon the slopes as they
are considered by J.-B. Bost.
\Subsection{Definition}
\subsubsection{Slopes of an adelic vector bundle over
  \texorpdfstring{$\Spec(\KK)$}{Spec(K)}}
The following definition is a variant of the definition described
in another chapter of this volume.
\begin{defi}
  Let~$E$ be a $\KK$-vector space of finite dimension~$n$
  equipped with
  \begin{itemize}
  \item
    A projective $\mathcal O_\KK$-submodule of constant rank~$n$;
  \item
    For any complex place $w\in\Val(\KK)$, a map
    \[\Vert\cdot\Vert_w:E_w=E\otimes_\KK\KK_w\longrightarrow\RRp\]
    such that there exists a positive definite hermitian form $\phi$ on $E_w$
    so that $\Vert y\Vert_w=\phi(y,y)$;
  \item
    For any real place $w\in\Val(\KK)$ a euclidean norm
    \[\Vert\cdot\Vert_w:E_w\longrightarrow\RRp.\]
  \end{itemize}
  Let~$F$ be a vector subspace of $E$. We equip it with
  $\Lambda_F=\Lambda\cap F$ and the restrictions of the norms.
  The \emph{Newton polygon}\index{Newton polygon}, which we denote by
  $\mathcal P(E)$\glossary{$\mathcal P(E)$: Newton polygon} is
  defined as the convex hull of the set of pairs $(\dim(F),\dega(F))$
  where~$F$ describes the set of vector subspaces of~$E$.
\end{defi}
\begin{rema}
  Let us assume that $\KK=\QQ$.
  If we consider the subspaces~$F$ of dimension~$1$, then $\dega(F)$
  is given as $-\log(\Vert y_0\Vert_\infty)$ where $y_0$
  is a generator of $\Lambda\cap F$. Thus we get the points
  $(1,-\log(\Vert y\Vert_\infty))$ where~$y$ goes over the primitive
  elements of the lattice $\Lambda$. In particular, there is an upper
  bound for the possible values of the second coordinate.
  More generally $\mathcal P(E)$ is bounded from above. In the
  drawing~\ref{figure.convex}, we represented how the points
  $(\dim(F),\dega(F))$ and the upper part of the convex hull may look like.
\end{rema}
\begin{figure}[ht]
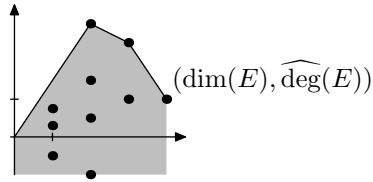

  \centering
  \ifcolorcover
  \includegraphics{peyre_convex-1.mps}
  \else
  \includegraphics{peyre_convex-2.mps}
  \fi
  \caption{Convex hull}
  \label{figure.convex}
\end{figure}
\begin{cons}
  Since the set $\mathcal P(E)$ is bounded from above, we may define
  the function $m_E:[0,n]\to\RR$ by
  \[m_E(x)=\max\{\,y\in\RR\mid (x,y)\in\mathcal P(E)\,\}.\]
  This function is concave and affine in each interval $[i-1,i]$
  for $i\in\{1,\dots,\dim(E)\}$. The slopes of~$E$ are then given
  as
  \[\mu_i(E)=m_E(i)-m_E(i-1)\]
  for $i\in\{1,\dots,\dim(E)\}$.
\end{cons}
\begin{listrems}\label{listrems:slopes}
  \remark
  By construction, we have the inequalities
  \[\mu_1(E)\geq\mu_2(E)\geq\dots\geq\mu_{\dim(E)}(E).\]
  These inequalities might not be strict. Moreover
  \[\dega(E)=\sum_{i=1}^{\dim(E)}\mu_i(E).\]
  Therefore the \emph{slope}\index{Slope} of~$E$,
  which is defined as $\mu(E)=\frac{\dega(E)}{\dim(E)}$ is
  the mean of the slopes:
  \[\mu(E)=\frac1{\dim(E)}\sum_{i=1}^{\dim(E)}\mu_i(E).\]
  \remark
  The value of $m_E(i)$ may differ from
  $\max_{\dim(F)=i}(\dega(F))$. However, following E.~Gaudron
  \cite[definition 5.18]{gaudron:pentes}, we may define the successive
  minima of the arithmetic lattice~$E$ as follows: for
  $i\in\{1,\dots,\dim(E)\}$, the $i$-th minima $\lambda_i(E)$
  is the infimum of the numbers $\theta\in\RRpp$ such that
  there exists a family of strictly positive real numbers
  $(\theta_w)_{w\in\Val(\KK)}$ and a free family $(x_1,\dots,x_i)$ in $E$
  such that
  \begin{conditions}
  \item
    The set $\{\,w\in\Val(\KK)\mid\theta_w\neq 1\,\}$ is finite;
  \item
    The product $\prod_{w\in\Val(\KK)}\theta_w$ is equal to $\theta$;
  \item
    We have the inequalities
    \[\Vert x_j\Vert_w\leq \theta_w\]
    for $j\in \{1,\dots,i\}$ and $w\in\Val(\KK)$.
  \end{conditions}
  Then Minkowski's theorem gives an explicit constant $C_\KK$ such that
  \[0\leq \log(\lambda_i(E))+\mu_i(E)\leq C_\KK\]
  for $i\in\{\,1,\dots,\dim(E)\,\}$.
  \remark
  In this chapter, the slopes are not invariant under field
  extensions since we did not normalise them by $\frac 1{[\KK:\QQ]}$.
  This conforms to the usual convention for heights in Manin's program,
  which has been chosen to get a formulation of
  the expected estimate which does not depend on the degree of the field.
\end{listrems}

\subsubsection{Slopes on varieties, freeness}
We now apply the constructions of last paragraph to vector bundles
on varieties.
\begin{defi}
  Let~$E$ be a vector bundle on the nice variety~$V$ of dimension~$n$.
  We assume that~$E$ is equipped with an adelic norm
  $(\Vert\cdot\Vert_w)_{w\in\Val(\KK)}$ then for any rational
  point $P\in V(\KK)$, the fibre $E_P$ is an adelic vector bundle
  over $\Spec(\KK)$ and we may define
  \[\mu_i^E(P)=\mu_i(E_P).\]
  \par
  In particular, if~$V$ is equipped with an adelic metric, we
  may define the \emph{slopes}%
  \index{Slope>of a rational point}
  of a rational point $P\in V(\KK)$ as
  \[\mu_i(P)=\mu_i(T_PV)\]
  for $i\in\{1,\dots,n\}$.
\end{defi}
\begin{listrems}
  \remark
  From remark \ref{listrems:slopes} (i), we deduce that
  for any rational point $P\in V(\KK)$, we have
  \[\mu_n(P)\leq \mu_{n-1}(P)\leq \dots\leq \mu_1(P)\]
  and $\dega(T_PV)=\sum_{k=1}^n\mu_i(P)$.
  But we may interpret this degree $\dega(T_PV)=\dega((\omega_V^{-1})_P)$
  as the logarithmic height of~$P$, that is $h(P)=\log(H(P))$, where
  the height~$H$ is defined by the induced metric on the anticanonical
  line bundle.
  \remark
  From the previous remark we deduce the inequalities
  \[\mu_n(P)\leq \frac{h(P)}n\leq \mu_1(P)\]
  for any rational point $P\in V(\KK)$.
\end{listrems}
\begin{defi}
  The \emph{freeness}\index{Freeness} of a rational $P\in V(\KK)$
  is defined by
  \[l(P)=
  \begin{cases}
    n\frac{\mu_n(P)}{h(P)}\text{ if $\mu_n(P)>0$},\\
    0\text{ otherwise}.
  \end{cases}
  \]
\end{defi}
\begin{listrems}
  \remark
  By definition the freeness of a point $l(P)$ belongs to
  the interval $[0,1]$.
  \remark
  We have the equality $l(P)=0$ if and only if the minimal slope
  $\mu_n(P)\leq 0$.
  \remark
  The equality $l(P)=1$ occurs if and only if the lattice $T_PV$
  is semi-stable, that is $\mu_1(P)=\dots=\mu_n(P)$. In other words
  this means that $\mu(F)\leq\mu(T_PV)$ for any subspace $F$ of $T_PV$.
  This is, for example, the case if the lattice is the usual lattice
  $\ZZ^n$ in $\RR^n$ equipped with its standard euclidean structure.
  Up to scaling, this occurs for a point $(P,\dots,P)$ on the diagonal
  of $(\PP^1_\KK)^n$. Another example of a semi-stable lattice
  in dimension~$2$ is the classical hexagonal lattice $\ZZ[j]$
  generated by a primitive third root of~$1$, as shown
  in figure~\ref{figure.hexagonal}.
  \begin{figure}[ht]
    \centering
    \includegraphics{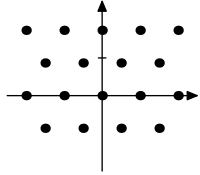}
    \caption{Hexagonal lattice}
    \label{figure.hexagonal}
  \end{figure}
  More generally for two dimensional
  lattices we may consider that $\Lambda$ is
  isomorphic to the lattice $a(\ZZ+\ZZ\tau)\subset\CC$, where
  $\Re(\tau)\in[-1/2,1/2]$, $|\tau|\geq 1$ and $\Im(\tau)>0$.
  Then a lattice is semistable if and only if $\Im(\tau)\leq 1$, which is drawn
  in \ifcolorcover red\else grey\fi\ on figure~\ref{figure.fundamental}.
  \begin{figure}[ht]
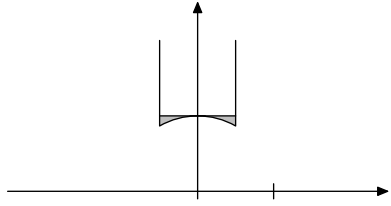

    \centering
    \ifcolorcover
    \includegraphics{peyre_lattices-2.mps}
    \else
    \includegraphics{peyre_lattices-3.mps}
    \fi
    \caption{Semi-stable lattices}
    \label{figure.fundamental}
  \end{figure}
  \remark
  For any rational point on a curve, we have $l(P)=1$.
  \remark\label{rema:slopes.surfaces}
  For a surface~$S$ over~$\QQ$, an adelic metric define
  two invariants, namely
  the height~$H$ and a map $S(\QQ)\to\Poincare/\PSL_2(\ZZ)$,
  where~$\Poincare$ denotes the Poincar\'e half-plane
  $\{\,z\in\CC\mid\Im(z)>0\,\}$ which sends a point $P$ to
  the class of $\tau_P$ such that the lattice in $T_PS$
  is isomorphic to $a_P(\ZZ+\ZZ\tau_P)$. Then the freeness of $P$ is
  given by
  \[l(P)=
  \begin{cases}
    1\text{ if $\Im(\tau_P)\leq 1$},\\
    1-\frac{\log(\Im(\tau_P))}{h(P)}\text{ if $1<\Im(\tau_P)<h(P)$,}\\
    0\text{ otherwise.}
  \end{cases}
  \]
  \remark
  By definition, the freeness $l(P)$ is invariant under field extensions.
  Thus a condition of the form $l(P)>\varepsilon$ does not depend
  on the field of definition and makes sense for algebraic points
  in $V(\overline\KK)$. On the other hand the defining condition
  for a thin subset, namely $P\in\varphi(X(\KK))$
  for a morphism $\varphi$ as in definition~\ref{defi:thin}
  does not make sense for algebraic points.
\end{listrems}

\subsection{Properties}
Let us first describe how the freeness depends on the choice of the metric.
\begin{prop}
  Let $\varphi:E\to F$ be a morphism of vector bundles and let
  ${(\Vert\cdot\Vert_w)_{w\in\Val(\KK)}}$
  (\resp ${(\Vert\cdot\Vert'_w)_{w\in\Val(\KK)}}$) be an adelic norm on~$E$
  (\resp $F$) then there exists a family $(\lambda_w)_{w\in\Val(\KK)}$
  such that
  \begin{conditions}
  \item
    For any $w\in\Val(\KK)$, any $P\in V(\KK_w)$, and any $y\in E_P$,
    we have
    \[\Vert\varphi(y)\Vert'_w\leq\lambda_w\Vert y\Vert_w;\]
  \item
    The set $\{\,w\in\Val(\KK)\mid\lambda_w\neq 1\,\}$ is finite.
  \end{conditions}
\end{prop}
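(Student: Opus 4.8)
The plan is to define, for every place $w$, a constant $\lambda_w$ bounding the fibrewise operator norm of $\varphi$ at $w$, and then to show that $\lambda_w$ can be taken equal to $1$ outside a finite set of places. For $w\in\Val(\KK)$ and $P\in V(\KK_w)$, the morphism $\varphi$ induces a $\KK_w$-linear map $\varphi_P\colon E_P\to F_P$, and we set
\[
N_w(P)=\sup_{y\in E_P\setminus\{0\}}\frac{\Vert\varphi(y)\Vert'_w}{\Vert y\Vert_w}.
\]
Since $E_P$ and $F_P$ are finite-dimensional and the adelic norms restrict on the fibres to genuine norms (at a complex place, to the square of a hermitian norm, which changes nothing, it being still homogeneous of degree one for $|\cdot|_w$), the quantity $N_w(P)$ is finite for each $P$. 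The first step is to check that $\sup_{P\in V(\KK_w)}N_w(P)<\infty$: as $V$ is projective, $V(\KK_w)$ is compact for the $w$-adic topology, and $N_w$ is locally bounded on it. Indeed, in a local trivialisation of $E$ and $F$ near a point $P_0$, the continuity of $\Vert\cdot\Vert_w$ and $\Vert\cdot\Vert'_w$ demanded in the definition of a classical adelic norm, applied on the compact unit spheres of the fibres over $P_0$, shows that the fibre norms over points near $P_0$ are uniformly comparable to those over $P_0$ (here one uses condition (i) to write every nonzero value of $\Vert\cdot\Vert_w$ as $|a|_w$); since $\varphi$ is then given by a continuous matrix-valued function, $N_w$ is bounded near $P_0$, and covering $V(\KK_w)$ by finitely many such neighbourhoods gives the claim. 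We then take $\lambda_w=\max\bigl(1,\sup_{P\in V(\KK_w)}N_w(P)\bigr)$, so that condition (i) holds at $w$ by construction.

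It remains to see that $\lambda_w=1$ for all but finitely many $w$, which also gives condition (ii). Let $S_E$ (resp. $S_F$) and $\mathcal E\to\mathcal V$ (resp. $\mathcal F\to\mathcal V$) be a finite set of places and a model as in condition (iv) of the definition of a classical adelic norm for $E$ (resp. for $F$); after passing to a common model over $\mathcal O_S$ for some finite $S\supseteq S_E\cup S_F$ and enlarging $S$, we may assume in addition that $\mathcal V$ is proper over $\mathcal O_S$ and that $\varphi\colon E\to F$ extends to a morphism $\tilde\varphi\colon\mathcal E\to\mathcal F$ over $\mathcal V$, since $\varphi$ is defined over $\KK$ by a finite amount of data (a standard spreading-out argument). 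Fix $w\notin S$. By the valuative criterion of properness, $V(\KK_w)=\mathcal V(\mathcal O_w)$, so each $P\in V(\KK_w)$ extends to an $\mathcal O_w$-point, and $\tilde\varphi$ maps the $\mathcal O_w$-submodule $\mathcal E_P$ into $\mathcal F_P$. By condition (iv) these submodules are exactly the unit balls of $\Vert\cdot\Vert_w$ on $E_P$ and of $\Vert\cdot\Vert'_w$ on $F_P$, so $\Vert y\Vert_w\leq 1$ forces $\Vert\varphi(y)\Vert'_w\leq 1$. For arbitrary $y\in E_P\setminus\{0\}$, condition (i) gives $\Vert y\Vert_w=|a|_w$ for some $a\in\KK_w^*$; applying the previous implication to $a^{-1}y$ and using $\Vert\lambda z\Vert_w=|\lambda|_w\Vert z\Vert_w$ yields $\Vert\varphi(y)\Vert'_w\leq\Vert y\Vert_w$, that is $N_w(P)\leq 1$. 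Hence $\lambda_w=1$ whenever $w\notin S$, and $\{\,w\in\Val(\KK)\mid\lambda_w\neq 1\,\}\subseteq S$ is finite.

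The argument is thus a routine combination of compactness of $V(\KK_w)$ with spreading-out over a ring of $S$-integers. The one point that needs a little care is the local boundedness of the fibrewise operator norm $N_w$ used in the first step; I would extract it, as indicated, from the continuity built into the definition of a classical adelic norm together with the compactness of the unit spheres in the individual fibres. Everything else — finiteness of $N_w(P)$ for fixed $P$, the identification $V(\KK_w)=\mathcal V(\mathcal O_w)$, and the passage from the unit ball to all of $E_P$ via homogeneity — is immediate.
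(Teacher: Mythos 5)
Your proof is correct and follows essentially the same two-step strategy as the paper: compactness at each place to bound the fibrewise operator norm, and spreading out to a common model over $\mathcal O_S$ so that integral structures force $\lambda_w=1$ outside $S$. The only (cosmetic) difference is in the compactness step, where the paper observes that the ratio $\Vert\varphi(y)\Vert'_w/\Vert y\Vert_w$ is constant on lines and hence descends to a continuous function on the compact space $\PP(E)(\KK_w)$, whereas you patch together local bounds over $V(\KK_w)$ via trivialisations and unit spheres in the fibres.
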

\begin{proof}[Sketch of the proof]
  Let $\PP(E)$ be the projective bundle of the lines in~$E$
  and $E^\times$ be the complement of the zero section
  in $E$. Then for any place~$w$ of~$\KK$,
  we may define a map $f_w:E^\times(\KK_w)\to\RRp$ by
  $f_w(y)=\frac{\Vert\varphi(y)\Vert'_w}{\Vert y\Vert_w}$. This
  map is constant on the lines and induces a continuous map
  $\PP(E)(\KK_w)\to\RRp$. Since the space $\PP(E)(\KK_w)$ is
  compact, this function is bounded from above by a constant $\lambda_w$.
  Moreover for almost all $w\in\Val(\KK)$ the norms on $E$ and $F$
  are defined by model and the morphism $\varphi$ is defined
  over $\mathcal O_w$. For such a place $w$, for any $P\in V(\KK_w)$,
  we get that
  \[\varphi(\{\,y\in E_P\mid\Vert y\Vert_w\leq 1\,\})
  \subset\{\,y\in F_P\mid\Vert y\Vert'_w\leq 1\,\},\]
  therefore we may take $\lambda_w\leq 1$.
\end{proof}
\begin{rema}
  From this lemma, it follows that, if ${(\Vert\cdot\Vert_w)_{w\in\Val(\KK)}}$
  and ${(\Vert\cdot\Vert'_w)_{w\in\Val(\KK)}}$ are norms on a vector bundle,
  then the quotient $\frac{\Vert\cdot\Vert'_w}{\Vert\cdot\Vert'_w}$
  is bounded from above and from below by a strictly positive constant.
  Moreover, by definition the norms are equal for almost all places.
  This implies the existence of a constant $C$ such that,
  for any rational point $P\in V(\KK)$ and
  any subspace~$F$ of $T_PV$,
  \[|\dega(F)-\dega\nolimits'(F)|\leq C.\]
  where $\dega'$ is the degree corresponding to the second norm.
\end{rema}
\begin{coro}\label{coro:changenorm}
  Let $\mu_i$ and $\mu'_i$ be the slopes defined by two different metrics
  on~$V$ and let~$l$ and~$l'$ be the corresponding freeness, then
  \begin{conditions}
  \item
    The difference $|\mu_i-\mu'_i|$ is bounded on $V(\KK)$;
  \item
    There exists $C\in\RRpp$ such that
    \[|l(P)-l'(P)|<\frac C{h(P)}\]
    for any $P\in V(\KK)$ such that $h(P)>0$.
  \end{conditions}
\end{coro}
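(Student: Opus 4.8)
The plan is to deduce both assertions from the estimate recorded in the remark immediately preceding this corollary: there is a constant $C_0\in\RRpp$ such that for every $P\in V(\KK)$ and every subspace $F$ of $T_PV$ one has $|\dega(F)-\dega'(F)|\leq C_0$, where $\dega'$ is the degree attached to the second metric. All the bounds below are then uniform in $P$, precisely because $C_0$ is.

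For (i), I would use that $\mu_i(P)=m_{T_PV}(i)-m_{T_PV}(i-1)$, where for an adelic vector bundle $E$ over $\Spec(\KK)$ the function $m_E$ is the upper boundary of the Newton polygon, $m_E(j)=\max\{\,y\mid(j,y)\in\mathcal P(E)\,\}$. Since $\mathcal P(E)$ is by definition the convex hull of the points $(\dim F,\dega F)$, the value $m_E(j)$ is realised by a convex combination $\sum_l t_l(\dim F_l,\dega F_l)$ (Carath\'eodory). Replacing $\dega$ by $\dega'$ moves each vertex vertically by at most $C_0$, so the same combination shows $m_{E'}(j)\geq m_E(j)-C_0$; by symmetry $|m_E(j)-m_{E'}(j)|\leq C_0$, whence $|\mu_i(P)-\mu'_i(P)|\leq 2C_0$ for all $P$, which is (i).

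For (ii), the starting remarks are that $h(P)=\dega(T_PV)=\sum_k\mu_k(P)$ and likewise $h'(P)=\sum_k\mu'_k(P)$, so taking $F=T_PV$ gives $|h(P)-h'(P)|\leq C_0$, and that minimality of the last slope gives $n\mu_n(P)\leq h(P)$ and $n\mu'_n(P)\leq h'(P)$. Fix $P$ with $h(P)>0$. When $h(P)\leq 2C_0$ there is nothing to do, since $l(P),l'(P)\in[0,1]$ forces $|l(P)-l'(P)|\leq 1<(2C_0+1)/h(P)$; so assume $h(P)>2C_0$, whence $h'(P)\geq h(P)-C_0>h(P)/2>0$. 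Now run a short case analysis on the signs of $\mu_n(P)$ and $\mu'_n(P)$: if both are nonpositive the two freenesses vanish; if one of them, say $\mu'_n(P)$, is nonpositive, then $l'(P)=0$ and $n\mu_n(P)\leq n(\mu_n(P)-\mu'_n(P))\leq 2nC_0$ by (i), giving $|l(P)-l'(P)|\leq 2nC_0/h(P)$ (and symmetrically, using $h'(P)>h(P)/2$, in the opposite sub-case); and if both are positive one uses the identity
\[
 l(P)-l'(P)=\frac{n(\mu_n(P)-\mu'_n(P))}{h(P)}+n\mu'_n(P)\,\frac{h'(P)-h(P)}{h(P)h'(P)},
\]
bounding the first term by $2nC_0/h(P)$ via (i) and the second by $n\mu'_n(P)\cdot C_0/(h(P)h'(P))\leq C_0/h(P)$, the factor $h'(P)$ cancelling against $n\mu'_n(P)\leq h'(P)$. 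Taking a constant $C$ large enough to absorb all cases gives $|l(P)-l'(P)|<C/h(P)$ whenever $h(P)>0$.

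I do not foresee a real obstacle: the substance is entirely contained in the preceding remark, and the rest is bookkeeping. The only two points to keep an eye on are that the comparison constant $C_0$ must be taken uniformly over all rational points $P$ (which it is, being furnished by a compactness-plus-almost-everywhere-agreement argument on each $V(\KK_w)$), and that in the last case of (ii) the positivity of $\mu'_n(P)$ already forces $h'(P)>0$, so dividing by $h(P)h'(P)$ is legitimate and — crucially — the $h'(P)$ cancels, so no Northcott-type finiteness input is needed; the split $h(P)\leq 2C_0$ versus $h(P)>2C_0$ disposes of the remaining small-height points for free.
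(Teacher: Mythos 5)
Your proof is correct and follows exactly the route the paper intends: the corollary is stated as an immediate consequence of the preceding remark's uniform bound $|\dega(F)-\dega'(F)|\leq C_0$, and your convex-hull comparison for (i) plus the case analysis and cancellation $n\mu'_n(P)\leq h'(P)$ for (ii) supply precisely the bookkeeping the paper leaves to the reader.
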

We now wish to describe a strong link between the geometric and
arithmetic settings. Let us first define the freeness in the
geometric setting.
\begin{defi}
  Let $\varphi:\PP^1_\KK\to V$ be a morphism of varieties.
  The pull-back of the tangent bundle
  $\varphi^*(TV)$ is isomorphic to a direct sum
  $\bigoplus_{i=1}^n\mathcal O_{\PP^1_\KK}(a_i)$ with
  $a_1\geq a_2\geq\dots\geq a_n$. The slopes of~$\varphi$ are the
  integers $\mu_i(\varphi)=a_i$. We may consider
  $\deg_{\omega_V^{-1}}(\varphi)=\sum_{i=1}^n\mu_i(\varphi)$
  and the \emph{freeness of~$\varphi$} is defined by
  \[l(\varphi)=
  \begin{cases}
    \frac{na_n}{\deg_{\omega_V^{-1}}(\varphi)}\text{ if $a_n>0$,}\\
    0\text{ otherwise.}
  \end{cases}
  \]
\end{defi}
\begin{rema}
  By construction $l(\varphi)\in[0,1]\cap\QQ$ and $l(\varphi)>0$
  if and only if $\varphi$ is very free.
\end{rema}
\begin{prop}\label{prop:rational}
  Let $\varphi:\PP^1_\KK\to V$ be a non constant
  morphism of varieties and assume
  that~$V$ is equipped with an adelic metric. Then
  \[l(\varphi(P))\longrightarrow l(\varphi)\]
  as $h_{\PP^1_\KK}(P)\to +\infty$.
\end{prop}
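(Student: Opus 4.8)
The plan is to transfer everything to the adelic vector bundle $E=\varphi^*(TV)$ on $\PP^1_\KK$ obtained by pulling back the adelic metric on $V$. For a rational point $P\in\PP^1(\KK)$ the fibre $E_P$ is canonically the adelic vector bundle $T_{\varphi(P)}V$ over $\Spec(\KK)$, so that $\mu_i(\varphi(P))=\mu_i(E_P)$ and $h(\varphi(P))=\dega(E_P)$. Thus the whole statement follows once one proves that
\[\mu_i(E_P)=a_i\,h_{\PP^1_\KK}(P)+O(1)\qquad(1\le i\le n),\]
with the $O(1)$ uniform in $P$. Granting this, note first that since $\varphi$ is non-constant its differential is a nonzero morphism $T_{\PP^1_\KK}=\mathcal O_{\PP^1_\KK}(2)\to E\cong\bigoplus_i\mathcal O_{\PP^1_\KK}(a_i)$, which forces $a_1\ge 2$; hence $\deg_{\omega_V^{-1}}(\varphi)=\sum_i a_i\ge 2$ and, summing the estimate, $h(\varphi(P))=(\sum_i a_i)\,h_{\PP^1_\KK}(P)+O(1)\to+\infty$. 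If $a_n>0$, then $\mu_n(E_P)>0$ for $h_{\PP^1_\KK}(P)$ large, so $l(\varphi(P))=n\mu_n(E_P)/h(\varphi(P))\to n a_n/\sum_i a_i=l(\varphi)$; if $a_n<0$, then $\mu_n(E_P)\to-\infty$ and eventually $l(\varphi(P))=0=l(\varphi)$; and if $a_n=0$, then $\mu_n(E_P)=O(1)$ while $h(\varphi(P))\to+\infty$, so again $l(\varphi(P))\to 0=l(\varphi)$.

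To prove the displayed estimate, fix an isomorphism $E\cong\bigoplus_{i=1}^n\mathcal O_{\PP^1_\KK}(a_i)$, equip each $\mathcal O_{\PP^1_\KK}(a_i)$ with an arbitrary adelic norm, and let $E'$ be the direct sum endowed with the orthogonal-direct-sum adelic norm (the construction recalled in the list of examples following the definition of classical adelic norms). Transporting the pulled-back norm through the isomorphism gives a second adelic norm on $E'$; applying the argument that proves Corollary~\ref{coro:changenorm} to these two adelic norms on $E'$ over the nice variety $\PP^1_\KK$, one gets $|\dega(F)-\dega'(F)|\le C$ for every subspace $F$ of $E'_P$, uniformly in $P$, hence the Newton polygons of $E_P$ and $E'_P$ stay within vertical distance $C$ and $|\mu_i(E_P)-\mu_i(E'_P)|\le 2C$. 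So it suffices to estimate the slopes of $E'_P$, which is now an \emph{orthogonal} direct sum of adelic line bundles over $\Spec(\KK)$ of degrees $d_i(P):=\dega\bigl((\mathcal O_{\PP^1_\KK}(a_i))_P\bigr)=a_i\,h_{\PP^1_\KK}(P)+O(1)$ (the elementary comparison $h_{\mathcal O(a)}=a\,h_{\mathcal O(1)}+O(1)$), with $d_1(P)\ge\cdots\ge d_n(P)$ once $h_{\PP^1_\KK}(P)$ is large.

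For such an orthogonal sum the subbundles $(\mathcal O(a_1)\oplus\cdots\oplus\mathcal O(a_j))_P$ give $m_{E'_P}(j)\ge d_1(P)+\cdots+d_j(P)$, and since the right-hand side is already a concave function of $j$ coinciding with $m_{E'_P}$ at $j=0$ and $j=n$, equality up to $O(1)$ will follow from the reverse inequality
\[\dega(F)\le d_1(P)+\cdots+d_{\dim F}(P)+O(1)\qquad\text{for every subspace }F\subseteq E'_P;\]
indeed, writing a point $(j,m_{E'_P}(j))$ of the Newton polygon as a convex combination of points $(\dim F_l,\dega F_l)$ and using concavity (Jensen) of $j\mapsto d_1(P)+\cdots+d_j(P)$ gives $m_{E'_P}(j)\le d_1(P)+\cdots+d_j(P)+O(1)$, whence $\mu_i(E'_P)=d_i(P)+O(1)=a_i\,h_{\PP^1_\KK}(P)+O(1)$. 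The reverse inequality above is the genuinely new point, and it is precisely a statement about degrees of sub-objects of an orthogonal direct sum: over $\QQ$ it holds with $O(1)=0$ by Cauchy–Binet — express a $\ZZ$-basis of $\Lambda\cap F$ in the orthogonal coordinates, expand the Gram determinant, and bound each nonzero integral minor from below by $1$ — and over a general number field the same computation together with Minkowski's bound on the successive minima yields the inequality up to a constant depending only on $\KK$. This last comparison is the main technical obstacle of the proof, though it is elementary. Combining the three paragraphs gives $\mu_i(\varphi(P))=a_i\,h_{\PP^1_\KK}(P)+O(1)$, and hence the convergence $l(\varphi(P))\to l(\varphi)$ as $h_{\PP^1_\KK}(P)\to+\infty$.
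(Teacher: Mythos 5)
Your proof is correct and follows essentially the same route as the paper: fix an isomorphism $\varphi^*(TV)\cong\bigoplus_i\mathcal O_{\PP^1_\KK}(a_i)$, compare the pulled-back adelic norm with the direct-sum norm via the uniform bound $|\dega(F)-\dega'(F)|\leq C$ of Corollary~\ref{coro:changenorm}, deduce $\mu_i(\varphi(P))=a_i\,h_{\PP^1_\KK}(P)+O(1)$, and split into cases according to the sign of $a_n$. The only difference is that you spell out (via the subbundle lower bound and the Cauchy--Binet/Minkowski upper bound) why the orthogonal direct sum of adelic line bundles has slopes equal to the sorted degrees up to $O(1)$, a step the paper's proof leaves implicit.
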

\begin{proof}
  Let us fix an isomorphism from $\varphi^*(TV)$ to
  a direct sum $\bigoplus_{i=1}^n\mathcal O_{\PP^1_\KK}(a_i)$
  with $a_1\geq a_2\geq\dots\geq a_n$. On $\varphi^*(TV)$ we
  consider the pull-back of the adelic metric on~$V$ and we equip
  the sum $\bigoplus_{i=1}^n\mathcal O_{\PP^1_\KK}(a_i)$ with the direct
  sums of the norms induced by a norm on $\mathcal O_{\PP^1_\KK}(1)$.
  Using the corollary~\ref{coro:changenorm}, we get that
  the differences $|\mu_i(\varphi(P))-a_ih_{\PP^1_\KK}(P)|$ is
  bounded, as well as $|h(\varphi(P))-\sum_{i=1}^na_ih_{\PP^1_\KK}(P)|$.
  If $a_n\geq 0$, then the sum $\sum_{i=1}^na_i$ is strictly positive
  since the morphism is not constant
  and we get
  \[\left|l(\varphi(P))-\frac{a_nn}{\sum_{i=1}^na_i}\right|<
  \frac C{h_{\PP^1_\KK}(P)}.\]
  If $a_n<0$, then we get that $l(\varphi(P))= 0$ except for a finite number
  of $P\in\PP^1_\KK$.
\end{proof}

\Subsection{Explicit computations}
\subsubsection{In the projective space}
Let us compute the freeness for points of the projective space:
\begin{prop}
  Let $P\in\PP^n(\KK)$, then
  \[l(P)=\frac n{n+1}+\min_F\left(\frac{-n\dega(F)}{\codim_E(F)h(P)}\right)\]
  where~$F$ goes over the subspaces $F\subsetneq E$ such that $P\in\PP(F)$.
\end{prop}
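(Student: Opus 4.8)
The plan is to pass through the Euler sequence to put the tangent space at $P$ in a transparent shape and then read off its minimal slope; throughout, $l(P)$ is understood relative to the standard adelic metric on $\PP^n_\KK$, i.e.\ the one deduced from the Euler sequence and the norms of Example~\ref{exam:projective} on $\mathcal O_{\PP^n_\KK}(\pm 1)$. First I would fix notation: let $E=\KK^{n+1}$, regarded as an adelic vector bundle over $\Spec(\KK)$ with the $\mathcal O_\KK$-module $\mathcal O_\KK^{n+1}$, the standard euclidean (\resp hermitian) norm at the real (\resp complex) places and the sup-norm at the finite places, and let $\ell_P\subset E$ be the line corresponding to $P$ with its induced adelic structure, so that $\ell_P=(\mathcal O_{\PP^n_\KK}(-1))_P$. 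The Euler sequence $0\to\mathcal O\to\mathcal O_{\PP^n_\KK}(1)^{n+1}\to T\PP^n_\KK\to 0$, twisted by $\mathcal O_{\PP^n_\KK}(-1)$, yields a canonical isomorphism $T\PP^n_\KK\iso(\mathcal O^{n+1}/\mathcal O(-1))\otimes\mathcal O(1)$; taking fibres at $P$ and checking place by place that this is an isometry for the chosen metrics, one obtains an identification of adelic vector bundles over $\Spec(\KK)$
\[T_P\PP^n_\KK\iso(E/\ell_P)\otimes\ell_P^\vee,\]
where $E/\ell_P$ carries the quotient adelic structure and $\ell_P^\vee$ the dual one.

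Next I would record the degree computations. By Example~\ref{exam:projective}, $\dega(\ell_P)=h_{\mathcal O(-1)}(P)=-h_{\mathcal O(1)}(P)$, hence $\dega(\ell_P^\vee)=h_{\mathcal O(1)}(P)$, and one checks that $\dega(E)=0$ (the degree of the standard adelic structure on $\mathcal O_\KK^{n+1}$ vanishes, by the product formula and the normalisation of the $|\cdot|_w$). Using additivity of $\dega$ on short exact sequences together with $\dega(L\otimes W)=\rk(W)\dega(L)+\dega(W)$ for a line bundle $L$, the identification above gives $\dega(T_P\PP^n_\KK)=(n+1)\dega(\ell_P^\vee)$; since $h(P)=\dega(T_P\PP^n_\KK)=\dega((\omega_{\PP^n_\KK}^{-1})_P)$ is exactly the anticanonical logarithmic height used in the definition of $l(P)$, this reads $\dega(\ell_P^\vee)=h(P)/(n+1)$, and, for any $F$ with $\ell_P\subseteq F\subsetneq E$, $\mu(E/F)=\dega(E/F)/\codim_E(F)=-\dega(F)/\codim_E(F)$.

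The heart of the proof is the computation of $\mu_n(P)=\mu_n(T_P\PP^n_\KK)$, the minimal slope of $T_P\PP^n_\KK$. Tensoring by a line bundle shears the Newton polygon and hence shifts every slope by the degree of that line bundle, so
\[\mu_n(P)=\mu_{\min}\bigl((E/\ell_P)\otimes\ell_P^\vee\bigr)=\dega(\ell_P^\vee)+\mu_{\min}(E/\ell_P).\]
I would then invoke the standard fact that $\mu_{\min}(W)$ is the infimum of $\mu(Q)$ over the nonzero quotients $Q$ of $W$ — every quotient has slope $\ge\mu_{\min}(W)$ (this follows from the convexity of $\mathcal P(W)$, as the segment joining $(\dim F,\dega F)$ to $(\dim W,\dega W)$ lies in $\mathcal P(W)$), and the value is attained on the last step of the Harder--Narasimhan filtration. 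The nonzero quotients of $E/\ell_P$ are exactly the $E/F$ with $\ell_P\subseteq F\subsetneq E$, i.e.\ with $P\in\PP(F)$, so combining with the previous paragraph,
\[\mu_n(P)=\frac{h(P)}{n+1}+\min_{F\subsetneq E,\ P\in\PP(F)}\frac{-\dega(F)}{\codim_E(F)}.\]

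When $\mu_n(P)>0$, dividing by $h(P)$ and multiplying by $n$ turns this into the asserted formula for $l(P)=n\mu_n(P)/h(P)$; when $\mu_n(P)\le 0$ the same display shows that the right-hand side is $\le 0$, consistently with $l(P)=0$. The only genuinely non-formal inputs are the identity $\mu_{\min}(W)=\inf_Q\mu(Q)$ over quotients and the invariance of slopes (up to a shift by $\dega L$) under twisting by a line bundle $L$, both standard in Bost's slope theory; everything downstream is the bookkeeping of the second paragraph. The point I expect to require the most care is the verification that the Euler-sequence isomorphism is an isometry at every place for the standard adelic metric — in particular at the finite places, where it amounts to matching the integral model of $T\PP^n_\KK$ with the one coming from $\mathcal O_w^{n+1}/(\text{tautological})$ — since it is exactly this normalisation that makes $h=h_{\omega^{-1}}$ and thus validates the numerical constants.
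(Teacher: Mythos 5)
Your proof is correct and follows essentially the same route as the paper's: identify $T_P\PP^n_\KK$ with $\ell_P^\vee\otimes(E/\ell_P)$ via the Euler sequence, note $\dega(\ell_P^\vee)=h(P)/(n+1)$, and reduce the minimal slope to a minimum of $-\dega(F)/\codim_E(F)$ over subspaces $F$ with $\ell_P\subset F\subsetneq E$. The only cosmetic difference is that you phrase the last step through quotients of $E/\ell_P$ and the identity $\mu_{\min}=\inf_Q\mu(Q)$, whereas the paper runs the same computation directly on the Newton polygon of subspaces of $T_P\PP^n_\KK$.
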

\begin{proof}[Sketch of the proof]
  Let~$D\subset E$ be the line in~$E$ corresponding to the projective
  point~$P$. There is a canonical isomorphism from the tangent space
  $T_P\PP^n_\KK$ to the quotient $D\dual\otimes E/D\dual\otimes D$
  where~$D\dual$ is the dual of~$D$. This gives a bijection from
  the set of subspaces~$F$ of~$E$ such that $D\subset F\subsetneq E$
  to the strict subspaces of $T_P\PP^n_\KK$ which maps
  the subspace~$F$ to the quotient $D\dual\otimes F/D\dual\otimes D$.
  Since $D\dual\otimes D$ is canonically isomorphic to~$\KK$,
  the arithmetic degree of the subspace of $T_P\PP^n_\KK$ is given by
  \[\dega(D\dual\otimes F/D\dual\otimes D)=
  \dega(D\dual\otimes F)-\dega(\KK)=\dega(F)-\dim(F)\dega(D).\]
  On the other hand, by the description of the tangent space,
  \[h(P)=-(n+1)\dega(D).\]
  We get that the smallest slope is given by
  \[\mu_n(P)=-\dega(D)+\min_F\left(\frac{-\dega F}{\codim_E(F)}\right)\]
  and the freeness by
  \[l(P)=\frac n{n+1}+\min_F
  \left(\frac{-n\dega(F)}{\codim_E(F)h(P)}\right).\qed\]
  \noqed
\end{proof}
\begin{coro}
  For any point $P\in\PP^n(\KK)$, we have
  \[l(P)\geq \frac n{n+1}.\]
\end{coro}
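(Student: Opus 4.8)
The plan is to read the inequality off the formula for $l(P)$ just established, the only genuinely needed input being that every $\KK$-subspace of the standard adelic vector bundle $E=\KK^{n+1}$ has non-positive arithmetic degree. Concretely, for $P\in\PP^n(\KK)$ of positive height the preceding proposition gives
\[l(P)=\frac{n}{n+1}+\min_{D\subseteq F\subsetneq E}\left(\frac{-n\,\dega(F)}{\codim_E(F)\,h(P)}\right),\]
where $D\subset E$ is the line corresponding to $P$ and the minimum runs over the proper subspaces $F$ containing $D$. Since $\codim_E(F)\geq 1$ and $h(P)>0$, I would be done as soon as I know that $\dega(F)\leq 0$ for every subspace $F\subseteq E$.

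To prove that, I would pass to determinants. Writing $d=\dim F$ and $\Lambda_F=F\cap\mathcal O_\KK^{\,n+1}$ for the induced lattice, which is a saturated submodule of $\mathcal O_\KK^{\,n+1}$, one has $\dega(F)=\dega(\exterieur^dF)$, and $\exterieur^dF$ is the line spanned by $v=e_1\wedge\dots\wedge e_d$ for an $\mathcal O_\KK$-basis $(e_i)$ of $\Lambda_F$. By the product formula $\prod_{w\in\Val(\KK)}\Vert v\Vert_w$ does not depend on the chosen generator, so it is enough to check $\prod_w\Vert v\Vert_w\geq 1$. At the non-archimedean places $v$ is a primitive element of the integral lattice $\exterieur^d\mathcal O_\KK^{\,n+1}$ (here saturatedness of $\Lambda_F$ is used), so $\Vert v\Vert_w=1$ there; at the archimedean places $\Vert v\Vert_w$ equals, up to the chosen normalisations, the covolume of $\Lambda_F$ in $F\otimes_\KK\KK_w$, which is $\geq 1$ because the Gram matrix $(\langle e_i,e_j\rangle)$ has entries in $\mathcal O_\KK$ and nonzero determinant. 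Over $\QQ$ this is simply the statement that a saturated sublattice of $\ZZ^{n+1}$ has covolume at least $1$. Hence $\prod_w\Vert v\Vert_w\geq1$, i.e. $\dega(F)=-\sum_w\log\Vert v\Vert_w\leq 0$.

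Given the claim, every term of the minimum equals $-n\,\dega(F)$ divided by the positive number $\codim_E(F)\,h(P)$, hence is $\geq 0$, and $l(P)\geq n/(n+1)$ follows. The substantive step is therefore only the degree estimate $\dega(F)\leq 0$, which is elementary lattice geometry; the single point deserving care is that the displayed formula for $l(P)$, and so the corollary, is meant for points of positive height, the finitely many coordinate points $[0:\dots:1:\dots:0]$ having $h(P)=0$ and $l(P)=0$ and needing to be read as excluded.
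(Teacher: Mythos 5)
Your proof is correct and is exactly the argument the paper leaves implicit: the corollary follows at once from the preceding proposition as soon as one knows that every subspace $F$ of the standard adelic vector bundle $E=\KK^{n+1}$ has $\dega(F)\leq 0$, which is the fact you supply, and your remark that the finitely many points of height zero (where $l(P)=0$ by definition) must be read as excluded is a fair observation about the literal statement. One small caveat on your archimedean step: over a general number field the inequality $\Vert v\Vert_w\geq 1$ need not hold at each infinite place separately, since the Gram determinant is merely a nonzero algebraic integer whose individual archimedean absolute values can be less than $1$; what is true, and what your product-formula framing actually requires, is that the product over all archimedean places is $\geq 1$ (the norm of a nonzero algebraic integer), and similarly $\Lambda_F$ is only projective, not necessarily free, over $\mathcal O_\KK$, so the determinant argument should be run locally or on $\exterieur^d\Lambda_F$ rather than via a global basis.
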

\begin{listrems}
  \remark
  If we take a fixed projective subspace $F$ in $E$,
  then $l(P)$ converges to $\frac n{n+1}$ as $h(P)$ goes to
  $+\infty$ with $P\in F$.
  \remark
  One can show that for any $\eta>0$, there exists a constant $C>0$
  such that, for $B>1$,
  \[\card\{\,P\in\PP^n(\KK)\mid
  H(P)\leq B\text{ and }l(P)<1-\eta\,\}<CB^{1-\eta}.\]
  Since we have an equivalence
  \[\card\{\,P\in\PP^n(\KK)\mid H(P)\leq B\,\}\sim
  C(\PP^n_\KK)B\]
  as~$B$ goes to infinity,
  this means that the number of points~$P$ with a freeness $l(P)<1-\eta$
  is in fact asymptotically negligible. 
\end{listrems}
\subsubsection{Products of lines}
Despite the previous example, the freeness of points can be very small even on
a homogeneous variety. Let us prove that for $(\PP^1_\KK)^n$.
\begin{prop}\label{prop:freenessproduct}
  We equip $(\PP^1_\KK)$ with the product of the adelic metrics and
  denote by $h$ the logarithmic height corresponding to $\PP^1_\KK$.
  then for~$\boldsymbol P=(P_1,\dots,P_n)\in\PP^1(\KK)^n$
  \[l(\boldsymbol P)=\frac{n\,\min_{1\leq i\leq n}(h(P_i))}{\sum_{i=1}^nh(P_i)}.\]
\end{prop}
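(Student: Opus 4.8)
The strategy is to compute the slopes of the tangent space $T_{\boldsymbol P}(\PP^1_\KK)^n$ directly from its description as an orthogonal-type direct sum of the tangent lines $T_{P_i}\PP^1_\KK$. Indeed, since we equip $(\PP^1_\KK)^n$ with the product adelic metric, there is a canonical isometric isomorphism
\[
T_{\boldsymbol P}(\PP^1_\KK)^n\;\longiso\;\bigoplus_{i=1}^n T_{P_i}\PP^1_\KK,
\]
where each summand is a rank-one adelic vector bundle over $\Spec(\KK)$ of arithmetic degree $\dega\bigl(T_{P_i}\PP^1_\KK\bigr)=h(P_i)$ (the anticanonical height on $\PP^1_\KK$, using the previous remarks identifying $\dega(T_PV)$ with $\log H(P)$). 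So the whole problem reduces to computing the Newton polygon, hence the slopes, of an orthogonal direct sum of rank-one adelic bundles with prescribed degrees $d_i=h(P_i)$.

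First I would record the key lemma: for an orthogonal direct sum $L_1\oplus\dots\oplus L_n$ of rank-one adelic line bundles over $\Spec(\KK)$ with $\dega(L_i)=d_i$, the successive slopes are exactly the $d_i$ arranged in decreasing order. The inequality $\mu_i\le d_{(i)}$ (the $i$-th largest among the $d_j$) comes from the fact that any subspace $F$ of dimension $i$ satisfies $\dega(F)\le\sum_{j\in S}d_j$ for some $i$-element index set $S$ — this uses that projecting $F$ onto a coordinate sum of the same dimension does not decrease the degree in this orthogonal situation (one must be slightly careful: the clean statement is that the coordinate subspaces realize the vertices of the Newton polygon). Conversely, taking $F$ to be the span of the coordinate lines $L_j$ for $j$ ranging over the top $i$ values realizes equality, so the upper convex hull $m_E$ passes through the points $(i,\sum_{j\le i}d_{(j)})$, giving $\mu_i(E)=d_{(i)}$. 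In particular the minimal slope is
\[
\mu_n(\boldsymbol P)=\min_{1\le i\le n}h(P_i).
\]

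Then I would plug into the definitions. The logarithmic height of $\boldsymbol P$ for the anticanonical bundle is $h(\boldsymbol P)=\dega(T_{\boldsymbol P}(\PP^1_\KK)^n)=\sum_{i=1}^n h(P_i)$, which is $>0$ since each $h(P_i)$ is bounded below and (after enlarging by a bounded constant if necessary — or simply noting all $P_i$ have positive height for a suitable choice of metric) we may assume it positive; in any case the formula is an equality of the stated shape. If $\min_i h(P_i)>0$ then $\mu_n(\boldsymbol P)>0$ and by definition
\[
l(\boldsymbol P)=n\,\frac{\mu_n(\boldsymbol P)}{h(\boldsymbol P)}
=\frac{n\,\min_{1\le i\le n}h(P_i)}{\sum_{i=1}^n h(P_i)},
\]
which is the claim; if some $h(P_i)\le 0$ then both sides vanish (the right side is $\le 0$, but freeness lies in $[0,1]$, so one interprets the formula as giving $0$ — consistent with $\mu_n\le 0$).

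**Main obstacle.** The only real content is the lemma on slopes of an orthogonal direct sum of line bundles, specifically the upper bound $\dega(F)\le$ (sum of the top $\dim F$ degrees) for an arbitrary subspace $F$. Over $\QQ$ with the Euclidean norm at the archimedean place this is essentially Hadamard's inequality applied to the Gram matrix in the adapted orthogonal basis, combined with the product formula to handle the finite places; over a general number field one argues place by place with the hermitian/Euclidean structures at the archimedean places and the lattice structure at the finite ones. I expect this to be routine but it is where the care goes; everything after it is bookkeeping with the definitions of $\mu_n$, $h$, and $l$.
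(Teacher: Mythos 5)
Your proof follows essentially the same route as the paper: both identify $T_{\boldsymbol P}(\PP^1_\KK)^n$ with the orthogonal direct sum $\bigoplus_i T_{P_i}\PP^1_\KK$ and read off $\mu_i(\boldsymbol P)$ as the $i$-th largest of the $h(P_j)$, the paper simply asserting that the coordinate subspaces maximize $\dega$ in each dimension while you correctly isolate this as the one point needing an argument (which reduces, via $\exterieur^iF\subset\exterieur^iE$ and the product formula, to the rank-one case handled place by place). Your treatment is, if anything, slightly more careful than the paper's, including the degenerate case $\mu_n(\boldsymbol P)\leq 0$ which the paper leaves implicit.
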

\begin{proof}
  The tangent space $T_{\boldsymbol P}(\PP^1_\KK)^n$
  is canonically isomorphic to $\bigoplus T_{P_i}\PP^1_\KK$.
  Let us choose a permutation $\sigma\in\mathfrak S_n$ such that
  \[h(P_{\sigma(1)})\geq h(P_{\sigma(2)})\geq\dots\geq h(P_{\sigma(n)}).\]
  Then we get that $\mu_i(\boldsymbol P)=h(P_{\sigma(i)})$, since
  the the subspace of dimension~$i$ with the biggest arithmetic degree is
  given by $\bigoplus_{j=1}^iT_{P_{\sigma(j)}}\PP^1_\KK$.
\end{proof}
\begin{coro}
  For any $\varepsilon>0$,there exist a constant $C_\varepsilon$ such that
  \[\frac{\card\{\,P\in\PP^1(\KK)^n\mid H(P)\leq B\text{ and }l(P)
    >\varepsilon\,\}}
         {\card\{\,P\in\PP^1(\KK)^n\mid H(P)\leq B\,\}}
         \longrightarrow C_\varepsilon\]
  as $B\to+\infty$. Moreover $1-C_\varepsilon=O(\varepsilon)$.
\end{coro}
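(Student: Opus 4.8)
The strategy is to reduce the count to an equidistribution statement for the ``multiheight direction'' and to read off $C_\varepsilon$ as a volume inside a simplex. By Proposition~\ref{prop:freenessproduct}, writing $h(P_i)$ for the logarithmic height of $P_i\in\PP^1(\KK)$, every tuple $\boldsymbol P=(P_1,\dots,P_n)$ with $\sum_i h(P_i)>0$ satisfies $l(\boldsymbol P)=n\min_i h(P_i)/\sum_i h(P_i)$, whereas the tuples in which some coordinate has height $1$ never satisfy $l(\boldsymbol P)>\varepsilon$ and form a set of size $o\bigl(\card\{\,\boldsymbol P:H(\boldsymbol P)\le B\,\}\bigr)$, so they may be discarded. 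Hence $l(\boldsymbol P)>\varepsilon$ becomes the scale-invariant condition that the point $v(\boldsymbol P):=\bigl(h(P_1),\dots,h(P_n)\bigr)/\sum_i h(P_i)$ of the standard simplex $\Sigma=\{\,v\in\RR_{\ge 0}^n:\sum_i v_i=1\,\}$ lie in the open subset $\Sigma_\varepsilon=\{\,v\in\Sigma:v_i>\varepsilon/n\text{ for all }i\,\}$; moreover $\{\,\boldsymbol P:H(\boldsymbol P)\le B\,\}$ is a region of the form $\{\,\boldsymbol P:\sum_i h(P_i)\le L\,\}$ with $L$ proportional to $\log B$, so $L\to\infty$.

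The heart of the proof is the claim that the directions $v(\boldsymbol P)$ equidistribute for the uniform probability measure $\mu_\Sigma$ on $\Sigma$, i.e.\ that for every $A\subseteq\Sigma$ with $\mu_\Sigma(\partial A)=0$ one has
\[
  \card\{\,\boldsymbol P:\textstyle\sum_i h(P_i)\le L,\ v(\boldsymbol P)\in A\,\}
  \ \sim\ \mu_\Sigma(A)\cdot\card\{\,\boldsymbol P:\textstyle\sum_i h(P_i)\le L\,\}
  \qquad(L\to+\infty).
\]
To establish this I would start from the one-variable asymptotic $N(u):=\card\{\,P\in\PP^1(\KK):h(P)\le u\,\}=\kappa\, e^{\beta u}+o(e^{\beta u})$ for suitable $\kappa,\beta>0$ --- this is Schanuel's theorem, whose case $\KK=\QQ$ is the asymptotic for $\card\PP^1(\QQ)_{H\le B}$ obtained at the end of the proof of Proposition~\ref{prop:distribution.projective} --- and use iterated partial summation to replace each Stieltjes measure $dN$ by the smooth model $\kappa\beta\, e^{\beta u}\,du$ on $[0,+\infty)$. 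Both counts above then reduce to integrals $\int_R e^{\beta\sum_i u_i}\,d\boldsymbol u$ over $R=\{\,\boldsymbol u\in\RR_{\ge 0}^n:\sum_i u_i\le L\,\}$, respectively over its subregion $R_A$ defined by $\boldsymbol u/(\sum_i u_i)\in A$. Rescaling $\boldsymbol u=L\boldsymbol v$ and applying Laplace's method shows that both integrals concentrate near the facet $\sum_i v_i=1$, that the limiting distribution of the angular variable there is exactly $\mu_\Sigma$, and therefore that the quotient of the two tends to $\mu_\Sigma(A)$. Taking $A=\Sigma_\varepsilon$, which is open with $\mu_\Sigma(\partial\Sigma_\varepsilon)=0$, yields the asserted convergence with $C_\varepsilon=\mu_\Sigma(\Sigma_\varepsilon)$.

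It then remains to check $1-C_\varepsilon=O(\varepsilon)$. One has $1-C_\varepsilon=\mu_\Sigma(\Sigma\setminus\Sigma_\varepsilon)\le\sum_{i=1}^n\mu_\Sigma(\{\,v\in\Sigma:v_i\le\varepsilon/n\,\})$, and for $\delta\in(0,1)$ the set $\{\,v\in\Sigma:v_i\ge\delta\,\}$ is the image of $\Sigma$ under the homothety $v\mapsto(1-\delta)v+\delta e_i$, hence has $\mu_\Sigma$-measure $(1-\delta)^{n-1}$; thus $\mu_\Sigma(\{v_i\le\delta\})=1-(1-\delta)^{n-1}$. Summing over $i$ with $\delta=\varepsilon/n$ gives $1-C_\varepsilon\le n\bigl(1-(1-\varepsilon/n)^{n-1}\bigr)=(n-1)\varepsilon+O(\varepsilon^2)=O(\varepsilon)$, which in particular forces $C_\varepsilon\to1$ as $\varepsilon\to0$.

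The main obstacle is the second step: upgrading the scalar estimate $N(u)\sim\kappa e^{\beta u}$ to the $n$-dimensional equidistribution of $v(\boldsymbol P)$ with enough uniformity up to the boundary of $\{\sum_i v_i\le1\}$. Propagating the error term of Schanuel's theorem through the $n$ successive partial summations, controlling the mass in a shrinking neighbourhood of that boundary, and justifying the passage to the Laplace limit are where the real work lies; once the equidistribution is in hand, the reduction via Proposition~\ref{prop:freenessproduct} and the volume computation on the simplex are entirely elementary.
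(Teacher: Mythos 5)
Your argument is correct and follows essentially the same route as the paper's: both reduce the count, via proposition~\ref{prop:freenessproduct}, to the set of points whose height vector lies in the truncated simplex $\{\,\boldsymbol t\in\RRp^n\mid |\boldsymbol t|\leq\min(\log B,\frac n\varepsilon\min_i t_i)\,\}$, and then compare that count with $\int e^{|\boldsymbol t|}\Haar{\boldsymbol t}$ over this region using Landau's one-variable asymptotic for $\PP^1(\KK)$, the only real work being the propagation of the error term, which you correctly flag. Your explicit identification $C_\varepsilon=\mu_\Sigma(\Sigma_\varepsilon)$ together with the homothety computation giving $1-C_\varepsilon\leq(n-1)\varepsilon+O(\varepsilon^2)$ is a slightly more precise packaging of the paper's statement that the polynomial $P_\varepsilon$ has leading coefficient $\frac 1{(n-1)!}+O(\varepsilon)$.
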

\begin{proof}[Sketch of the proof]
  Let us consider the map $\boldsymbol h:\PP^1(\KK)^n\to\RRp^n$
  given by $(P_i)_{1\leq i\leq n}\mapsto(h(P_i))_{1\leq i\leq n}$
  and, for $\boldsymbol t=(t_i)_{1\leq i\leq n}$, write
  $|\boldsymbol t|=\sum_{i=1}^nt_i$.
  The height of point $\boldsymbol P$ in $\PP^1(\KK)^n$
  is given by $h(\boldsymbol P)=|\boldsymbol h(\boldsymbol P)|$.
  By proposition~\ref{prop:freenessproduct}, we only have to estimate
  the cardinal of the set
  \[\left\{\,(P_i)_{1\leq i\leq n}\in\PP^1(\KK)^n\left|
  \sum_{i=1}^nh(P_i)\leq
  \min\left(\log(B),\frac n\varepsilon\min_{1\leq i\leq n}(h(P_i))
  \right)\,\right.\right\}.\]
  Let us introduce the compact simplex $\Delta_\varepsilon(B)$ in $\RRp^n$
  defined by
  \[|\boldsymbol t|\leq
  \min\left(\log(B),\frac n\varepsilon\min_{1\leq i\leq n}(t_i)\right).\]
  Then we may write the above set as
  \[\{\,\boldsymbol P\in\PP^1(\KK)^n\mid\boldsymbol h(\boldsymbol P)\in
  \Delta_\varepsilon(B)\,\}.\]
  Using the estimate of E. Landau~\cite{landau:elementare}
  \[\card\{\,P\in\PP^1(\KK)\mid H(P)\leq B\,\}=
  C(\PP^1_\KK)B+O(B^{1/2}\log(B)),\]
  we get that, for real numbers $\eta,\delta$ with $0<\eta<1$
  and $0<\delta<1/2$ and any $\boldsymbol t=(t_1,\dots,t_n)\in\RRp^n$, we have
  \begin{equation}
    \begin{aligned}
      &\card\left\{\,\boldsymbol P\in\PP^1(\KK)^n\left|
      \boldsymbol h(\boldsymbol P)\in\prod_{i=1}^n[t_i,t_i+\eta]
      \right.\,\right\}\\
      &=C(\PP^1_\KK)^ne^{|\boldsymbol t|}(e^\eta-1)^n
      +O(e^{|\boldsymbol t|-\delta\min_{1\leq i\leq n}(t_i)})\\
      &=C(\PP^1_\KK)^ne^{|\boldsymbol t|}\eta^n + O(e^{|\boldsymbol t|}\eta^{n+1})
      +O(e^{|\boldsymbol t|-\delta\min_{1\leq i\leq n}(t_i)}).
    \end{aligned}
  \end{equation}
  Covering $\Delta_\varepsilon(B)$ with cubes with edges
  of length $\eta$, the number
  of such cubes meeting the boundary of the simplex is bounded by
  $O((\log(B)/\eta)^{n-1})$. Therefore comparing sum and integral, we get
  the following estimate for the cardinal of our set:
  \[C(\PP^1_\KK)^n\int_{\Delta_\varepsilon(B)}e^{|\boldsymbol t|}\Haar{\boldsymbol t}
  +O(B(\log(B))^n\eta)+O\left(\left(\frac{\log(B)}
  \eta\right)^nB^{1-\delta\varepsilon/n}\right).\]
  We may take $\eta=B^{-\delta\epsilon/(2n^2)}$ to have a sufficiently small error
  term. The computation of the integral gives $BP_\varepsilon(\log(B))$
  where $P_\varepsilon$ is a polynomial of degree $n-1$
  and leading coefficient $\frac 1{(n-1)!}+ O(\varepsilon)$.
  To conclude, we note that $C((\PP^1_\KK)^n)=\frac 1{(n-1)!}C(\PP^1_\KK)^n$.
\end{proof}
\begin{listrems}
  \remark
  The proof shows that the number of points with freeness $<\varepsilon$
  is not negligible in this case!
  \remark
  If we consider as in section~\ref{section:allheights} the points~$P$
  in $\PP^1(\KK)^n_{h\in\mathcal D_B}$ where
  $\mathcal D_B=\mathcal D_1+\log(B)u$, with $u=(u_i)_{1\leq i\leq n}$, then
  \[l(P)\longrightarrow\frac{n\min_{1\leq i\leq n}(u_i)}{\sum_{i=1}^nu_i}\]
  as~$B$ goes to infinity. Thus, in this case, the set
  \[\{\,P\in V(\KK)\mid h(P)\in\mathcal D_B,\ l(P)<\varepsilon\,\}\]
  is empty for $B$ big enough.
\end{listrems}
\subsection{Accumulating subsets and freeness}
We are now going to show that the freeness gives valuable information
about points related to accumulating phenomena.
\subsubsection{Rational curves of low degree}\label{subsubsection:lines}
Conjecturally the accumulating subsets on projective
surfaces are rational curves
of low degree. More precisely, the number of points on a rational
curve~$L$ in a nice variety~$V$ for a height given by an adelic metric
is equivalent to $C(L)B^{2/\langle L,\omega_V^{-1}\rangle}$. Therefore
such a curve would be accumulating if $\langle L,\omega_V^{-1}\rangle < 2$
and could be weakly accumulating if $\langle L,\omega_V^{-1}\rangle =2$
and the rank of the Picard group of the variety is~$1$.
On a surface, by the adjunction
formula,
\[-2=\deg(\omega_{L})=\langle L,L\rangle+\langle L,\omega_S\rangle.\]
If the rank of the Picard group $\Pic(V)$ is one,
any effective divisor is ample since $S$ is projective,
in that case $\langle L,L\rangle>0$, hence
$\langle L,\omega_S^{-1}\rangle>2$ which excludes the last case
for a surface.
The remaining cases
are covered by the following proposition.
\begin{prop}
  Let~$V$ be a nice variety on the number field~$\KK$,
  and let~$L$ be a rational curve in~$V$ such that
  $\langle L,\omega_V^{-1}\rangle<2$. Then the set
  \[\{\,P\in L(\KK)\mid l(P)>0\,\}\]
  is finite.
\end{prop}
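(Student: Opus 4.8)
The plan is to reduce the assertion to the geometric statement of proposition~\ref{prop:rational}, applied to the normalisation of $L$. First I would dispose of a degenerate case: if $L$ has no smooth $\KK$-rational point, then $L(\KK)$ is contained in the singular locus of $L$, which is finite, and there is nothing to prove. Otherwise the normalisation of $L$ is a smooth projective geometrically rational curve carrying a $\KK$-point, hence isomorphic to $\PP^1_\KK$, and composing it with the inclusion $L\hookrightarrow V$ gives a non constant morphism $\nu\colon\PP^1_\KK\to V$, birational onto its image $L$, so that $\deg_{\omega_V^{-1}}(\nu)=\langle L,\omega_V^{-1}\rangle$.

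The core of the argument is the splitting type of $\nu^*(TV)\cong\bigoplus_{i=1}^n\mathcal O_{\PP^1_\KK}(a_i)$, with $a_1\geq\dots\geq a_n$ and $\sum_i a_i=\langle L,\omega_V^{-1}\rangle$. The differential of $\nu$, which is nonzero since $\nu$ is non constant and we are in characteristic zero, produces a nonzero morphism $\mathcal O_{\PP^1_\KK}(2)=T_{\PP^1_\KK}\to\nu^*(TV)$; hence $a_1\geq 2$. Combining this with $\sum_i a_i=\langle L,\omega_V^{-1}\rangle<2$ forces $n\geq 2$ and $a_2+\dots+a_n<0$, and since $a_n$ is the smallest of these $n-1$ integers, $a_n<0$. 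This is exactly the step where the hypothesis is used, and it is the crux of the matter: the threshold $2$ is $\deg T_{\PP^1_\KK}$.

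Now I would invoke the proof of proposition~\ref{prop:rational}. Equipping $\bigoplus_i\mathcal O_{\PP^1_\KK}(a_i)$ with the direct sum of the norms induced by a fixed adelic norm on $\mathcal O_{\PP^1_\KK}(1)$ and applying corollary~\ref{coro:changenorm}, the difference $|\mu_n(\nu(Q))-a_n h_{\PP^1_\KK}(Q)|$ is bounded for $Q\in\PP^1(\KK)$. Since $a_n<0$, this forces $\mu_n(\nu(Q))\leq 0$, i.e. $l(\nu(Q))=0$, for every $Q\in\PP^1(\KK)$ outside a set of bounded height, which is finite by Northcott's theorem; thus $\{\,Q\in\PP^1(\KK)\mid l(\nu(Q))>0\,\}$ is finite. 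Finally I would transfer this to $L(\KK)$: every $\KK$-point of $L$ lying in the smooth locus has a unique preimage in $\PP^1_\KK$, which is $\KK$-rational because $\nu$ is an isomorphism there, and the freeness $l$ is attached to the point of $V$ together with its tangent space, hence unchanged by this identification; so $\{\,P\in L(\KK)\mid l(P)>0\,\}$ is contained in the union of the finite singular locus of $L$ with the finite image $\nu(\{\,Q\in\PP^1(\KK)\mid l(\nu(Q))>0\,\})$.

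I expect the main obstacle to be bookkeeping rather than a hard estimate, since all the metric content is already packaged in proposition~\ref{prop:rational}: one must check that $\nu$ is defined over $\KK$ (so that the reduction to $\PP^1_\KK$ is licit once $L$ has a smooth rational point), that passing between $L$ and $\PP^1_\KK$ loses and identifies only finitely many points, and that the nonvanishing of $d\nu$ indeed yields $a_1\geq 2$. None of these is deep, but they are where care is needed.
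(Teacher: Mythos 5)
Your argument is correct and follows the paper's own route: pass to a birational parametrisation $\varphi:\PP^1_\KK\to L$, use the nonzero map $T_{\PP^1_\KK}\to\varphi^*(TV)$ to get $a_1\geq 2$, deduce $a_n<0$ from $\sum_i a_i=\langle L,\omega_V^{-1}\rangle<2$, and conclude via proposition~\ref{prop:rational}. The extra care you take (existence of a smooth rational point so that the normalisation is $\PP^1_\KK$ over $\KK$, and the finitely many singular points) tidies up details the paper leaves implicit, but the substance is the same.
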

\begin{proof}
  Choose a morphism $\varphi:\PP^1_\KK\to L$ which is birational
  and an isomorphism
  $\varphi^*(TS)\iso\bigoplus_{i=1}^n\mathcal O_{\PP^1_\KK}(a_i)$
  with $a_1\geq a_2\geq\dots\geq a_n$. Then $\mu_i(\varphi)=a_i$
  and $\sum_{i=1}^n\mu_i(\varphi)=\langle L,\omega_V^{-1}\rangle<2$.
  We have a natural morphism $T\PP^1_\KK\to\varphi^*(TV)$
  which implies that $a_1\geq 2$ Therefore $a_2<0$ and we may apply
  proposition~\ref{prop:rational}.
\end{proof}
\begin{listrems}
  \remark
  If we consider only the rational points which satisfy the condition
  $l(P)>\varepsilon(B)$ for some decreasing function
  $\varepsilon$ with values in $\RRpp$, then we exclude all points
  of~$L$ outside a finite set.
  \remark
  In dimension $\geq 3$, if $\langle L,\omega_V^{-1}\rangle = 2$,
  then we get that the freeness~$l(P)$ goes to~$0$ on~$L$.
  This applies to the projective lines in cubic volumes
  or complete intersections
  of two quadrics in $\PP^6$.
\end{listrems}

\subsubsection{Fibrations}
We remind the reader that, in the counter-example of Batyrev and Tschinkel,
the accumulating subset is the reunion of fibers of a fibration. We
are now going to explain that the freeness also detects such abnormality.
\begin{prop}
  Let $\varphi: X\to Y$ be a dominant morphism of nice varieties.
  Then there exists a constant~$C$ such that for any $P\in X(\KK)$ such
  that the linear map $T_P\varphi$ is onto,
  \[\mu_{\dim(X)}(P)\leq \mu_{\dim(Y)}(\varphi(P))+C.\]
  If, moreover, the logarithmic height of~$P$ is strictly positive,
  we get the inequality:
  \[l(P)\leq \frac{mh(\varphi(P))}{nh(P)}l(\varphi(P))+
  \frac{mC}{h(P)}\]
  with $m=\dim(X)$ and $n=\dim(Y)$.
\end{prop}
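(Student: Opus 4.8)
The plan is to work with the differential of $\varphi$. The morphism $\varphi$ induces a morphism of vector bundles $d\varphi\colon TX\to\varphi^{*}(TY)$ over~$X$; I give $TX$ the chosen adelic metric on~$X$ and $\varphi^{*}(TY)$ the pull-back of the adelic metric on~$Y$. By the proposition above on morphisms of adelically normed vector bundles, there is a family $(\lambda_{w})_{w\in\Val(\KK)}$ of strictly positive reals, equal to~$1$ outside a finite set of places, such that $\Vert d\varphi(y)\Vert_{w}\leq\lambda_{w}\Vert y\Vert_{w}$ for every place~$w$, every $Q\in X(\KK_{w})$ and every $y\in(TX)_{Q}$. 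Put $\Lambda=\prod_{w\in\Val(\KK)}\lambda_{w}$, a finite positive real. Now fix $P\in X(\KK)$ with $T_{P}\varphi$ onto and write $E=T_{P}X$ and $F=(\varphi^{*}TY)_{P}=T_{\varphi(P)}Y$; since the norm on the pull-back at~$P$ is the norm on $TY$ at $\varphi(P)$, the slopes of~$F$ are the $\mu_{i}(\varphi(P))$. We thus have a surjection $f=T_{P}\varphi\colon E\to F$ of adelic vector bundles over $\Spec(\KK)$ with $\Vert f(y)\Vert_{w}\leq\lambda_{w}\Vert y\Vert_{w}$ for all $w$ and all~$y$. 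Set $m=\dim X=\dim E$ and $n=\dim Y=\dim F$.

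For the first inequality I would use the successive minima $\lambda_{i}$ of Remark~\ref{listrems:slopes}~(ii), together with the estimate $0\leq\log\lambda_{i}(\cdot)+\mu_{i}(\cdot)\leq C_{\KK}$ recalled there. Given $\varepsilon>0$, choose a $\KK$-basis $(x_{1},\dots,x_{m})$ of~$E$ and strictly positive reals $\theta_{w}$, equal to~$1$ for almost all~$w$, with $\prod_{w}\theta_{w}\leq(1+\varepsilon)\lambda_{m}(E)$ and $\Vert x_{j}\Vert_{w}\leq\theta_{w}$ for all $j,w$. As $f$ is onto, $f(x_{1}),\dots,f(x_{m})$ span~$F$, so we may extract a $\KK$-basis $f(x_{j_{1}}),\dots,f(x_{j_{n}})$; it satisfies $\Vert f(x_{j_{k}})\Vert_{w}\leq\lambda_{w}\theta_{w}$ with $\prod_{w}\lambda_{w}\theta_{w}\leq(1+\varepsilon)\Lambda\lambda_{m}(E)$, whence $\lambda_{n}(F)\leq\Lambda\lambda_{m}(E)$ after letting $\varepsilon\to 0$. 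Therefore
\[
\mu_{m}(P)=\mu_{m}(E)\leq-\log\lambda_{m}(E)+C_{\KK}\leq-\log\lambda_{n}(F)+\log\Lambda+C_{\KK}\leq\mu_{n}(\varphi(P))+\log\Lambda+C_{\KK},
\]
and one may take $C=\log\Lambda+C_{\KK}$, enlarged if necessary so that $C>0$; it depends only on $\varphi$ and the metrics, not on~$P$. (Alternatively, choosing from the Harder--Narasimhan filtration a quotient $F\twoheadrightarrow Q$ with $\mu(Q)=\mu_{n}(\varphi(P))$, composing with~$f$, and comparing the two quotient norms on~$Q$ gives, directly from the Newton polygon definition of the slopes, the sharper value $C=\log\Lambda$.)

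For the freeness inequality, assume $h(P)>0$. First, $h(\varphi(P))=\dega(T_{\varphi(P)}Y)=\sum_{i=1}^{n}\mu_{i}(\varphi(P))\geq n\,\mu_{n}(\varphi(P))$; hence if $\mu_{n}(\varphi(P))>0$ then $h(\varphi(P))>0$ and $\frac1n h(\varphi(P))\,l(\varphi(P))=\mu_{n}(\varphi(P))$, while if $\mu_{n}(\varphi(P))\leq 0$ then $l(\varphi(P))=0$ and $\frac1n h(\varphi(P))\,l(\varphi(P))=0\geq\mu_{n}(\varphi(P))$; in all cases $\mu_{n}(\varphi(P))\leq\frac1n h(\varphi(P))\,l(\varphi(P))$, and in particular $h(\varphi(P))\,l(\varphi(P))\geq 0$. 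If $\mu_{m}(P)\leq 0$, then $l(P)=0$ and the asserted bound holds since its right-hand side is a sum of nonnegative terms. If $\mu_{m}(P)>0$, then $l(P)=m\,\mu_{m}(P)/h(P)$ and the first inequality gives
\[
l(P)=\frac{m\,\mu_{m}(P)}{h(P)}\leq\frac{m\bigl(\mu_{n}(\varphi(P))+C\bigr)}{h(P)}\leq\frac{m\,h(\varphi(P))\,l(\varphi(P))}{n\,h(P)}+\frac{mC}{h(P)},
\]
which is the claimed inequality.

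The crux is the first inequality: one needs the defect between $\mu_{\dim X}(P)$ and $\mu_{\dim Y}(\varphi(P))$ to be bounded uniformly in~$P$, and bounded only in terms of the ``operator norm'' of $d\varphi$. That uniformity is exactly what the boundedness proposition above supplies place by place (with finitely many exceptional places); what remains is the elementary linear algebra of pushing a short basis through a surjection and of passing between successive minima and slopes. The second inequality is then pure bookkeeping, the only subtlety being the two case distinctions hidden in the definition of~$l$.
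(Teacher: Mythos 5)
Your proof is correct, but it takes a genuinely different route from the paper's. The paper dualizes: it observes that $T_P\varphi\dual:T_{\varphi(P)}Y\dual\to T_PX\dual$ is injective, applies the standard slope inequality for injective morphisms of adelic vector bundles to bound $\mu_1(T_{\varphi(P)}Y\dual)$ by $\mu_1(T_PX\dual)$ plus a term controlled by the norms of the exterior powers of $T_P\varphi\dual$, and concludes with the duality formula $\mu_1(E\dual)=-\mu_{\dim E}(E)$. You instead work directly with the surjection $T_P\varphi$, push an almost-minimizing basis of $T_PX$ through it to get $\lambda_n(T_{\varphi(P)}Y)\leq\Lambda\,\lambda_m(T_PX)$, and then translate between successive minima and slopes via the Minkowski-type estimate recalled in the paper. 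Your version is more elementary -- it needs only the boundedness proposition for the operator norms and Gaudron's comparison, not Bost's slope inequalities or slope duality -- at the cost of an extra additive $C_\KK$ in the constant, which is harmless here since only the existence of a uniform $C$ matters (and your parenthetical remark correctly identifies how to recover the sharper constant). The uniformity in $P$ is supplied in both arguments by the same place-by-place boundedness proposition, and your derivation of the freeness inequality from the slope inequality, including the case distinctions when $\mu_n(\varphi(P))\leq 0$ or $\mu_m(P)\leq 0$, is the same bookkeeping the paper leaves implicit.
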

\begin{proof}
  The linear map $T_P\varphi$ induces a dual map
  $T_P\varphi\dual:T_{\varphi(P)}Y\dual\to T_PX\dual$
  which is injective. We get an inequality
  \[\mu_1(T_{\varphi(P)}Y\dual)\leq \mu_1(T_PX\dual)
  +\max_{1\leq k\leq\dim(Y)}\left(\frac{\log\left(\vertiii{
      \bigwedge^kT_P\varphi\dual}\right)}{k}\right)\leq \mu_1(T_PX\dual)+C.\]
  We conclude with the duality formula for slopes.
\end{proof}
\begin{coro}
  Let $Q\in Y(\KK)$ be a non critical value of $\varphi$, then
  $l(P)$ converges to $0$ as $h(P)$ goes to $+\infty$ with~$P$
  in the fibre $X_Q(\KK)$.
\end{coro}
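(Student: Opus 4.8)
The plan is to apply the preceding proposition at every point of the fibre $X_Q(\KK)$ and then read off the conclusion from the definition of freeness. Since $Q$ is a non-critical value of $\varphi$, every point $P\in X_Q(\KK)$ is a regular point, so the differential $T_P\varphi$ is onto. Hence the hypotheses of the proposition are satisfied at each such $P$, and with $m=\dim(X)$, $n=\dim(Y)$ we obtain
\[\mu_m(P)\leq \mu_n(\varphi(P))+C=\mu_n(Q)+C.\]
Because $Q$ is fixed, the right-hand side is a constant $C'=\mu_n(Q)+C$ independent of $P$.

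Next I would invoke the definition of freeness. Let $P\in X_Q(\KK)$ with $h(P)>0$. If $\mu_m(P)\leq 0$ then $l(P)=0$ by definition; otherwise $l(P)=m\,\mu_m(P)/h(P)$, and in this case $C'\geq\mu_m(P)>0$, so $l(P)\leq mC'/h(P)$. In either case
\[0\leq l(P)\leq \frac{m\max(C',0)}{h(P)}.\]
For $h(P)$ large enough we have $h(P)>0$, so letting $h(P)\to+\infty$ with $P\in X_Q(\KK)$ the displayed bound forces $l(P)\to 0$, which is the assertion.

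There is essentially no obstacle here: the only points needing care are the sign bookkeeping for $\mu_m(P)$ and the restriction to points of positive height, both dispatched above. Alternatively, one may deduce the statement directly from the second inequality of the proposition, using that $h(\varphi(P))=h(Q)$ and $l(\varphi(P))=l(Q)\leq 1$ are constant on the fibre, which gives $l(P)\leq \bigl(\tfrac{m}{n}h(Q)+mC\bigr)/h(P)\to 0$. \qed
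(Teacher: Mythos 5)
Your proof is correct and is exactly the intended deduction: the paper states this as an immediate corollary of the preceding proposition, and your argument (non-critical value $\Rightarrow$ $T_P\varphi$ onto on the whole fibre, hence $\mu_{\dim X}(P)$ bounded above by the constant $\mu_{\dim Y}(Q)+C$, hence $l(P)=O(1/h(P))$) is the intended one, with the sign bookkeeping handled properly.
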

\begin{rema}
  In particular, this detects bad points in the counter-example
  of Batyrev and Tschinkel. Of course this result applies to $(\PP^1_\KK)^2$
  as well. In fact it is the very property which makes freeness efficient
  to detect bad points in the counter-example of Batyrev and Tschinkel
  which implies that the proportion of rational points in
  $(\PP^1_\KK)^2$ with small freeness is not negligible.
  Section~\ref{section:local}
  will show how the freeness reveals subvarieties which are locally
  accumulating even if they are not globally accumulating.
\end{rema}

\subsection{Combining freeness and heights}
To conclude this part, let us suggest a formula which takes into
account both the freeness and all the heights.
\begin{defi}
  Let $\mathcal D_1$ be a compact
  polyhedron in $\Pic(V)_\RR\dual$ and let $u\in\Ceffdoof{V}$. 
  For any $B>1$ we define $\mathcal D_B=\mathcal D_1+\log(B)u$.
  Let $\varepsilon\in\RRpp$ be small enough, relatively to the
  distance from $u$ to the boundary of $\Ceffdof{V}$.
  Then we define
  \[V(\KK)^{l>\varepsilon}_{\multih\in\mathcal D_B}
  =\{\,P\in V(\KK)\mid
  \multih(P)\in \mathcal D_B,l(P)>\varepsilon\,\}.\]
\end{defi}
Instead of using a constant $\varepsilon$, we could also consider a
slowly decreasing function in $B$ as in~\cite{peyre:liberte}.
With these notations, we can ask our final questions:
\begin{question}\label{question:allheights.freeness}
  We assume that our nice variety~$V$ satisfies the conditions
  of the hypothesis~\ref{hypos:conditions}. Do we have an equivalence
  \begin{equation}
    \label{equ:estimateallheights.freeness}
    \card V(\KK)^{l>\varepsilon}_{h\in\mathcal D_B}\sim \beta(V)\nu(\mathcal D_1)
      \oomega_V(V(\Adeles_\KK)^{\Br})B^{\langle\omega_V^{-1},u\rangle}
  \end{equation}
  as $B$ goes to infinity?
\end{question}
\begin{namedtheorem}[Equidistribution]%
  \label{globalequidistribution:allheights.freeness}
  We shall say that \emph{free points are equidistributed for $\multih$}%
  \index{Global equidistribution>for systems of heights}%
  \index{Equidistribution>Global=(Global ---)}
  if
  the measure $\ddelta_{V(\KK)^{l>\varepsilon}_{\multih\in\mathcal D_B}}$ converges
  weakly to $\mmu_V^{\Br}$ as $B$ goes to infinity.
\end{namedtheorem}
\section{Local accumulation}%
\label{section:local}
The rational points on $\PP^2_\KK$ and $(\PP^1_\KK)^2$
are equidistributed in the sense
of na\"ive equidistribution~\ref{defi:naiveequidistribution}.
But if one looks at figures~\ref{figure.plane} and~\ref{figure.product},
we see lines, which are all projective lines
for the projective plane and the fibres of the two projections
for the product of two projective lines. To interpret these lines,
we need to go beyond the global distribution.
\subsection{Local distribution}
Let us assume that $\KK=\QQ$ to simplify the discussion.
Instead of looking at the proportion of points in a fixed
open subset~$U$ in the adelic space, we may look at the rational
points of bounded height in a open subset $U_B$ depending on $B$
and ask the very broad question
\begin{question}
  For which families $(U_B)_{B>1}$ of open subsets in $V(\Adeles_\QQ)$
  can we hope to have
  \[\frac{\card U_B\cap V(\QQ)_{\multih\in\mathcal D_B}}
         {\card V(\QQ)_{\multih\in\mathcal D_B}}\sim
  \mmu^{\Br}_V(U_B)\]
  as~$B$ goes to infinity?
\end{question}
A particularly interesting case is the distribution around a rational point.
Fix $P_0\in V(\QQ)$
and choose a local diffeomorphism $\rho:W\to W'$, where~$W$ is an open
subset in $V(\RR)$ and $W'$ is an
open subset of $T_{P_0}V_\RR$, which maps $P_0$
to $0$ and such that the differential at $P_0$ is the identity map.
Then we may try to zoom in on the point $P_0$ with some power of~$B$.
More precisely, let us consider the ball
\[\mathcal B(0,R)=\{\,y\in T_{P_0}V_\RR\mid \Vert y\Vert_\infty\leq R\,\}.\]
We may then introduce the probability measure
on $\mathcal B(0,R)$ defined by
\[\delta^\alpha_{R,B}=\frac
1{\card(V(\QQ)_{H\leq B}\cap\rho^{-1}(\mathcal B(O,RB^{-\alpha})))}
\sum_{P\in V(\QQ)_{H\leq B}\cap\rho^{-1}(\mathcal B(0,RB^{-\alpha}))}
\delta_{B^\alpha\rho(P)}.
\]
\begin{listrems}
  \remark Let us assume that $P_0$ belongs to a Zariski open subset of~$V$
  on which the rational points of bounded height are equidistributed in the
  sense of~\ref{defi:relativeequidistribution}. For $\alpha=0$,
  we get the measure induced on $B(0,R)$ by $\rho_*(\mu_\infty)$.
  \remark Under the same hypothesis, if $\alpha$ is small, corresponding
  to a small zoom, we may expect that the points are evenly distributed:
  the measure converges to the probability measure induced by the Lebesgue
  measure.
  \remark If $\alpha$ is big enough, diophantine approximation tells
  us that there are no rational point that near to the rational point $P_0$.
  In other words, for $\alpha$ big enough the above measure is the
  Dirac measure at $P_0$. 
\end{listrems}
We are interested in the critical values of $\alpha$, that is those for which
the asymptotic behaviour of the measure $\delta^\alpha_{R,B}$ changes.
In particular, we can consider the smallest value of~$\alpha$ for which
the measure is not the  Dirac measure at~$P_0$, which is the
biggest of the critical values. This is directly
related to the generalisation of the measures of irrationality
introduced by D.~McKinnon and M.~Roth
in~\cite{mckinnonroth:seshadri}. In our context,
with a height defined by an adelic metric on $V$, the archimedean
metric defines a distance $d_\infty$ on $V(\RR)$.
Then if~$W$ is a constructible subset of~$V$ containing~$P_0$,
we define in this text $\alpha_W(P_0)$ as
\[\inf\left\{\,\alpha\in\RRpp\left|
\forall C\in\RR,\left\{Q\in W(\QQ)\left|
d_\infty(Q,P_0)<\frac C{H(Q)^{\alpha}}\right.\right\}
\text{ is finite }\,\right.\right\}.\]
Since $\rho$ is a diffeomorphism,  
$\alpha_V(P_0)$ corresponds to the biggest critical value.
\begin{rema}
  In this text, we take the inverse of the constant defined
  by D.~McKinnon and M.~Roth in their paper (\loccit),
  since it better expresses the power
  appearing in the zoom factor.
\end{rema}
In \cite{mckinnon:conjecture}, D.~McKinnon suggests that there should exist
rational curves~$L$ in~$V$ such that $\alpha_V(P_0)=\alpha_L(P_0)$.
In other words the best approximations should come from
rational curves.
On the other hand D.~McKinnon and
M.~Roth~\cite[theorem 2.16]{mckinnonroth:seshadri}
give the following formula
for $\alpha_L(P_0)$: let $\varphi:\PP^1_\KK\to L$ be a normalisation
of the curve $L$
\[\alpha_L(P_0)=\max_{Q\in\varphi^{-1}(P_0)}\frac{r_Qm_Q}d\]
where $d=\deg(\varphi^*(\omega_V^{-1}))$, $m_Q$ is the multiplicity of
the branch of~$L$ through~$x$ corresponding to~$Q$ and~$r_Q$
corresponds to the approximation of $Q$ by rational points in $\PP^1_\QQ$
and is given by Roth theorem \cite{roth:approximation}:
\[r_Q=
\begin{cases}
  0\text{ if $\kappa(Q)\not\subset\RR$},\\
  1\text{ if $\kappa(Q)=\QQ$},\\
  2\text{ otherwise.}
\end{cases}
\]
On the other hand, if we take a sequence of rational points
$(Q_n)_{n\in\NN}$ on $L(\QQ)$ which converges to $P_0$
then $(H(Q_n))_{n\in\NN}$ goes to $+\infty$ and therefore,
by proposition~\ref{prop:rational},
we have that $(l(Q_n))_{n\in\NN}$ converges to
$l(\varphi)$. In the case where there exists a branch of degree~$1$
through $P_0$, if the deformations of the morphism~$\varphi$ are contained
in a strict subvariety, this means that all the tangent vectors in $T_{P_0}V$
can not be obtained by a deformation of~$\varphi$ and thus $\varphi$ can
not be very free. Under these assumptions, we get that $l(\varphi)\leq 0$
and therefore $(l(Q_n))_{n\in\NN}$ converges to~$0$.
Therefore, if the locally accumulating subvarieties are dominantly covered
by rational curves, we may expect that the freeness of the points on these
locally accumulating subvarieties
tends to~$0$. 

In \cite{huang:six}, \cite{huang:toric}, and \cite{huang:toricII},
\begin{CJK}{UTF8}{gbsn}黄治中\end{CJK}
studies the local distribution of points
on various toric surfaces, exhibiting phenomena
like local accumulating subvarieties, and locally accumulating
thin subsets.
\section{Another description of the slopes}
\begin{cons}
  For any vector bundle~$E$ of rank~$r$ on~$V$, we may define
  the \emph{frame bundle of~$E$}, denoted by
  $F(E)$, as the $GL_r$-torsor of the
  basis in $E$: for any extension $\LL$ of $\KK$ and any
  point $P\in V(\LL)$, the fibre of $F(E)$ at $P$
  is the set of basis of the fibre $E_P$.
  For a line bundle~$L$, the frame bundle $F(L)$ is
  equal to $L^\times$.
  \par
  Let us now assume that~$E$ is equipped with an adelic norm
  $(\Vert\cdot\Vert_w)_{w\in\Val(\KK)}$. Then for any place $w$,
  any point $P\in V(\KK_w)$ and any basis
  $\boldsymbol e=(e_1,\dots,e_r)\in F(E)_P$
  we get an element~$M_w$ in $\GL_r(\KK_w)/K_w$
  where
  \[K_w=
  \begin{cases}
    \GL_r(\mathcal O_w)\text{ if~$w$ is ultrametric,}\\
    O_r(\RR)\text{ if~$w$ is real,}\\
    U_r(\RR)\text{ if~$w$ is complex.}
  \end{cases}
  \]
  which is the class of the matrix of the coordinates of $(e_1,\dots,e_r)$
  in a basis of the $\mathcal O_w$ lattice (resp. orthonornal basis)
  defined by $\Vert\cdot\Vert_w$ if~$w$ is ultrametric
  (resp. non-archimedean).
  We get a map
  \[F(E)(\Adeles_\KK)\longrightarrow \GL_r(\Adeles_\KK)/K,\]
  where~$K$ is the compact subgroup $\prod_{w\in\Val(\KK)}K_w$.
  Taking the quotient by $\GL_r(\KK)$ for the rational points we get
  a map
  \[V(\KK)\longrightarrow \GL_r(\KK)\backslash \GL_r(\Adeles_\KK)/K.\]
  Let us denote by $Q_r$ the biquotient on the right,
  we get a map
  \[\tau_E:V(\KK)\longrightarrow Q_r.\]
  The determinant composed with product of the norms
  gives a morphism of groups from
  the adelic group
  $\GL_n(\Adeles_\KK)$ to $\RRpp$ which is invariant under the action
  of~$K$ on the right and the action of $\GL_n(\KK)$ on the left,
  this gives a map $|\det|:Q_r\to\RRpp$. The composition
  $|\det|\circ\tau_E$ coincides with the exponential
  height $H_E$ defined by~$E$ with
  its adelic norm.
  \par
  Similarly, since the slopes $\mu_i^E$ are defined in terms of the
  $\mathcal O_\KK$-module defined by the norms at the ultrametric places
  equipped with the non-archimedean norms, we may factorise the slopes
  through $Q_r$, and the freeness of a rational point $P$ may also be computed
  in terms of $\tau_{TV}(P)$.
\end{cons}
\begin{listrems}
  \remark
  In $Q_r$, we may consider the subset $Q^1_r$ of points $P$
  such that $|\det|(P)=1$. The d\'eterminant map
  then defines a map $Q^1_r\to\KK^*\backslash\GG_m(\Adeles_\KK)^1/K_{\GG_m}$
  where $K_{\GG_m}$ is the product over the places~$w$
  of the maximal compact subgroup in
  $\GG_m(\KK_w)$. We get a map $c:Q^1_r\to\Pic(\mathcal O_\KK)$; the
  composition map $c\circ\tau_E$ maps a rational point $P$
  onto the class of the projective $\mathcal O_\KK$-module defined by the
  ultrametric norms in $E_P$.
  As an example, for the projective space $\PP^n_\KK$, with $E=TV$,
  this maps a point
  $P=[y_0:\dots:y_n]$ with integral homogeneous coordinates
  to $(n+1)$ times the class of the ideal $(y_0,\dots,y_n)$.
  \remark
  For surfaces, as described in remark~\ref{rema:slopes.surfaces},
  the slopes, and thus the freeness, measures the deformation of the
  lattice or the proximity to the cusp in the modular curve $X(1)$.
  The above construction generalises this description in higher dimension.
  \unskip\remark
  The frame bundle would enable $\GL_n$ descent on varieties for which
  the lifting to versal torsors is not sufficient. In fact we may extend
  this and consider bundles giving geometric elements in
  the Brauer group. This may provide a method to generalise the
  description of Salberger in the case the geometric Brauer
  group is not trivial.
\end{listrems}
\section{Conclusion and perspectives}
In these notes we made a quick survey of the various directions
to upgrade the principle of Batyrev and Manin to include
the cases of Zariski dense accumulating subsets.
Let me summarize these options:
\begin{enumerate}
\item
  Remove accumulating thin subsets. This method has been successful
  in several cases. However, this notion depends on the ground field
  and we could imagine situations in which there are infinitely many thin
  subsets to remove, similar to the situation of K3-surfaces containing
  infinitely many rational lines which are all accumulating.
\item
  Consider all heights. This method may apply to fibrations
  and other cases in which the accumulating subsets come from
  line bundles. However, as shown by examples of Picard rank one,
  this is not enough to detect accumulating subsets of higher codimension.
\item
  As in \cite{peyre:liberte}, we could use a height defined by an
  adelic metric and the freeness. There are no known counter examples,
  but the freeness condition tends to remove too many points as shown
  by the product of projective lines.
\item
  Combine all heights and freeness. This combination is inspired by the
  geometric analogue.
\end{enumerate}
This list is far from exhaustive. In fact, we could consider the slopes
given by norms on any vector bundle on our variety which gives a profusion
of probably redundant invariants. Arakelov geometry is a very natural
tool to attack this question of redundancy and find if there is a minimal
set of slopes controlling the distribution of points.

The freeness, which is in part
suggested by the analogy with the geometry, is very efficient to
detect local adelic deformations which correspond to local or global
accumulation. However this invariant is particularly difficult
to compute efficiently. Indeed its explicit computation
is related to the finding of non-zero vector of minimal length
in a lattice which is known to be computationally difficult.
At the time of writing, the
following question is still open:
\begin{question}
  Let $V$ be a smooth hypersurface of degree~$d$ in $\PP^N_\QQ$,
  with $d\geq 3$ and $N>(d-1)2^{d}$. Is the cardinal of points
  $x\in V(\QQ)$ with $l(x)<\varepsilon$ and $H(x)<B$ negligible
  as~$B$ goes to infinity?
\end{question}
In other words, the author is still lacking methods giving
lower bounds for the smallest slope, but again we may hope that the
techniques of Arakolov geometry may provide the necessary tools.

\providecommand{\noopsort}[1]{}
\ifx\undefined\bysame
\newcommand{\bysame}{\leavevmode\hbox to3em{\hrulefill}\,}
\fi
\ifx\undefined\numero
\newcommand{\numero}{$\hbox{n}^\circ$}
\fi
\ifx\undefined\andname
\newcommand{\andname}{and }
\fi
\ifx\undefined\comma
\newcommand{\comma}{,}
\fi


\end{document}